\newcommand{\R} {\ensuremath{\mathbb{R}}}
\newcommand{\N} {\ensuremath{\mathbb{N}}}
\newcommand{\C} {\ensuremath{\mathbb{C}}}
\newcommand{\Z} {\ensuremath{\mathbb{Z}}}
\renewcommand{\o}[1]{\overline{#1}}
\newcommand{\dq}{\overline{\partial}}
\newcommand{\wt}[1]{\widetilde{#1}}
\DeclareMathOperator{\Sing}{Sing}
\DeclareMathOperator{\Dom}{Dom}
\DeclareMathOperator{\md}{md}
\DeclareMathOperator{\Def}{Def}
\DeclareMathOperator{\Var}{Var}
\DeclareMathOperator{\id}{id}
\newtheorem {satz} {Satz} [section]
\newtheorem {lem} [satz] {Lemma}
\newtheorem {cor} [satz] {Corollary}
\newtheorem {defn} [satz] {Definition}
\newtheorem {thm} [satz] {Theorem}
\DeclareMathOperator{\dist}{dist}
\DeclareMathOperator{\supp}{supp}
\newcommand{\Ra}{\mathcal{R}}
\renewcommand{\theta}{\vartheta}
\title[$\dq$-Neumann on singular spaces] 
{Compactness of the $\dq$-Neumann operator on singular complex spaces}
\author{J. Ruppenthal}
\address{Department of Mathematics, University of Wuppertal, Gau{\ss}str. 20, 42119 Wuppertal, Germany.}
\email{ruppenthal@uni-wuppertal.de}
\date{\today}
\subjclass[2000]{32W05, 32C99, 58J05}
\keywords{Green/Neumann operator, $L^2$-theory, singular complex spaces.}
\begin{document}

\begin{abstract} 
Let $X$ be a Hermitian complex space of pure dimension $n$.
We show that the $\dq$-Neumann operator on $(p,q)$-forms 
is compact at isolated singularities of $X$ if $p+q\neq n-1, n$ and $q\geq 1$.
The main step is the construction of compact solution operators
for the $\dq$-equation on such spaces which is based on a general
characterization of compactness in function spaces on singular spaces,
and that leads also to a criterion for compactness of more general
Green operators on singular spaces.
\end{abstract}

\maketitle

\section{Introduction}

The Cauchy-Riemann operator $\dq$ and the related $\dq$-Neumann operator play a central role in complex analysis.
Especially the $L^2$-theory for these operators is of particular importance and has become indispensable for the subject
after the fundamental work of
H\"ormander on $L^2$-estimates and existence theorems for the $\dq$-operator (see \cite{Hoe1} and \cite{Hoe2})
and the related work of Andreotti and Vesentini (see \cite{AnVe}).
By no means less important is Kohn's solution of the $\dq$-Neumann problem (see \cite{Ko1}, \cite{Ko2} and also \cite{KoNi}),
which implies existence and regularity results for the $\dq$-complex, as well (see Chapter III.1 in \cite{FK}).
Important applications of the $L^2$-theory are for instance the Ohsawa-Takegoshi extension theorem \cite{OT},
Siu's analyticity of the level sets of Lelong numbers \cite{Siu0}
or the invariance of plurigenera \cite{Siu}.

Whereas the theory of the $\dq$-operator and the $\dq$-Neumann operator is very well developed on
complex manifolds, not much is known about the situation on singular complex spaces which
appear naturally as the zero sets of holomorphic functions.
The further development of this theory is an important task
since analytic methods have led to fundamental advances in geometry on complex manifolds (see Siu's results mentioned above),
but these analytic tools are still missing on singular spaces.

After a first period of intensive research on the $L^2$-theory for the $\dq$-operator on singular spaces
(see \cite{Oh2}, \cite{P}, \cite{Hk}, \cite{N2}, \cite{PS1}, \cite{FoHa}),
there has been good progress in this subject recently due to
Pardon and Stern (see \cite{PS2}), Diederich, Forn{\ae}ss, {\O}vrelid and Vassiliadou
(see \cite{Fo}, \cite{DFV}, \cite{FOV2}, \cite{OV1}, \cite{OV2}),
Ruppenthal and Zeron (see \cite{Rp7}, \cite{Rp8}, \cite{RZ1}, \cite{RZ2}).
On the other hand, the $\dq$-Neumann operator has not been studied on singular complex spaces yet.
The purpose of the present paper is to initiate this branch of research in complex analysis
on singular complex spaces.

Let $X$ be a Hermitian complex space\footnote{A reduced complex space with a Hermitian metric on the regular part
which is induced by local embeddings into complex number space, hence extends continuously into the singular set.}
of pure dimension $n$ with isolated singularities only.
Our intention is to study the behavior of the $\dq$-Neumann operator in the presence of these singularities.

Let $\Omega\subset\subset X$ be a relatively compact domain, and assume that either $X$ is compact and $\Omega=X$,
or that $X$ is Stein and $\Omega$ has smooth strongly pseudoconvex boundary that does not contain singularities.
Let $\Omega^*=\Omega - \Sing X$ and $\dq_w$ the $\dq$-operator in the sense of distributions.\footnote{The $\dq$-operator
in the sense of distributions is the maximal closed $L^2$-extension of the $\dq$-operator.
The notation $\dq_w$ refers to this as a weak extension. We will also use the notation $\dq_s$ for the minimal (strong)
closed $L^2$-extension of the $\dq_{cpt}$-operator on smooth forms with compact support (see section \ref{sec:ds}).}
Then there are only finitely many obstructions to solvability of the $\dq_w$-equation in the $L^2$-sense
on $\Omega^*$ for forms of degree $(p,q)$ with $p+q\neq n$, $q\geq 1$.\footnote{
If $\Omega=X$ is compact, we also have to assume that $q=1$ in case $p+q=n+1$.
We keep this assumption throughout the text without mentioning it explicitly.}
This can be deduced from $L^2$-regularity results for the $\dq_w$-equation at isolated singularities due to
Forn{\ae}ss, {\O}vrelid and Vassiliadou \cite{FOV2} by use of Hironaka's resolution of singularities
(Theorem \ref{thm:fov2}).

Hence the $\dq$-operator in the sense of distributions
\begin{eqnarray*}
\dq_w: L^2_{p,q-1}(\Omega^*) \rightarrow L^2_{p,q}(\Omega^*)
\end{eqnarray*}
has closed range $\Ra(\dq_w)$ in $L^2_{p,q}(\Omega^*)$ for $p+q\neq n$.
So, the densely defined closed self-adjoint $\dq_w$-Laplacian
$$\Box=\dq_w\dq_w^* + \dq_w^* \dq_w$$
has closed range in $L^2_{p,q}(\Omega^*)$ for $p+q\neq n-1,n$,
and we obtain the orthogonal decomposition
$$L^2_{p,q}(\Omega^*) = \ker \Box_{p,q} \oplus \Ra(\Box_{p,q}).$$
Then the $\dq_w$-Neumann operator 
$$N_{p,q} = \Box_{p,q}^{-1}: L^2_{p,q}(\Omega^*) \rightarrow \Dom(\Box_{p,q}) \subset L^2_{p,q}(\Omega^*)$$
is well-defined by the following assignment: let $N_{p,q}u=0$ if $u\in \ker\Box_{p,q}$, and
$N_{p,q}u$ the uniquely defined preimage of $u$ orthogonal to $\ker \Box_{p,q}$ if $u\in\Ra(\Box_{p,q})$.
The main result of the present paper is compactness of this operator $N_{p,q}$:

\begin{thm}\label{thm:neumanncpt1}
Let $X$ be a Hermitian complex space of pure dimension $n$ with only isolated singularities,
and $\Omega\subset\subset X$ a relatively compact domain
such that either $\Omega=X$ is compact, or $X$ is Stein and
$\Omega$ has smooth strongly pseudoconvex boundary
that does not contain singularities.

Let $p+q\neq n-1, n$ and $q\geq 1$. If $\Omega=X$ is compact and $p+q=n+1$, let $q=1$.
Then the $\dq$-operator in the sense of distributions $\dq_w$
has closed range in $L^2_{p,q}(\Omega^*)$ and $L^2_{p,q+1}(\Omega^*)$
so that the $\dq_w$-Neumann operator
$$N_{p,q} = \Box_{p,q}^{-1}=(\dq_w\dq_w^*+\dq_w^*\dq_w)_{p,q}^{-1}: L^2_{p,q}(\Omega^*) \rightarrow \Dom\Box_{p,q}\subset L^2_{p,q}(\Omega^*)$$
is well-defined as above. $N_{p,q}$ is compact.
\end{thm}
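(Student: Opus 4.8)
The plan is to deduce compactness of $N_{p,q}$ from the existence of \emph{compact} solution operators for the $\dq_w$-equation on $\Omega^*$, and to produce the latter by transferring the problem to a resolution of singularities and then applying a Rellich-type compactness criterion tailored to the singular space.

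First I would record the functional-analytic reduction. Since $p+q\neq n-1,n$ (and $p+q\neq n+1$ unless $q=1$), the discussion in the introduction --- based on the $L^2$-regularity of Forn{\ae}ss--{\O}vrelid--Vassiliadou, Theorem~\ref{thm:fov2}, together with Hironaka's resolution --- already yields that $\dq_w$ has closed range on $(p,q)$- and $(p,q+1)$-forms, so that $\Box_{p,q}$ has closed range and $N_{p,q}=\Box_{p,q}^{-1}$ is well defined together with the orthogonal decomposition. For $u\in L^2_{p,q}(\Omega^*)$ one has $\Box_{p,q}N_{p,q}u=u-H_{p,q}u$, where $H_{p,q}$ is the projection onto the finite-dimensional harmonic space; since $N_{p,q}u\perp\ker\Box_{p,q}$ and $N_{p,q}u\in\Dom\Box_{p,q}\subset\Dom\dq_w\cap\Dom\dq_w^*$, integration by parts within the domain gives the identity
$$\langle u,N_{p,q}u\rangle=\langle\Box_{p,q}N_{p,q}u,N_{p,q}u\rangle=\|\dq_w N_{p,q}u\|^2+\|\dq_w^*N_{p,q}u\|^2.$$
Hence $N_{p,q}^{1/2}$, and therefore $N_{p,q}=N_{p,q}^{1/2}\cdot N_{p,q}^{1/2}$, is compact as soon as the operators $\dq_wN_{p,q}$ and $\dq_w^*N_{p,q}$ are compact. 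Splitting $L^2_{p,q}(\Omega^*)$ into harmonic forms, $\o{\Ra(\dq_w)}$ and $\o{\Ra(\dq_w^*)}$, one checks in the standard way that $\dq_w^*N_{p,q}$ restricted to $\o{\Ra(\dq_w)}$ is the $L^2$-minimal solution operator for $\dq_wv=u$, and that $\dq_wN_{p,q}$ restricted to $\o{\Ra(\dq_w^*)}$ is the $L^2$-minimal solution operator for $\dq_w^*$, which by Hodge-star duality amounts again to solving $\dq_w$ in a complementary bidegree. Thus everything reduces to producing \emph{compact} solution operators for the $\dq_w$-equation on $\Omega^*$ in the bidegrees permitted by the hypotheses.

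The heart of the matter is this construction near a singular point. Fix $x\in\Sing X$ and a resolution $\pi\colon M\to X$, biholomorphic over $X\setminus\Sing X$, and localize to a small neighborhood of $x$. Pulling the $\dq$-equation back to the manifold $M$ and working with a Hermitian metric on $M$ that dominates $\pi^*$(metric on $X$), the $L^2$-$\dq$-theory of Forn{\ae}ss--{\O}vrelid--Vassiliadou (Theorem~\ref{thm:fov2}) provides, in the relevant degrees, solution operators with $L^2$-bounds modulo the finite-dimensional obstruction spaces; pushing the solution back down gives a bounded solution operator for $\dq_w$ near $x$. To upgrade boundedness to \emph{compactness} one invokes the general characterization of compactness for operators between $L^2$-spaces on the singular space (the Rellich-type criterion announced in the abstract): the solution operator is compact because its image gains a derivative upstairs on $M$ and the distortion of the $L^2$-norms under $\pi$ near the exceptional set can be controlled quantitatively. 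Checking that this gain genuinely survives the comparison of the singular metric with its pull-back is where the hypothesis $p+q\neq n-1,n$ enters in an essential way, and this is the step I expect to be the main obstacle.

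It then remains to globalize. On any smoothly bounded strongly pseudoconvex piece of $\Omega^*$ away from $\Sing X$ --- in particular near $\partial\Omega$ in the Stein case --- Kohn's solution of the $\dq$-Neumann problem and H\"ormander's estimates furnish compact (indeed subelliptic) solution operators. Taking a finite partition of unity subordinate to a cover of $\o\Omega$ by small neighborhoods of the singular points and by relatively compact subsets of $\Omega^*$, one patches the local solution operators in the usual way; the error terms coming from differentiating the cut-off functions are supported in the regular region and are absorbed, after composition with solution operators one degree higher, as compact operators, while a downward recursion on $q$ removes them. This produces a global compact solution operator for $\dq_w$ on $\Omega^*$ in each bidegree required above, and feeding it back into the reduction of the second paragraph shows that $\dq_wN_{p,q}$ and $\dq_w^*N_{p,q}$, hence $N_{p,q}$, are compact.
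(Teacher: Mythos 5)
Your overall architecture coincides with the paper's: closed range comes from the Forn{\ae}ss--{\O}vrelid--Vassiliadou regularity results (Theorem \ref{thm:fov2}), compactness of $N_{p,q}$ is reduced to compactness of solution operators for $\dq_w$ in degrees $(p,q)$ and $(p,q+1)$ (your reduction via $N_{p,q}^{1/2}$ and the canonical solution operators is a standard variant of the Hefer--Lieb identity $N_{p,q}=K_q^*K_q+K_{q+1}K_{q+1}^*$ that the paper invokes; note only that $\dq_wN_{p,q}$ on $\Ra(\dq_w^*)$ is handled most cleanly as the adjoint of the canonical solution operator in degree $(p,q+1)$, since the Hodge star turns $\dq_w^*$ into the \emph{minimal} extension $\dq_s$, not $\dq_w$), and compactness of those solution operators is to come from the Riesz-type criterion together with Kohn's estimates on a resolution. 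But the step you yourself flag as ``the main obstacle'' is precisely where your argument has a genuine gap, and the mechanism you gesture at is not the correct one. There is no derivative gain ``upstairs on $M$'' near the exceptional set. What Theorem \ref{thm:fov2} actually supplies is a \emph{weight} gain: the solution satisfies $\int|u|^2d_A^{-2}\log^{-4}(1+d_A^{-1})\,dV_X\lesssim\|f\|^2$ for $p+q<n$, resp.\ $\int|u|^2d_A^{-2a}\,dV_X\lesssim\|f\|^2$ for $p+q>n$. Because the weight blows up at the singular set, this yields exactly condition (ii) of Theorem \ref{thm:precompact} for the family $\{\chi Sf\}$ (no $L^2$-mass concentrates at $A$), while condition (i) comes from interior ellipticity alone (the G\r{a}rding inequality, Lemma \ref{lem:i}) and has nothing to do with behaviour at the exceptional set. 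This is the content of Theorem \ref{thm:sop}; without identifying the weight gain as the source of compactness near $A$, your proof does not close.

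Two further points. Your globalization by patching local solution operators with cut-offs and a downward recursion on $q$ is both unnecessary and riskier than what is needed: the FOV results are already semi-global on $\Omega$ (Proposition 5.8 and Theorem 5.9 of \cite{FOV2}, resp.\ the desingularization argument in the compact case), so one has a single global solution operator $S$ on a closed subspace of finite codimension from the outset; only the \emph{proof of compactness} of $S$ is localized, by writing $Sf=\chi Sf+(1-\chi)Sf$ and treating the singular region as above and the boundary region via Kohn's $1/2$-estimate on the strongly pseudoconvex domain $\Omega'=\pi^{-1}(\Omega)\subset M$ followed by the Rellich embedding. Finally, for $q=1$ the boundary piece $(1-\chi)S$ takes values in $(p,0)$-forms, where the subelliptic estimate controls the solution only modulo the Bergman projection; one must replace $S$ by $S-P_0$ (still a solution operator, since $P_0$ maps into $\ker\dq_w$) to obtain compactness. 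Your proposal does not account for this case, which does occur under the hypotheses of the theorem.
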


We remark that this also implies compactness of the $\dq_s$-Neumann operator $N_{n-p,n-q}^s$
in degree $(n-p,n-q)$ under the same assumptions (see section \ref{sec:ds}).

\vspace{3mm}
Compactness of the $\dq$-Neumann operator is a classical topic of complex analysis on manifolds.
A classical approach 
(e.g. on compact manifolds or on strongly pseudoconvex domains in complex manifolds)
is to deduce compactness by the Rellich embedding theorem
from subelliptic estimates for the complex Laplacian (see \cite{S} for a recent comprehensive discussion of
the $\dq$-Neumann problem).

We choose a different approach to prove Theorem \ref{thm:neumanncpt1}.
It follows from the work of Forn{\ae}ss, {\O}vrelid and Vassiliadou \cite{FOV2} that there are solution
operators for the $\dq_w$-equation that have some gain of regularity at the isolated singularities
(see Theorem \ref{thm:fov2}). By use of a Riesz characterization theorem for precompactness on arbitrary
Hermitian manifolds (Theorem \ref{thm:precompact}), we deduce that these operators are actually
compact solution operators:

\begin{thm}\label{thm:cptsolution}
Let $X$ be a Hermitian complex space of pure dimension $n$ with only isolated singularities,
and $\Omega\subset\subset X$ a relatively compact domain
such that either $\Omega=X$ is compact, or $X$ is Stein and
$\Omega$ has smooth strongly pseudoconvex boundary
that does not contain singularities.

Let $p+q\neq n$ and $q\geq 1$. If $\Omega=X$ is compact and $p+q=n+1$, let $q=1$.
Then the range $\mathcal{R}(\dq_w)_{p,q}$ of the $\dq$-operator in the sense of distributions
$$\dq_w: L^2_{p,q-1}(\Omega^*) \rightarrow (\ker\dq_w)_{p,q} \subset L^2_{p,q}(\Omega^*)$$
has finite codimension in $(\ker\dq_w)_{p,q}$, and there exists a compact operator
$$S: \mathcal{R}(\dq_w)_{p,q} \rightarrow L^2_{p,q-1}(\Omega^*)$$
such that $\dq_w Sf=f$.
\end{thm}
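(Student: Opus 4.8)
The plan is to obtain first a \emph{bounded} linear solution operator for $\dq_w$ by combining the regularity theory of Forn\ae ss, {\O}vrelid and Vassiliadou near the isolated singularities with the classical $L^2$-theory on the regular part, and then to promote it to a \emph{compact} one by checking the hypotheses of the Riesz-type precompactness criterion, Theorem~\ref{thm:precompact}. The assertion that $\Ra(\dq_w)_{p,q}$ is closed and has finite codimension in $(\ker\dq_w)_{p,q}$ is already the content of Theorem~\ref{thm:fov2}: pulling the weighted $L^2$-$\dq$-complex on $\Omega^*$ back to a resolution $\pi\colon M\to X$ of the singularities (Hironaka) identifies the obstructions with a finite-dimensional cohomology group, since $M$ is compact in the first case and contains the relatively compact strongly pseudoconvex domain $\pi^{-1}(\Omega)$ in the second. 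It therefore remains to produce a compact solution operator $S$.

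By Theorem~\ref{thm:fov2} there is a continuous linear operator $S\colon\Ra(\dq_w)_{p,q}\to L^2_{p,q-1}(\Omega^*)$ with $\dq_w Sf=f$ whose image enjoys, beyond the uniform $L^2$-bound, the following uniform estimates: an interior bound $\|Sf\|_{W^1(K')}\le C_{K'}\|f\|$ for every $K'\subset\subset\Reg\Omega$; in the Stein case a subelliptic bound $\|Sf\|_{W^{1/2}}\le C\|f\|$ up to the strongly pseudoconvex part of $b\Omega$; and a \emph{gain at $\Sing X$}, i.e.\ $\bigl\|\dist(\cdot,\Sing X)^{-\delta}Sf\bigr\|_{L^2(U)}\le C\|f\|$ on a neighborhood $U$ of $\Sing\Omega$ for some $\delta>0$ (equivalently a fractional Sobolev gain in resolved coordinates). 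If one prefers to assemble $S$ by hand, one solves $\dq_w$ near $\Sing X$ by \cite{FOV2}, multiplies the local solution by a cutoff $\chi$ with $\chi\equiv1$ near $\Sing\Omega$ and $\supp\chi$ in a small neighborhood of $\Sing\Omega$, and solves the $\dq_w$-closed remainder $f-\dq_w(\chi\,\cdot\,)$ --- supported in a relatively compact subset of $\Reg X$ with tame geometry --- by the standard $\dq$-Neumann/H\"ormander machinery; the interior and boundary estimates are then the classical ones and the gain is inherited from the local solution near $\Sing X$.

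To see that $S$ is compact it suffices to show that $\mathcal{F}:=\{Sf:\ f\in\Ra(\dq_w)_{p,q},\ \|f\|_{L^2}\le1\}$ is relatively compact in $L^2_{p,q-1}(\Omega^*)$, and for this I verify the two hypotheses of Theorem~\ref{thm:precompact} for the Hermitian manifold $\Omega^*=\Reg\Omega$. For uniform tightness: the gain estimate yields $\|Sf\|_{L^2(B_r(\Sing X))}\le C\,r^{\delta}\|f\|$, which is $<\varepsilon$ for $r$ small, uniformly in $f$; and in the Stein case the uniform $W^{1/2}$-estimate together with the collar structure of $b\Omega$ makes the $L^2$-mass of every element of $\mathcal{F}$ on an $r$-collar of $b\Omega$ uniformly small; hence there is a compact $K\subset\Omega^*$ with $\sup_{\mathcal{F}}\|\,\cdot\,\|_{L^2(\Omega^*\setminus K)}<\varepsilon$. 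The second hypothesis --- uniformly vanishing oscillation under the averaging operator of Theorem~\ref{thm:precompact} --- then only has to be checked on a fixed neighborhood of $K$, where the uniform interior $W^1$- and boundary $W^{1/2}$-bounds give relative compactness by Rellich, hence uniformly vanishing oscillation; combined with tightness and the locality of the averaging operator this extends to uniformly vanishing oscillation on all of $\Omega^*$ by the standard Riesz decomposition argument. Theorem~\ref{thm:precompact} then shows that $\mathcal{F}$ is relatively compact, i.e.\ $S$ is compact.

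The delicate point is the tightness at the isolated singularities: one must extract from \cite{FOV2}, via the Hironaka resolution and the attendant weighted $L^2$-estimates, a gain of \emph{compact-embedding type} (a strictly positive power of $\dist(\cdot,\Sing X)$, equivalently a fractional Sobolev gain in resolved coordinates) rather than a merely qualitative improvement, since it is exactly this quantitative gain that forces the $L^2$-mass of $Sf$ near $\Sing X$ to tend to $0$ uniformly in $f$ --- and this is precisely what is available in the range $p+q\neq n$, $q\ge1$ (with $q=1$ if $\Omega=X$ and $p+q=n+1$). The rest --- the bookkeeping near the strongly pseudoconvex boundary in the Stein case and, if $S$ is built by patching, solving the error equation on the overlap with estimates --- is routine.
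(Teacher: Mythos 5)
Your overall strategy is the same as the paper's: finite codimension from Theorem~\ref{thm:fov2} (via the resolution of singularities), a cutoff decomposition of $Sf$ into a piece near $\Sing X$ and a piece near $b\Omega$, tightness at the singularities from the weighted estimates \eqref{eq:est1}/\eqref{eq:est2}, the deformation condition of Theorem~\ref{thm:precompact} from interior ellipticity (this is Lemma~\ref{lem:i}), and Kohn's subelliptic $1/2$-estimate plus Rellich at the strongly pseudoconvex boundary. But there is one genuine gap: your claimed uniform bound $\|Sf\|_{W^{1/2}}\le C\|f\|$ up to $b\Omega$ is \emph{false} for $q=1$, because then $Sf$ is a $(p,0)$-form and Kohn's basic estimate on a strongly pseudoconvex domain is subelliptic only in degrees $q\ge 1$; in degree $0$ it holds only modulo the infinite-dimensional space of holomorphic forms (the harmonic space), so no uniform $W^{1/2}$ bound and no Rellich compactness is available for the boundary piece. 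The theorem as stated includes $q=1$ (and indeed requires $q=1$ in the compact case with $p+q=n+1$). The paper repairs exactly this point in Theorem~\ref{thm:sop} by replacing $S$ with $S-P_0$, where $P_0$ is built from the Bergman projection $\Pi_0$ on the resolution; since $\dq_w P_0=0$, the modified operator is still a solution operator, and $\pi^*v-\Pi_0(\pi^*v)$ does satisfy the uniform $W^{1/2}$ bound. Without some such modification your argument fails for $q=1$ in the Stein case.

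Two smaller points. First, Theorem~\ref{thm:fov2} produces only weighted $L^2$ estimates, not the interior $W^1$ bounds you attribute to it; to run Lemma~\ref{lem:i} one needs control of $\dq^*Sf$ as well as of $\dq Sf=f$, which the paper gets by taking $S$ to be the canonical solution in the weighted space $L^2_{p,q-1}(\Omega^*,\varphi)$, forcing $T_1^*\circ S=0$ as in \eqref{eq:T1S}. Your hand-assembled $S$ (local solution times cutoff plus a solution of the remainder) does not obviously have this property, so you should either take canonical solutions throughout or argue separately that each piece has locally bounded $W^1$ norm. Second, the estimates of Theorem~\ref{thm:fov2} hold only on a closed subspace $H$ of finite codimension in $\ker\dq_w$; to get the operator on all of $\Ra(\dq_w)_{p,q}$ one extends by choosing arbitrary preimages on a finite-dimensional complement, a finite-rank and hence harmless perturbation, but this step should be stated.
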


That requires also Kohn's subelliptic estimates and Hironaka's resolution of singularities
which we use to distinguish between the treatment of the isolated singularities on the one hand
and the strongly pseudoconvex boundary of the domain $\Omega$ on the other hand.
Compactness of the $\dq_w$-Neumann operator (i.e. Theorem \ref{thm:neumanncpt1})
follows then by an argument of Hefer and Lieb
since $N_{p,q}$ can be expressed in terms of the compact solution operators (see \cite{HL}).

As a byproduct, we also obtain the following characterization of compactness of the $\dq$-Neumann
operator on singular spaces with arbitrary singularities
(in the spirit of some recent work of Gansberger \cite{G} and Haslinger \cite{H} about compactness
of the $\dq$-Neumann operator on domains in $\C^n$):

\begin{thm}\label{thm:compact3}
Let $Z$ be a Hermitian complex space of pure dimension $n$, $X\subset Z$ an open Hermitian submanifold
and $\dq$ a closed $L^2$-extension of the $\dq_{cpt}$-operator on smooth forms with compact support in $X$,
for example $\dq=\dq_w$ the $\dq$-operator in the sense of distributions.
Let $0\leq p,q \leq n$.

Assume that $\dq$ has closed range in $L^2_{p,q}(X)$ and in $L^2_{p,q+1}(X)$.
Then
$$\Box_{p,q} = \dq_{p,q} \dq^*_{p,q+1} + \dq^*_{p,q}\dq_{p,q-1}$$
has closed range and the following conditions are equivalent:

(i) The $\dq$-Neumann operator $N_{p,q}=\Box^{-1}_{p,q}$ is compact.

(ii) For all $\epsilon>0$, there exists $\Omega\subset\subset X$ such that $\|u\|_{L^2_{p,q}(X-\Omega,\varphi)}<\epsilon$
for all
$$u\in \{u\in \Dom(\dq)\cap\Dom(\dq^*)\cap \Ra(\Box_{p,q}): \|\dq u\|^2_{L^2} + \|\dq^*u\|^2_{L^2}<1\}.$$

(iii) There exists  a smooth function $\psi\in C^\infty(X,\R)$, $\psi>0$, such that $\psi(z)\rightarrow \infty$ as $z\rightarrow bX$,
and
\begin{eqnarray*}
( \Box_{p,q} u,u)_{L^2} \geq \int_X \psi |u|^2 dV_X\ \ \mbox{ for all } u\in\Dom(\Box_{p,q})\cap\Ra(\Box_{p,q}).
\end{eqnarray*}
\end{thm}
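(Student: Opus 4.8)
The plan is to prove Theorem \ref{thm:compact3} by a fairly standard abstract functional-analytic argument, adapted from the manifold setting (as in the work of Haslinger and Gansberger) to the singular situation, where the key point is that $X$ itself is a genuine Hermitian manifold (the regular part) and the singularities of $Z$ simply manifest as ends of $X$ near which $\psi$ must blow up. First I would record the preliminary reduction: once $\dq$ has closed range in $L^2_{p,q}(X)$ and $L^2_{p,q+1}(X)$, the standard Hodge-type decomposition argument shows $\Box_{p,q}$ has closed range and $L^2_{p,q}(X)=\ker\Box_{p,q}\oplus\Ra(\Box_{p,q})$, so that $N_{p,q}$ is well-defined; this is exactly as in the introduction's discussion. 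It then suffices to work on the Hilbert space $H:=\Ra(\Box_{p,q})$ and analyze when $N_{p,q}|_H$ is compact.

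For the implications, I would argue (i)$\Leftrightarrow$(ii) first. Compactness of $N_{p,q}$ is equivalent to compactness of the embedding $\iota:\Dom(Q)\hookrightarrow H$, where $Q(u,u):=\|\dq u\|^2+\|\dq^* u\|^2$ is the energy form with form domain $\Dom(\dq)\cap\Dom(\dq^*)\cap H$ equipped with the graph norm; this is the usual equivalence between compactness of the inverse of a positive self-adjoint operator with compact resolvent and compactness of the form-domain embedding, using that $(\Box u,u)=Q(u,u)$. Now apply the Riesz-type precompactness criterion, Theorem \ref{thm:precompact}: a bounded subset $B$ of the unit ball of $(\Dom(Q),Q)$ is precompact in $L^2_{p,q}(X)$ if and only if (a) for every $\epsilon>0$ there is $\Omega\subset\subset X$ with $\|u\|_{L^2(X\setminus\Omega)}<\epsilon$ for all $u\in B$ (mass concentration), and (b) a local equicontinuity/interior-regularity condition that is automatic here because on the relatively compact $\Omega$ the complex Laplacian is elliptic with the standard $W^1$ a priori estimate $\|u\|_{W^1(\Omega')}\lesssim \|\Box u\|+\|u\|$ on smooth interior subsets, giving local precompactness for free by Rellich. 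Hence precompactness of the unit ball of $\Dom(Q)$ reduces exactly to condition (ii); this gives (i)$\Leftrightarrow$(ii).

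Next, (iii)$\Rightarrow$(ii) is immediate: if $(\Box u,u)\geq\int_X\psi|u|^2$ with $\psi\to\infty$ at $bX$, then for any $\epsilon>0$ choose $\Omega\subset\subset X$ so large that $\psi>1/\epsilon^2$ on $X\setminus\Omega$; then for $u$ in the set in (ii), $\|u\|^2_{L^2(X\setminus\Omega)}\leq\epsilon^2\int_{X\setminus\Omega}\psi|u|^2\leq\epsilon^2(\Box u,u)<\epsilon^2$. For the converse (ii)$\Rightarrow$(iii), I would exhaust $X$ by $\Omega_1\subset\subset\Omega_2\subset\subset\cdots$ with $\bigcup\Omega_j=X$, use (ii) to pick for each $j$ a compact $K_j$ with $\|u\|_{L^2(X\setminus K_j)}<2^{-j}$ for all $u$ with $Q(u,u)<1$, i.e. (by scaling) $\|u\|^2_{L^2(X\setminus K_j)}\leq 4^{-j}(\Box u,u)$ for all $u\in\Dom(\Box)\cap\Ra(\Box)$; then build $\psi=\sum_j a_j\chi_j$ from cutoff functions $\chi_j$ supported off $K_j$ with suitable growing weights $a_j$, and smooth the result so that $\psi\in C^\infty(X,\R)$, $\psi>0$, $\psi\to\infty$ at $bX$, and $\int_X\psi|u|^2\leq(\Box u,u)$; a diagonal/telescoping estimate makes the sum converge. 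The one delicate point — and the main obstacle — is the equivalence (i)$\Leftrightarrow$(ii): one must invoke Theorem \ref{thm:precompact} correctly and verify that its hypotheses hold, in particular that the local precompactness condition is genuinely automatic on the manifold $X$ (the singularities of $Z$ do not meet any $\Omega\subset\subset X$, so no singular-point analysis enters here), and that the approximation/density issues needed to pass between the form domain and $\Dom(\Box)$ in statements (ii) and (iii) are handled — specifically that smooth forms or forms in $\Dom(\Box)$ are dense in the graph norm of $Q$ in the relevant subspace, which is needed to identify the two sets appearing in (ii) and (iii) up to closure.
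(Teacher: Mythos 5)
Your proposal is correct and follows essentially the same route as the paper: the paper deduces Theorem \ref{thm:compact3} from the general Green-operator criterion (Theorem \ref{thm:compact}), whose proof is exactly your scheme --- the representation $N_{p,q}=j\circ j^*$ reducing (i) to compactness of the form-domain embedding, the Riesz-type criterion of Theorem \ref{thm:precompact} with the interior deformation condition supplied by the G\aa rding inequality (Lemma \ref{lem:i}), and a dyadic exhaustion to build $\psi$ for (ii)$\Rightarrow$(iii). One small correction: since the set in (ii) is bounded only in the form norm, the interior estimate you need is the first-order G\aa rding inequality $\|u\|_{W^{1,2}(\Omega)}\lesssim\|u\|+\|\dq u\|+\|\dq^* u\|$ (applied to cutoffs $\chi u$, as in Lemma \ref{lem:i}), not $\|u\|_{W^1}\lesssim\|\Box u\|+\|u\|$, which would require $u\in\Dom(\Box)$ with bounded $\|\Box u\|$.
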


The present paper is organized as follows.
In section \ref{sec:precompact}, we give a criterion for $L^2$-precompactness of bounded sets of differential forms 
on arbitrary Hermitian manifolds
in the spirit of the classical Riesz characterization (Theorem \ref{thm:precompact}).

This criterion is used to study compactness of general Green operators on singular spaces
with arbitrary singularities (Theorem \ref{thm:compact}) in section \ref{sec:compact}.
Theorem \ref{thm:compact3}
is an easy corollary from Theorem \ref{thm:compact} in the special case of the $\dq_w$-Neumann operator.

In section \ref{sec:sop}, we use the results of Forn{\ae}ss, {\O}vrelid and Vassiliadou to construct
compact solution operators for the $\dq_w$-equation which are then used to show compactness of the $\dq_w$-Neumann
operator by the method of Hefer and Lieb in section \ref{sec:cpt}.
Note that the proof of Theorem \ref{thm:cptsolution} is contained in the proof of Theorem \ref{thm:neumanncpt}.
Finally, we study the $\dq_s$-Neumann operator in section \ref{sec:ds}.

\vspace{7mm}
{\bf Acknowledgements.} 
The author thanks Klaus Gansberger and Dariush Ehsani
for very helpful discussions on the topic.

\newpage
\section{Precompactness on Hermitian manifolds}\label{sec:precompact}

Let $X$ be a Hermitian manifold. If $f$ is a differential form on $X$,
we denote by $|f|$ its pointwise norm. For a weight function $\varphi\in C^0(X)$,
we denote by $L^2_{p,q}(X,\varphi)$ the Hilbert space of $(p,q)$-forms such that
$$\|f\|_{L^2_{p,q}(X,\varphi)}^2:=\int_X |f|^2 e^{-\varphi} dV_X < \infty.$$
Note that we may take different weight functions for forms of different degree.
We assume that $X$ is connected. For two points $p,q\in X$, let $\dist_X(p,q)$
be the infimum of the length of curves connecting $p$ and $q$ in $X$.
Let $\Phi: X \rightarrow X$ be a diffeomorphism. Then we call
$$\md(\Phi) := \sup_{p\in X} \dist_X(p,\Phi(p))$$
the mapping distance of $\Phi$.
If $Y$ is another Hermitian manifold and
$\Phi: X\rightarrow Y$ differentiable, the pointwise norm of the tangential map $\Phi_*$
is defined by
$$|\Phi_*|(p) := \sup_{\substack{v\in T_pX\\ |v|=1}} |\Phi_*(v)|_{T_{\Phi(p)}Y}.$$
This leads to the sup-norm of $\Phi_*$:
$$\|\Phi_*\|_\infty :=\sup_{p\in X} |\Phi_*|(p).$$
We also need to measure how far $\Phi_*: TX \rightarrow TX$ is from the identity mapping on tangential vectors (if $\Phi: X\rightarrow X$).
As the total space $TX$ inherits the structure of a Hermitian manifold,
$\dist_{TX}$ is also well defined, and we set
\begin{eqnarray*}
\|\Phi_* - \id\|_\infty &=& \sup_{p\in X} \sup_{\substack{v\in T_pX\\ |v|=1}} \dist_{TX}(\Phi_* v,v).
\end{eqnarray*}

\begin{defn}\label{defn:Def}
Let $\Omega\subset X$ open.
We call a diffeomorphism 
$\Phi: (\Omega,X) \rightarrow (\Omega,X)$
a $\delta$-variation of $\Omega$ in $X$ if $\Phi|_{X-\Omega}$ is the identity map,
mapping distance $\md(\Phi)<\delta$ and $\|\Phi_*-\id\|_\infty , \|(\Phi^{-1})_*-\id\|_\infty < 3\delta$.
The set of all $\delta$-variations of $\Omega$ in $X$ will be denoted by
$\Var_\delta(\Omega,X)$.

A $\delta$-variation $\Phi\in \Var_\delta(\Omega,X)$ will be called $\delta$-deformation,
if it can be connected by a smooth path to the identity map in $\Var_\delta(\Omega,X)$,
i.e. if there exists a smooth map
$$\Phi_\cdot(\cdot): [0,1]\times X \rightarrow X,\ (t,x)\mapsto \Phi_t(x)\in X,$$
such that $\Phi_t(\cdot)\in \Var_\delta(\Omega,X)$ for all $t\in[0,1]$, $\Phi_0=\id$, $\Phi_1=\Phi$ and
\begin{eqnarray}\label{eq:dt}
\left|\frac{\partial}{\partial t} \Phi_t(x)\right| \leq 3\delta\ \ \mbox{ for all } t\in[0,1], x\in X.
\end{eqnarray}
The set of all $\delta$-deformations of $\Omega$ in $X$ will be denoted by
$$\Def_\delta(\Omega,X).$$
\end{defn}

\newpage
Note that $\|\Phi_*-\id\|_\infty, \|(\Phi^{-1})_* -\id\|_\infty < 3\delta$ implies that
\begin{eqnarray}\label{eq:3delta}
\|\Phi_*\|_\infty, \|(\Phi^{-1})_*\|_\infty < 1+3\delta.
\end{eqnarray}
A remark on condition \eqref{eq:dt} is in order:
if $\Phi$ is a $\delta$-deformation and $x\in X$, then $\Phi_\cdot(x): [0,1]\rightarrow X$
is a path connecting $x$ and $\Phi(x)$.
Since $\dist_X(x,\Phi(x))<\delta$, condition \eqref{eq:dt} means that the path $\Phi_\cdot(x)$ is not too far away from a geodesic
(of uniform velocity) connecting the two points if $\dist_X(x,\Phi(x))$ comes close to $\delta$.
Another useful observation is the following:

\begin{lem}\label{lem:trafo}
Let $M_1$ and $M_2$ be Hermitian manifolds,
$U_1\subset M_1$ and $U_2 \subset M_2$ open sets, and $\Gamma: U_1 \rightarrow U_2$ a diffeomorphism.
Let $\Omega_1 \subset\subset U_1$ be an open subset of $U_1$ and $\Omega_2=\Gamma(\Omega_1)\subset\subset U_2$.
For $\Phi\in \Def_\delta(\Omega_2, M_2)$, we define the pull-back as
$$\Gamma^\# \Phi = \Gamma^{-1} \circ \Phi\circ\Gamma: (\Omega_1, M_1) \rightarrow (\Omega_1, M_1).$$
Let $C_\Gamma := \max\{1,\|\Gamma_*\|_{\infty,\o{\Omega_1}}^2, \|\Gamma^{-1}_*\|_{\infty,\o{\Omega_2}}^2\}$. Then
$$\Gamma^\#\Phi \in \Def_{C_\Gamma \delta}(\Omega_1, M_1)$$
for all $\delta>0$ and all $\Phi\in \Def_\delta(\Omega_2, M_2)$.
\end{lem}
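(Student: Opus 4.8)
The plan is to verify, one by one, the three defining conditions of a $C_\Gamma\delta$-deformation for $\Psi := \Gamma^\#\Phi = \Gamma^{-1}\circ\Phi\circ\Gamma$, using throughout that $\Gamma$ is bi-Lipschitz on the relevant compact closures with Lipschitz constants controlled by $\|\Gamma_*\|_{\infty,\o{\Omega_1}}$ and $\|\Gamma^{-1}_*\|_{\infty,\o{\Omega_2}}$. First I would observe that $\Psi$ is a diffeomorphism of $M_1$ (extended by the identity outside $\Omega_1$, which is consistent since $\Phi$ is the identity outside $\Omega_2=\Gamma(\Omega_1)$, so $\Psi$ is the identity on $M_1-\Omega_1$). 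For the mapping distance: for $x\in\Omega_1$, the path $t\mapsto\Gamma^{-1}(\gamma(t))$, where $\gamma$ is a short curve from $\Gamma(x)$ to $\Phi(\Gamma(x))$ in $\Omega_2$, connects $x$ to $\Psi(x)$; its length is at most $\|\Gamma^{-1}_*\|_{\infty,\o{\Omega_2}}$ times the length of $\gamma$, hence $\dist_{M_1}(x,\Psi(x))\le \|\Gamma^{-1}_*\|_{\infty,\o{\Omega_2}}\,\md(\Phi) < \|\Gamma^{-1}_*\|_{\infty,\o{\Omega_2}}\,\delta \le C_\Gamma\delta$. (Outside $\Omega_1$ the distance is zero.) So $\md(\Psi)<C_\Gamma\delta$.

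Next I would handle the tangential estimate $\|\Psi_*-\id\|_\infty<3C_\Gamma\delta$. Here the chain rule gives $\Psi_* = (\Gamma^{-1})_*\circ\Phi_*\circ\Gamma_*$, and I want to compare this with $\id = (\Gamma^{-1})_*\circ\id\circ\Gamma_*$. The clean way is: $\dist_{TM_1}(\Psi_*v,v) = \dist_{TM_1}\big((\Gamma^{-1})_*\Phi_*\Gamma_* v,\ (\Gamma^{-1})_*\Gamma_* v\big)$, and since $(\Gamma^{-1})_*: TU_2\to TU_1$ is itself a diffeomorphism of total spaces whose sup-norm on $T\o{\Omega_2}$ is bounded (it scales both base displacement and fiber displacement), this is at most a constant — bounded by $\max\{\|\Gamma^{-1}_*\|_{\infty,\o{\Omega_2}},\|\Gamma^{-1}_*\|_{\infty,\o{\Omega_2}}^2\}$ or so, in any case $\le C_\Gamma$ after one also absorbs the norm of $\Gamma_*$ needed to renormalize $\Gamma_*v$ to unit length — times $\dist_{TM_2}(\Phi_*(\Gamma_* v),\Gamma_* v)$. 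Writing $w=\Gamma_* v/|\Gamma_* v|$, we have $\dist_{TM_2}(\Phi_* w,w)<3\delta$ by $\Phi\in\Def_\delta(\Omega_2,M_2)$, and $|\Gamma_* v|\le\|\Gamma_*\|_{\infty,\o{\Omega_1}}$; combining the Lipschitz constants of $(\Gamma^{-1})_*$ and the rescaling factor yields the bound $3C_\Gamma\delta$ with $C_\Gamma$ as defined (the square in the definition of $C_\Gamma$ is exactly what absorbs both one factor of $\|\Gamma_*\|_\infty$ from rescaling and one from the distance distortion of $(\Gamma^{-1})_*$ on $TM_2$). The estimate for $\|(\Psi^{-1})_*-\id\|_\infty$ is identical with the roles of $\Gamma$ and $\Gamma^{-1}$, respectively $\Phi$ and $\Phi^{-1}$, interchanged, and $C_\Gamma$ is symmetric in $\|\Gamma_*\|_\infty$ and $\|\Gamma^{-1}_*\|_\infty$, so the same constant works. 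Thus $\Psi\in\Var_{C_\Gamma\delta}(\Omega_1,M_1)$.

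Finally, for the deformation property: let $\Phi_t$ be a smooth path in $\Var_\delta(\Omega_2,M_2)$ from $\id$ to $\Phi$ with $|\partial_t\Phi_t(y)|\le 3\delta$. I set $\Psi_t := \Gamma^{-1}\circ\Phi_t\circ\Gamma$. This is visibly smooth in $(t,x)$, satisfies $\Psi_0=\id$, $\Psi_1=\Psi$, and $\Psi_t\in\Var_{C_\Gamma\delta}(\Omega_1,M_1)$ for every $t$ by exactly the argument of the previous paragraph applied to $\Phi_t$. It remains to bound $|\partial_t\Psi_t(x)|$. Since $\Gamma$ does not depend on $t$, $\partial_t\Psi_t(x) = (\Gamma^{-1})_*\big(\partial_t\Phi_t(\Gamma(x))\big)$ (differentiate $\Gamma^{-1}\circ\Phi_t\circ\Gamma$ in $t$; only the middle factor moves, and its $t$-derivative at the point $\Phi_t(\Gamma(x))$ is pushed forward by $(\Gamma^{-1})_*$ at that point), so $|\partial_t\Psi_t(x)|\le\|\Gamma^{-1}_*\|_{\infty,\o{\Omega_2}}\cdot 3\delta\le 3C_\Gamma\delta$, as required. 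Hence $\Psi\in\Def_{C_\Gamma\delta}(\Omega_1,M_1)$.

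I expect the only genuinely delicate point to be the second step: making precise the claim that $(\Gamma^{-1})_*$, viewed as a map of tangent bundles with their induced Hermitian metrics, distorts the distance $\dist_{TM}$ by a factor controlled by $\|\Gamma^{-1}_*\|_\infty$ \emph{squared} (one power for the base coordinates, one for the fiber), together with the bookkeeping of the rescaling $v\mapsto\Gamma_* v/|\Gamma_* v|$ needed to feed unit vectors into the hypothesis on $\Phi$. This is where the specific shape of the constant $C_\Gamma=\max\{1,\|\Gamma_*\|_{\infty,\o{\Omega_1}}^2,\|\Gamma^{-1}_*\|_{\infty,\o{\Omega_2}}^2\}$ comes from, and one must check that the factor of $3$ in the definition of $\delta$-variation is not eroded in the process — it survives precisely because the hypothesis $\|\Phi_*-\id\|_\infty<3\delta$ already carries that factor and we only multiply by $C_\Gamma\ge 1$. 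Everything else is a routine application of the chain rule and the fact that Lipschitz/operator-norm bounds on a differentiable map on a compact set control lengths of image curves.
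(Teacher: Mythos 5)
Your proposal is correct and follows essentially the same route as the paper, which verifies the same three conditions and compresses the tangential estimate into the single chain-rule identity $(\Gamma^\#\Phi)_*-\id=\Gamma^{-1}_*\circ(\Phi_*-\id)\circ\Gamma_*$ together with $\|\Gamma^{-1}_*\|\,\|\Gamma_*\|\le C_\Gamma$. Your more explicit bookkeeping of how $(\Gamma^{-1})_*$ distorts $\dist_{TM}$ on the total space and of the rescaling of $\Gamma_*v$ to unit length is a finer-grained account of the same step, not a different argument.
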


\begin{proof}
First off all, $\Gamma^{-1}\circ\Phi\circ\Gamma$ is only defined on $U_1$,
but as it is the identity mapping on $U_1-\Omega_1$,
$\Gamma^\#\Phi$ is well-defined as a map $M_1\rightarrow M_1$ if we extend it
as the identity mapping to $M_1-U_1$.

Let $\Phi\in \Var_\delta(\Omega_2,M_2)$.
So $\md(\Gamma^\#\Phi) < C_\Gamma\delta$ and
\begin{eqnarray*}
\|(\Gamma^\#\Phi)_* -\id\|_\infty = \| \Gamma^{-1}_* \circ ( \Phi_* -\id) \circ \Gamma_*\|_\infty \leq C_\Gamma 3 \delta.
\end{eqnarray*}
Moreover, if $\Phi\in\Def_\delta(\Omega_2, M_2)$, then clearly $(\Gamma^\# \Phi)_t = \Gamma^\# \Phi_t$
connects $\Gamma^\#\Phi$ to the identity mapping and $|\frac{\partial}{\partial t} (\Gamma^\# \Phi)_t| \leq C_\Gamma 3\delta$.
\end{proof}

Note that with the same constant $C_\Gamma>0$ also
$$(\Gamma^{-1})^\#: \Def_\delta(\Omega_1, M_1) \rightarrow \Def_{C_\Gamma\delta}(\Omega_2, M_2).$$

Using $\delta$-deformations on $X$, we can characterize precompact sets in the spaces of
square-integrable differential forms in the spirit of the classical Riesz
characterization (see e.g. \cite{Alt}, 2.15).
Before, we need some preliminary considerations:

\begin{lem}\label{lem:density}
The space of smooth forms with compact support $C^\infty_{(p,q),cpt}(X)$ is  dense in $L^2_{p,q}(X,\varphi)$.
\end{lem}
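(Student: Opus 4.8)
The plan is to prove the density of $C^\infty_{(p,q),cpt}(X)$ in $L^2_{p,q}(X,\varphi)$ in two stages: first reduce to forms that are already square-integrable with respect to the Lebesgue-type measure but not necessarily compactly supported or smooth, then handle the two approximations (truncation to compact support and mollification) separately. Since the weight $e^{-\varphi}$ is continuous and hence locally bounded above and below away from zero, approximation in $L^2_{p,q}(X,\varphi)$ on a fixed compact set is equivalent to approximation in the unweighted $L^2$ on that set; the weight therefore plays no essential role and only needs to be controlled through the truncation step.

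First I would show that compactly supported (but merely $L^2$) forms are dense: given $f\in L^2_{p,q}(X,\varphi)$, exhaust $X$ by relatively compact open sets $\Omega_1\subset\subset\Omega_2\subset\subset\cdots$ with $\bigcup_k\Omega_k=X$, pick cutoff functions $\chi_k\in C^\infty_{cpt}(X)$ with $0\le\chi_k\le 1$ and $\chi_k\equiv 1$ on $\Omega_k$, and observe that $\chi_k f\to f$ in $L^2_{p,q}(X,\varphi)$ by dominated convergence, since $|\chi_k f - f|^2 e^{-\varphi}\le |f|^2 e^{-\varphi}\in L^1(X)$ and the integrand tends to $0$ pointwise a.e. So it suffices to approximate a form $g$ with $\supp g\subset\subset X$ by elements of $C^\infty_{(p,q),cpt}(X)$, and here the weight is bounded on $\supp g$, so it is enough to do this in the unweighted $L^2$ on a fixed relatively compact neighborhood $K$ of $\supp g$.

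Next I would reduce this to a local statement using a partition of unity: cover $\supp g$ by finitely many coordinate charts $U_1,\dots,U_N$ (each biholomorphic to an open subset of $\C^n$, on which the Hermitian metric has smooth coefficients bounded above and below), take a subordinate smooth partition of unity $\rho_1,\dots,\rho_N$, and write $g=\sum_j \rho_j g$ with $\supp(\rho_j g)\subset\subset U_j$. On each chart, $\rho_j g$ is an $L^2$ $(p,q)$-form with compact support whose coefficients (in the coordinate coframe) are $L^2$ functions with compact support in an open subset of $\C^n$; the standard mollification $(\rho_j g)*\theta_\varepsilon$ (applied componentwise) is smooth, has support in a slightly larger compact subset of $U_j$ for $\varepsilon$ small, and converges in $L^2$ to $\rho_j g$ by the classical Friedrichs mollifier theorem. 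Pulling back, summing over $j$, and using that the metric coefficients are bounded on $K$ to pass between the coordinate $L^2$-norm and the intrinsic one, we obtain smooth compactly supported forms converging to $g$ in $L^2_{p,q}(K)$, hence in $L^2_{p,q}(X,\varphi)$.

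The main obstacle is not any deep analysis—both the truncation and the mollification are classical—but rather the bookkeeping of transferring estimates through the partition of unity and the metric: one must check that multiplying by a fixed smooth cutoff does not destroy $L^2$-membership (it does not, since cutoffs are bounded with bounded derivatives, though for this particular statement we only need boundedness of the cutoff itself), and that mollification in local coordinates, which does not commute with the intrinsic norm, still yields convergence—this works because on the relevant compact set the transition between the flat coordinate metric and the given Hermitian metric is a bounded bi-Lipschitz comparison. I would also remark that the argument is entirely standard on any Riemannian manifold and the Hermitian/complex structure plays no role beyond fixing the bidegree $(p,q)$ of the forms involved.
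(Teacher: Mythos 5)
Your argument is correct and follows essentially the same route as the paper's proof: truncation to a relatively compact set, a finite partition of unity subordinate to coordinate charts, and componentwise Friedrichs mollification, with the continuous weight $e^{-\varphi}$ absorbed by its boundedness on compact sets. The only cosmetic difference is that you truncate with smooth cutoffs and dominated convergence where the paper simply restricts $f$ to a large compact set; both steps are interchangeable here.
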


\begin{proof}
This follows by the usual mollifier method with a suitable partition of unity.
Let $f\in L^2_{p,q}(X,\varphi)$ and $\epsilon>0$. Then there exists a compact subset $K\subset\subset X$
such that $\|\widehat{f}-f\|_{L^2_{p,q}(X,\varphi)} < \epsilon$ if we denote by $\widehat{f}$ the trivial
extension of $f|_K$ to $X$ (see e.g. \cite{Alt}, A.1.16.2).
Now then, cover $K$ by finitely many coordinate charts $U_1, ..., U_N$, and let $\{\psi_j\}$
be a partition of unity for $\{U_j\}$. Then each $\psi_j \widehat{f}$ can be approximated
by convolution with a Dirac sequence (mollifier method): there exist $h_j\in C^\infty_{(p,q),cpt}(X)$
such that $\|\psi_j\widehat{f}-h_j\|_{L^2_{p,q}(X,\varphi)} < \epsilon/N$.

Letting $h:=\sum_{j=1}^N h_j \in C^\infty_{(p,q),cpt}(X)$, it follows that $\|f-h\|_{L^2_{p,q}(X,\varphi)}<2\epsilon$.
\end{proof}

\newpage
We observe that small deformations cannot disturb the $L^2$-norm arbitrarily:

\begin{lem}\label{lem:deformation1}
Let $f\in L^2_{p,q}(X,\varphi)$. Then: for all $\epsilon>0$ and all $\Omega\subset\subset X$,
there exists $\delta_f>0$ such that
$$\|\Phi^* f -f\|_{L^2_{p,q}(X,\varphi)}<\epsilon$$
for all $\Phi\in \Def_{\delta_f}(\Omega,X)$.
\end{lem}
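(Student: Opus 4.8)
The plan is to reduce to the dense class of smooth compactly supported forms using Lemma \ref{lem:density}, and to handle the error term by the uniform boundedness of pullbacks under $\delta$-deformations coming from \eqref{eq:3delta}. Concretely, given $f\in L^2_{p,q}(X,\varphi)$, $\epsilon>0$ and $\Omega\subset\subset X$, first I would pick $g\in C^\infty_{(p,q),cpt}(X)$ with $\|f-g\|_{L^2_{p,q}(X,\varphi)}$ small (say $<\epsilon/4$). Since every $\delta$-variation is the identity outside $\Omega$, we have $\Phi^* f - f = \Phi^*(f-g) + (\Phi^* g - g) + (g - f)$, all three terms supported in $\o\Omega$. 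The first and third are controlled uniformly: by \eqref{eq:3delta}, $\|\Phi^*\|$ as an operator on forms is bounded by some universal function of $\delta$ (bounded for $\delta\leq 1$, say), and the change-of-variables Jacobian $\Phi^{-1}$ has similarly bounded derivative, so $\|\Phi^*(f-g)\|_{L^2_{p,q}(\o\Omega,\varphi)} \leq C\|f-g\|_{L^2_{p,q}(\o\Omega,\varphi)}$ with $C$ independent of $\Phi\in\Def_\delta(\Omega,X)$ once $\delta$ is, say, at most $1$.

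The remaining term $\|\Phi^* g - g\|_{L^2_{p,q}(X,\varphi)}$ is where the deformation is actually used, and this is the main point. Here I would exploit that $g$ is smooth with compact support, hence uniformly continuous together with its first derivatives on a neighborhood of $\o\Omega$; combined with $\md(\Phi)<\delta$, $\|\Phi_*-\id\|_\infty<3\delta$ and the fact that the weight $e^{-\varphi}$ and the volume element are continuous (hence bounded on $\o\Omega$), one gets a pointwise estimate $|(\Phi^* g)(x) - g(x)| \leq \omega(\delta)$ where $\omega(\delta)\to 0$ as $\delta\to 0$, uniformly in $x$ and in $\Phi\in\Def_\delta(\Omega,X)$. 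Integrating over the fixed compact set $\o\Omega$ then yields $\|\Phi^* g - g\|_{L^2_{p,q}(X,\varphi)} \leq \omega(\delta)\cdot\sqrt{\mathrm{vol}(\o\Omega)}\cdot\sup_{\o\Omega}e^{-\varphi/2}$, which we can make $<\epsilon/2$ by choosing $\delta$ small enough. Choosing $\delta_f$ to be the minimum of this $\delta$ and of the value making $C\|f-g\| < \epsilon/4$ (note $C$ is uniform, so this is a constraint only through $\delta\leq 1$), we conclude $\|\Phi^* f - f\|_{L^2_{p,q}(X,\varphi)} < \epsilon$ for all $\Phi\in\Def_{\delta_f}(\Omega,X)$.

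The step I expect to be the real obstacle — or at least the one requiring care — is making the pointwise estimate $|(\Phi^* g)(x) - g(x)| \leq \omega(\delta)$ genuinely uniform over the whole family $\Def_\delta(\Omega,X)$. The pullback of a $(p,q)$-form involves not only $g(\Phi(x))$ but also the matrix entries of $\Phi_*$ (in fact $(\Phi^*g)(x)$ depends on $\Phi_*$ at $x$ acting on $g$ evaluated at $\Phi(x)$), so one must simultaneously use that $\Phi(x)$ is $\delta$-close to $x$ (to control $g(\Phi(x)) - g(x)$ via uniform continuity of $g$) and that $\Phi_*$ is $3\delta$-close to the identity in the bundle metric on $TX$ (to control the coefficient distortion via the boundedness of $g$ itself on $\o\Omega$). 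Writing this out in local coordinates covering $\o\Omega$ by finitely many charts, both ingredients are available from Definition \ref{defn:Def}, but bookkeeping the constants so that they depend only on $g$, $\varphi$, $\Omega$ and the charts — and not on the particular $\Phi$ — is the substantive content; the deformation-path condition \eqref{eq:dt} is not needed for this lemma (it only becomes relevant for later continuity/connectedness arguments), so I would not invoke it here.
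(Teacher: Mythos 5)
Your proposal is correct and follows essentially the same route as the paper's proof: approximate $f$ by a smooth compactly supported form, absorb the error terms via the uniform bound on pullbacks coming from \eqref{eq:3delta} (this is the paper's estimate \eqref{eq:def1}), and handle the smooth form by the uniform pointwise estimate using $\md(\Phi)<\delta$ and $\|\Phi_*-\id\|_\infty<3\delta$ (the paper's \eqref{eq:def2}). Your observation that condition \eqref{eq:dt} is not needed here is also consistent with the paper.
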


\begin{proof}
By the previous Lemma \ref{lem:density}, we can choose a sequence $f_j\in C^\infty_{(p,q),cpt}(X)$ such that
$$\|f-f_j\|_{L^2_{p,q}(X,\varphi)} \rightarrow 0 \ \ \mbox{ as } \ \ j\rightarrow \infty.$$
Let $\epsilon>0$ and $\Omega\subset\subset X$.
For $\Phi\in \Def_\delta(\Omega,X)$, let $\Psi=\Phi^{-1}$ which is again in $\Def_\delta(\Omega,X)$ by Definition \ref{defn:Def}, 
and consider
\begin{eqnarray*}
\|\Phi^* ( f-f_j)\|_{L^2_{p,q}(X,\varphi)}^2 &=& \int_X | \Phi^* (f-f_j)|^2 e^{-\varphi} dV_X\\
&\leq& (1+3\delta)^{2(p+q)} \int_X \Phi^* |f-f_j|^2 e^{-\varphi} dV_X,
\end{eqnarray*}
since $f-f_j$ is a $(p,q)$-form and $\|\Phi_*\|_\infty < 1+3\delta$ by \eqref{eq:3delta}.

With $\|\Psi_*\|_\infty < 1+3\delta$ and $n=\dim_\C X$, it follows that
\begin{eqnarray*}
\|\Phi^* ( f-f_j)\|_{L^2_{p,q}(X,\varphi)}^2 &\leq& (1+3\delta)^{2p+2q} \int_X |f-f_j|^2 \Psi^* \big( e^{-\varphi} dV_X \big)\\
&\leq& (1+3\delta)^{2(p+q+n)} \int_X |f-f_j|^2 \Psi^* \big( e^{-\varphi}\big) dV_X\\
&\leq& (1+3\delta)^{2(p+q+n)} \sup_{z\in \Omega} |e^{\varphi(z)-\varphi(\Psi(z))}| \int_X |f-f_j|^2 e^{-\varphi} dV_X\\
&=& (1+3\delta)^{2(p+q+n)} \sup_{z\in \Omega} |e^{\varphi(z)-\varphi(\Psi(z))}| \cdot \|f-f_j\|_{L^2_{p,q}(X,\varphi)}^2.
\end{eqnarray*}
Since $\varphi\in C^0(X)$, it is uniformly continuous on $\overline{\Omega}$.
So, $\dist_X(z,\Psi(z))< \delta$ implies that there exists a $\delta_0>0$ such that
\begin{eqnarray}\label{eq:def1}
\|\Phi^* ( f-f_j)\|_{L^2_{p,q}(X,\varphi)} \leq 2 \|f-f_j\|_{L^2_{p,q}(X,\varphi)}
\end{eqnarray}
if $\delta<\delta_0$.
Since $f_j\in C^\infty_{(p,q),cpt}(X)$, mapping distance $\md(\Phi)<\delta$ and $\|\Phi_*-\id \|_\infty<3\delta$,
we also get (for fixed $j$ ) that
\begin{eqnarray}\label{eq:def2}
\sup_{z\in \Omega} e^{-\varphi(z)} |f_j - \Phi^* f_j|^2(z)\rightarrow 0 \ \ \mbox{ as } \ \ \delta\rightarrow 0.
\end{eqnarray}
So then, choose $f_j$ such that $\|f-f_j\|<\epsilon/4$.
It follows by \eqref{eq:def1} and \eqref{eq:def2} that
\begin{eqnarray*}
\|\Phi^* f -f\|
&\leq& \|f-f_j\| + \|\Phi^* f - \Phi^* f_j\| + \| f_j - \Phi^* f_j\|\\
&\leq& 3 \|f-f_j\| + \mbox{Vol}(\Omega)^{1/2} \sup_{z\in \Omega} e^{-\varphi(z)/2} |f_j - \Phi^* f_j|(z)\\
&\leq& 3\epsilon/4 + \epsilon/4 = \epsilon
\end{eqnarray*}
if $\delta<\delta_0$ is small enough, say $\delta<\delta_f$.
\end{proof}

\newpage
Now, precompact sets in $L^2_{p,q}(X,\varphi)$ can be characterized by:

\begin{thm}\label{thm:precompact}
Let $X$ be a Hermitian manifold and $\mathcal{A}$ a bounded subset of $L^2_{p,q}(X,\varphi)$.
Then $\mathcal{A}$ is precompact if and only if the following two conditions are fulfilled:

(i) for all $\epsilon>0$ and all $\Omega\subset\subset X$, there exists $\delta>0$ such that
$$\|\Phi^* f -f\|_{L^2_{p,q}(X,\varphi)}<\epsilon$$
for all $\Phi\in \Def_\delta(\Omega,X)$ and all $f\in\mathcal{A}$.

(ii) for all $\epsilon>0$, there exists $\Omega_\epsilon\subset\subset X$ such that
$$\|f\|_{L^2_{p,q}(X-\Omega_\epsilon,\varphi)}<\epsilon$$
for all $f\in\mathcal{A}$.
\end{thm}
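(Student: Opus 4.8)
The plan is to prove the two implications separately, modeling the argument on the classical Riesz--Kolmogorov compactness criterion in $L^2(\R^n)$, where condition (i) plays the role of equicontinuity of translates and condition (ii) plays the role of uniform smallness at infinity (tightness).

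\textbf{Necessity.} Assume $\mathcal{A}$ is precompact; I would show (i) and (ii) hold. For (ii), fix $\epsilon>0$ and cover $\o{\mathcal{A}}$ by finitely many balls of radius $\epsilon/2$ centered at $f_1,\dots,f_N\in L^2_{p,q}(X,\varphi)$. For each $f_k$, choose (by the trivial-extension argument in the proof of Lemma \ref{lem:density}) a compact set $K_k\subset\subset X$ with $\|f_k\|_{L^2_{p,q}(X-K_k,\varphi)}<\epsilon/2$, and set $\Omega_\epsilon=\bigcup_k K_k$. For arbitrary $f\in\mathcal{A}$, picking $k$ with $\|f-f_k\|<\epsilon/2$ gives $\|f\|_{L^2_{p,q}(X-\Omega_\epsilon,\varphi)}\leq\|f-f_k\|+\|f_k\|_{L^2_{p,q}(X-\Omega_\epsilon,\varphi)}<\epsilon$. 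For (i), fix $\epsilon>0$ and $\Omega\subset\subset X$, cover $\o{\mathcal{A}}$ by balls of radius $\epsilon/3$ about $f_1,\dots,f_N$, and apply Lemma \ref{lem:deformation1} to each $f_k$ to obtain $\delta_k>0$ such that $\|\Phi^*f_k-f_k\|<\epsilon/3$ for all $\Phi\in\Def_{\delta_k}(\Omega,X)$. Put $\delta=\min_k\delta_k$. The uniform operator bound $\|\Phi^*g\|\leq(1+3\delta)^{p+q+n}(\sup_{z\in\Omega}|e^{\varphi(z)-\varphi(\Psi(z))}|)^{1/2}\|g\|$ from the proof of Lemma \ref{lem:deformation1} shows $\|\Phi^*\|$ is bounded uniformly in $\Phi\in\Def_\delta(\Omega,X)$ by some constant $C$ (shrinking $\delta$ so the sup is $\leq 2$); then for any $f\in\mathcal{A}$ with $\|f-f_k\|<\epsilon/3$ we get $\|\Phi^*f-f\|\leq\|\Phi^*(f-f_k)\|+\|\Phi^*f_k-f_k\|+\|f_k-f\|\leq (C+1)\tfrac{\epsilon}{3}+\tfrac{\epsilon}{3}$, which is $\leq\epsilon$ after rescaling $\epsilon$ at the outset.

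\textbf{Sufficiency.} Assume (i) and (ii); I would show $\mathcal{A}$ is totally bounded. Fix $\epsilon>0$. Use (ii) to get $\Omega=\Omega_{\epsilon}\subset\subset X$ with $\|f\|_{L^2_{p,q}(X-\Omega,\varphi)}<\epsilon$ for all $f\in\mathcal{A}$; it then suffices to find a finite $\epsilon$-net for the restrictions $\{f|_{\Omega'}\}$ on a slightly larger relatively compact $\Omega'\supset\supset\Omega$ (so that deformations supported in $\Omega'$ are available). The key device is a \emph{mollification-by-deformation} operator: I would build a smoothing operator $T_\delta$ by averaging pullbacks $\Phi^*f$ over a finite-dimensional family of $\delta$-deformations parametrized by a bounded region, so that on the one hand $\|T_\delta f-f\|<\epsilon$ uniformly over $\mathcal{A}$ by (i) (averaging an inequality that holds for each $\Phi$), and on the other hand $T_\delta f$ has uniformly bounded $C^1$-norm on $\o{\Omega'}$ (a bounded-in-$L^2$ family becomes bounded-in-$C^1$ after convolution-type smoothing, using that the averaging kernel is smooth with derivatives controlled by $\delta$). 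Then $\{T_\delta f:f\in\mathcal{A}\}$ is a bounded subset of $C^1(\o{\Omega'})$, hence precompact in $C^0(\o{\Omega'})$ by Arzel\`a--Ascoli, hence precompact in $L^2_{p,q}(\Omega',\varphi)$; extract a finite $\epsilon$-net for it, which together with the $\|T_\delta f-f\|<\epsilon$ bound and the tightness from (ii) yields a finite $O(\epsilon)$-net for $\mathcal{A}$.

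\textbf{Main obstacle.} The delicate point is the sufficiency direction, specifically constructing the averaging operator $T_\delta$ and verifying the regularization estimate in the curved, non-compact Hermitian setting. In $\R^n$ one simply convolves with a mollifier, i.e. averages translates; here ``translation'' must be replaced by the flow of $\delta$-deformations, and one must (a) exhibit enough deformations — this is where a coordinate chart and Lemma \ref{lem:trafo} come in, transferring Euclidean mollifier-translations on a chart containing $\o{\Omega'}$ back to $\Def_{C\delta}(\Omega,X)$ — and (b) show that averaging $\Phi^*f$ against a smooth compactly supported density on the parameter space produces a form whose derivatives are bounded in terms of $\|f\|_{L^2}$ and the (fixed) geometry, uniformly in $\delta$ small. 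The bookkeeping of how $\|\Phi_*-\id\|_\infty$ and condition \eqref{eq:dt} propagate through the averaging is the technical heart; once the smoothed family is seen to be $C^1$-bounded on $\o{\Omega'}$, Arzel\`a--Ascoli finishes the argument routinely.
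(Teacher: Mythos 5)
Your proposal follows essentially the same route as the paper: the necessity direction via finite $\epsilon$-nets, Lemma \ref{lem:deformation1}, and the uniform operator bound \eqref{eq:def1} is identical, and the sufficiency direction --- realizing mollification in coordinate charts as an average of small translations, which are extended to $\delta$-deformations via Lemma \ref{lem:trafo} so that condition (i) controls the error, followed by a $C^1$-bound and Arzel\`a--Ascoli --- is exactly the paper's construction of the operators ${\bf T}_k$. One small correction: the $C^1$-bound on the smoothed forms need not (and does not) hold uniformly in $\delta$, since it involves $\|\nabla h_{\epsilon_j}\|_{L^2}$, which blows up as $\epsilon_j\rightarrow 0$; but this is harmless, as for each fixed accuracy level one fixes the mollification parameters and only needs a bound depending on them, which is all the Arzel\`a--Ascoli step uses.
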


\begin{proof}
First, assume that $\mathcal{A}$ is precompact. Let $\epsilon>0$.
By definition, there exists an integer $N_\epsilon$ and forms $f_1, ..., f_{N_\epsilon}$
such that
$$\mathcal{A} \subset \bigcup_{j=1}^{N_\epsilon} B_{\epsilon/4}(f_j)\ \ \mbox{ in }\ L^2_{p,q}(X,\varphi).$$
To show (ii), choose $\Omega_\epsilon\subset\subset X$
so big that
$$\|f_j\|_{L^2_{p,q}(X-\Omega_\epsilon,\varphi)} < \epsilon/2\ \ \mbox{ for }\ j=1, ..., N_\epsilon,$$
which is possible because we have to consider only finitely many forms simultaneously
(see e.g. \cite{Alt}, A.1.16.2). For an arbitrary $f\in\mathcal{A}$, there exists an index $j_0$
such that
\begin{eqnarray*}
\|f\|_{L^2_{p,q}(X-\Omega_\epsilon,\varphi)} \leq \|f-f_{j_0}\|_{L^2_{p,q}(X-\Omega_\epsilon,\varphi)} + \|f_{j_0}\|_{L^2_{p,q}(X-\Omega_\epsilon,\varphi)} < \epsilon/4 +\epsilon/2.
\end{eqnarray*}
So, property (ii) is valid.

To show (i), we proceed analogously. Let $\Omega\subset\subset X$.
For each of the (finitely many) $f_j$, there exists by Lemma \ref{lem:deformation1}
a $\delta_j>0$ depending on $f_j$ and $\epsilon$ such that
$$\|\Phi^* f_j - f_j\|_{L^2_{p,q}(X,\varphi)} < \epsilon/4$$
for all $\Phi\in \Def_{\delta_j}(\Omega,X)$. Set $\delta':=\min_{1\leq j\leq N_\epsilon} \{\delta_j\}$.
Now then, there exists for any $f\in\mathcal{A}$ an index $j_0$ such that
\begin{eqnarray*}
\|\Phi^* f -f \|_{L^2_{p,q}(X,\varphi)} &\leq& 
\|\Phi^* f_{j_0} - f_{j_0}\|_{L^2_{p,q}(X,\varphi)} + \|f_{j_0} - f\|_{L^2_{p,q}(X,\varphi)}\\
&&  + \|\Phi^*(f-f_{j_0})\|_{L^2_{p,q}(X,\varphi)}\\
&<& \epsilon/4 + \epsilon/4 + \|\Phi^*(f-f_{j_0})\|_{L^2_{p,q}(X,\varphi)}
\end{eqnarray*}
for all $\Phi\in \Def_{\delta'}(\Omega,X)$.
But
$$ \|\Phi^*(f-f_{j_0})\|_{L^2_{p,q}(X,\varphi)} \leq 2  \|f-f_{j_0}\|_{L^2_{p,q}(X,\varphi)} < 2 \epsilon/4$$
for all $\Phi\in \Def_\delta(\Omega,X)$ (independent of $f$, $f_{j_0}$) if we choose $\delta<\delta'$ small enough
as in the proof of Lemma \ref{lem:deformation1} (see estimate \eqref{eq:def1}).
That proves property (i).\\

Conversely, assume that condition (i) and (ii) are satisfied.
Our first objective is to show that under these circumstances the approximation
by smooth compactly supported forms as in Lemma \ref{lem:density}
can be made uniformly for all $f \in\mathcal{A}$, i.e.
we will construct a sequence of operators
$${\bf T}_k: \mathcal{A} \rightarrow C^\infty_{(p,q),cpt}(X)\ , \ k\in\N,$$
such that
\begin{eqnarray}\label{eq:uniform}
\|{\bf T}_k f - f\|_{L^2_{p,q}(X,\varphi)} < 1/k\ \ \mbox{ for all }\ f\in \mathcal{A}.
\end{eqnarray}
We start by using property (ii) to choose an exhaustion
$$\Omega_1 \subset\subset \Omega_2 \subset \subset \Omega_3 \subset\subset \cdots \subset\subset X$$
of $X$ by open relatively compact subsets $\Omega_k$ such that
\begin{eqnarray}\label{eq:exh1}
\|f\|_{L^2_{p,q}(X-\Omega_k,\varphi)} < 1/(3(k+1))\ \ \mbox{ for all }\ f\in \mathcal{A}.
\end{eqnarray}

We will now define ${\bf T}_k$. Let $\chi:=\chi_{\Omega_k}$ be the characteristic function of $\Omega_k$.
Cover $\o{\Omega_k}$ by finitely many open sets $U_1, ..., U_N \subset \subset \Omega_{k+1}$ which are contained in coordinate charts,
and choose cut-off functions $\psi_j\in C^\infty_{cpt}(U_j)$, $0\leq \psi_j \leq 1$, such that
\begin{eqnarray}\label{eq:partition1}
\psi:=\sum_{j=1}^N \psi_j \in C^\infty_{cpt}(\Omega_{k+1})
\end{eqnarray}
satisfies
\begin{eqnarray}\label{eq:partition2}
0\leq \psi \leq 1\ \ \mbox{ and }\ \ \psi|_{\o{\Omega_k}} \equiv 1,
\end{eqnarray}
i.e. $\{\psi_j\}_j$ is a partition of unity on $\o{\Omega_k}$ (subordinate to $\{U_j\}$).
For $f\in\mathcal{A}$, let
$$f_j:=\psi_j f.$$
Note that the $f_j$ have compact support in $\Omega_{k+1}$ and that
\begin{eqnarray*}
\chi f = \chi \sum_{j=1}^N  f_j
\end{eqnarray*}
because $\sum \psi_j =\psi\equiv 1$ on $\o{\Omega_k}$ and $\chi$ is the characteristic function of $\Omega_k$.

Since $\psi_j\in C^\infty_{cpt}(X)$, we also observe that the forms $f_j=\psi_j f$
still satisfy condition (i) as $f$ runs through $\mathcal{A}$:
for all $\epsilon>0$ and all $\Omega\subset\subset X$, there exists $\delta_j>0$ such that
\begin{eqnarray}\label{eq:fj}
\|\Phi^* f_j -f_j\|_{L^2_{p,q}(X,\varphi)} = \|\Phi^*(\psi_j f) - \psi_j f\|_{L^2_{p,q}(X,\varphi)}<\epsilon
\end{eqnarray}
for all $\Phi\in \Def_{\delta_j}(\Omega,X)$ and all $f\in\mathcal{A}$.
The reason is that the factor $\psi_j$ can be absorbed exactly as the factor $e^{-\varphi}$
in the proof of Lemma \ref{lem:deformation1} (see the derivation of \eqref{eq:def1}).

\newpage
Since $U_j$ is contained in a coordinate chart, we will treat it (for simplicity of notation)
as an open set in $\C^n$. This is possible by Lemma \ref{lem:trafo}.
We will approximate $f_j$ on $U_j$ by smoothing with a Dirac sequence.
So, let $\eta\in C^\infty(\C^n)$, $0\leq \eta\leq 1$, with support in the unit ball, $\int \eta dV=1$,
and set
$$h_\epsilon := \epsilon^{-2n} h(z/\epsilon)\ \ \mbox{ for }\ \epsilon>0.$$

Now then, define
\begin{eqnarray*}
{\bf T}^j_k f (z) := (\chi f_j) * h_{\epsilon_j} (z) = \int_{U_j} h_{\epsilon_j}(z-\zeta) (\chi \psi_j f)(\zeta) dV_{\C^n}
\end{eqnarray*}
where $\epsilon_j>0$ has to be chosen later on (small enough), but at least
$$\epsilon_j < \dist_{\C^n}(\supp \psi_j, b U_j),$$
which also implies that $\epsilon_j < \dist_{\C^n}(\supp f_j, bU_j) \leq \dist_{\C^n}(\supp \chi f_j,bU_j)$.
Hence ${\bf T}^j_k f \in C^\infty_{(p,q),cpt}(U_j)$, and we can define 
\begin{eqnarray*}
{\bf T}_k f := \sum_{j=1}^N {\bf T}_k^j f = \sum_{j=1}^N (\chi \psi_j f) * h_{\epsilon_j} \in C^\infty_{(p,q),cpt}(X).
\end{eqnarray*}

Recall that $\chi=\chi_{\o{\Omega_k}}$, the covering $U_1, ..., U_N$, the choice of local coordinates
and the $\epsilon_j$ clearly depend on $k$. 

We will now show that
$$\|{\bf T}_k f - f\|_{L^2_{p,q}(X,\varphi)} < 1/k\ \ \mbox{ for all }\ f\in \mathcal{A}$$
if we choose the $\epsilon_j$ small enough.
By \eqref{eq:partition1}, \eqref{eq:partition2} and \eqref{eq:exh1},
we get
\begin{eqnarray*}
\|{\bf T}_k f -f\|_{L^2_{p,q}(X,\varphi)} &\leq& 
\|{\bf T}_k f -\psi f\|_{L^2_{p,q}(X,\varphi)} + \|\psi f - f\|_{L^2_{p,q}(X,\varphi)}\\
&=& \|{\bf T}_k f -\psi f\|_{L^2_{p,q}(X,\varphi)} + \|(1-\psi) f \|_{L^2_{p,q}(X-\Omega_{k-1},\varphi)}\\
&\leq& \|{\bf T}_k f -\psi f\|_{L^2_{p,q}(X,\varphi)} + \|f \|_{L^2_{p,q}(X-\Omega_{k-1},\varphi)}\\
&\leq& \|{\bf T}_k f -\psi f\|_{L^2_{p,q}(X,\varphi)} + 1/(3k)\\
&\leq& \sum_{j=1}^N \|{\bf T}_k^j f - \psi_j f\|_{L^2_{p,q}(X,\varphi)} + 1/(3k).
\end{eqnarray*}
It remains to show that $\sum \|{\bf T}^j_k f -\psi_j f\|_{L^2_{p,q}(X,\varphi)} \leq 2/(3k)$ for all $f\in\mathcal{A}$
if we choose the $\epsilon_j>0$ small enough. So, consider
\begin{eqnarray*}
{\bf T}_k^j f (z) - (\psi_j f)(z) &=& \int_{U_j} h_{\epsilon_j}(z-\zeta) (\chi\psi_j f)(\zeta) dV_{\C^n}(\zeta)\\
&& - (\psi_j f)(z) \int_{U_j} h_{\epsilon_j}(z-\zeta) \big[\chi(\zeta) + (1-\chi)(\zeta)\big] dV_{\C^n}(\zeta)\\
&=& \int_{U_j} h_{\epsilon_j}(z-\zeta) \chi(\zeta) \big[ (\psi_j f)(\zeta) - (\psi_j f)(z)\big] dV_{\C^n}(\zeta)\\
&& - (\psi_j f)(z) \int_{U_j} h_{\epsilon_j}(z-\zeta) (1-\chi)(\zeta) dV_{\C^n}(\zeta)\\
&=:& \Delta^j_1(z) - \Delta^j_2(z).
\end{eqnarray*}
Since $(1-\chi)(\zeta)=0$ for $\zeta\in \Omega_k$, we can choose $\epsilon_j$ (at least) so small
that the integral in $\Delta^j_2(z)$ vanishes if $z\in \Omega_{k-1}$. It follows that
\begin{eqnarray*}
|\Delta^j_2(z)| \leq |(\psi_j f)(z)| \ \ \mbox{ and } \ \ \Delta^j_2(z)=0 \mbox{ if } z\in \Omega_{k-1}.
\end{eqnarray*}
This yields by use of \eqref{eq:exh1} that
\begin{eqnarray*}
\sum_{j=1}^N \|{\bf T}^j_k f -\psi_j f\|_{L^2_{p,q}(X,\varphi)}
&\leq& \sum_{j=1}^N \|\Delta^j_1 \|_{L^2_{p,q}(X,\varphi)} + \sum_{j=1}^N \| \psi_j f\|_{L^2_{p,q}(X-\Omega_{k-1},\varphi)}\\
&\leq& \sum_{j=1}^N \|\Delta^j_1 \|_{L^2_{p,q}(X,\varphi)} + \| f\|_{L^2_{p,q}(X-\Omega_{k-1},\varphi)}\\
&<&  \sum_{j=1}^N \|\Delta^j_1 \|_{L^2_{p,q}(X,\varphi)} + 1/(3k).
\end{eqnarray*}
So, it only remains to show that $\|\Delta^j_1\|_{L^2_{p,q}(X,\varphi)} \leq 1/(3kN)$ for all $f\in\mathcal{A}$
if we choose $\epsilon_j>0$ small enough. Note that we already arranged $\epsilon_j$ so small that $\Delta^j_1$
has support in $U_j$.

By standard estimates for convolution integrals (see \cite{Alt}, 2.12.1,
as it is applied in the proof of \cite{Alt}, 2.15) and $|\chi|\leq 1$,
\begin{eqnarray*}
\|\Delta^j_1\|_{L^2_{p,q}(U_j,\varphi)}
&\lesssim& \| h_{\epsilon_j}\|_{L^1(\C^n)} \sup_{\substack{v\in\C^n\\|v|\leq \epsilon_j}} \| f_j(\cdot - v) - f_j\|_{L^2_{p,q}(U_j,\varphi)}.
\end{eqnarray*}
But translations by $v$ in the coordinate chart $U_j$ with $|v|\leq \epsilon_j$ can
be extended to some $\delta_j$-deformation of $\Omega_{k+1}$ in $X$ if $\epsilon_j$ is small enough,
because the connecting curves $\Phi_t(z)=z-tv$, $0\leq t\leq 1$, behave more and more like geodesics
as $v\rightarrow 0$ and $\o{\Omega_{k+1}}$ is compact. That shows that \eqref{eq:dt} is fulfilled if $v$ is small enough,
the other conditions from Definition \ref{defn:Def} are easy to check.
With Lemma \ref{lem:trafo}, we can assume that $\delta_j\rightarrow 0$ as $\epsilon_j\rightarrow 0$.

Hence
\begin{eqnarray*}
\|\Delta^j_1\|_{L^2_{p,q}(U_j,\varphi)}
&\lesssim& \sup_{\Phi \in \Def_{\delta_j}(\Omega_{k+1},X)} \|\Phi^* f_j -f_j\|_{L^2_{p,q}(X,\varphi)} < 1/(3kN)
\end{eqnarray*}
for all $f\in\mathcal{A}$ by \eqref{eq:fj} if we choose first $\delta_j$ and then $\epsilon_j$ small enough.
This completes the proof of \eqref{eq:uniform}, i.e. the operators ${\bf T}_k$ give a uniform
approximation of all $f\in\mathcal{A}$ by smooth forms with compact support in $\Omega_{k+1}$.

Since $\mathcal{A}$ is a bounded subset of $L^2_{p,q}(X,\varphi)$,
there exists a constant $C_k>0$ such that
\begin{eqnarray*}
|{\bf T}_k^j f (z)| &\leq& \|h_{\epsilon_j}\|_{L^2(\C^n)} \| f_j \|_{L^2_{p,q}(\C^n)} \leq C_k,
\end{eqnarray*}
and
\begin{eqnarray*}
|P {\bf T}_k^j f (z)| &\lesssim& \|\nabla h_{\epsilon_j}\|_{L^2(\C^n)} \| f_j \|_{L^2_{p,q}(\C^n)} \leq C_k
\end{eqnarray*}
for any first order differential operator with constant coefficients $P$.
It follows that
$$\mathcal{A}_k:=\{{\bf T}_k f: f\in\mathcal{A}\}$$
is a bounded subset of $C^1_{p,q}(\o{\Omega_{k+1}})$. 

Let $\gamma>0$.
By the Arzela-Ascoli Theorem, there exist finitely many forms
$g_l\in C^0_{p,q}(\o{\Omega_{k+1}})$, $l=1, ..., N(k,\gamma)$, such that
$$\mathcal{A}_k \subset \bigcup_{l=1}^{N(k,\gamma)} B_\gamma(g_l)$$
with respect to the $C^0$-norm in $C^0_{p,q}(\o{\Omega_{k+1}})$.
But there exists a constant $D_k>0$ such that
$$\|h\|_{L^2_{p,q}(\o{\Omega_{k+1}},\varphi)} \leq D_k \|h\|_{C^0_{p,q}(\o{\Omega_{k+1}})}$$
for all $(p,q)$-forms $h$.

Taking into account that the forms ${\bf T}_k f$ have compact support in $\Omega_{k+1}$,
it follows that
$$\mathcal{A}_k \subset \bigcup_{l=1}^{N(k,\gamma)} B_{\gamma D_k}(g_l)$$
in $L^2_{p,q}(X,\varphi)$ if we extend the forms $g_l$ trivially to $X$.
But $\|{\bf T}_k f-f\|_{L^2_{p,q}(X,\varphi)}<1/k$ for all $f\in\mathcal{A}$.
Hence
$$\mathcal{A} \subset \bigcup_{l=1}^{N(k,\gamma)} B_{\gamma D_k+1/k}(g_l)$$
in $L^2_{p,q}(X,\varphi)$.
This means that $\mathcal{A}$ is precompact because $\gamma D_k +1/k$
can be made arbitrarily small (by choosing first $k$ big and then $\gamma$ small enough).
\end{proof}

We remark that the criterion carries over to $L^p$-forms, $0\leq p<\infty$, without further
difficulties.

\newpage
\section{Compactness of Green operators on Hermitian spaces}\label{sec:compact}

Let $X$ be a Hermitian manifold, $\varphi\in C^0(X)$ a weight function,
and
$$T: \Dom T \subset L^2_*(X,\varphi) \rightarrow L^2_*(X,\varphi)$$
a densely defined closed linear partial differential operator such that $T^2=0$.
We will always assume that the smooth compactly supported forms $C^\infty_{*,cpt}(X)$ are contained in the domain of such an operator.
The adjoint operator $T^*$ is also closed and densely defined, $T^{**}=T$, $(T^*)^2=0$ and
$$(\ker T)^\perp = \o{\Ra(T^*)}\ ,\ (\ker T^*)^\perp = \o{\Ra(T)},$$
where we denote by $\Ra(T)$, $\Ra(T^*)$ the range of $T$ and $T^*$, respectively.
Then we define $P: L^2_*(X,\varphi) \rightarrow L^2_*(X,\varphi)$ by
\begin{eqnarray*}
\Dom P &=& \{u\in\Dom T\cap\Dom T^*: Tu\in \Dom T^*, T^*u\in \Dom T\},\\
P &=& T^* T + T T^*.
\end{eqnarray*}
Then:

\begin{thm}\label{thm:self-adjoint}
$P$ is a densely defined closed self-adjoint operator, $(Pu,u)\geq 0$.
\end{thm}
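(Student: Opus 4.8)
\emph{The plan is to} verify the three assertions in turn: density of the domain, symmetry and $(Pu,u)\ge 0$ are routine, while self-adjointness — which also yields closedness, a self-adjoint operator being automatically closed — I would obtain by identifying $P$ with the self-adjoint operator attached to a natural quadratic form. First, $C^\infty_{*,cpt}(X)\subset\Dom P$: if $u$ is smooth with compact support, then $Tu$ and $T^{*}u$ are again smooth with compact support (both $T$ and its formal adjoint being differential operators), so $Tu\in\Dom T^{*}$ and $T^{*}u\in\Dom T$; hence $\Dom P$ is dense. For $u,v\in\Dom P$,
$$(Pu,v)=(T^{*}Tu,v)+(TT^{*}u,v)=(Tu,Tv)+(T^{*}u,T^{*}v)=(u,Pv),$$
using $Tu\in\Dom T^{*}$, $T^{*}u\in\Dom T$ and $v\in\Dom T\cap\Dom T^{*}$; in particular $(Pu,u)=\|Tu\|^{2}+\|T^{*}u\|^{2}\ge 0$. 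So $P$ is densely defined, symmetric and non-negative.

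Next I would introduce the sesquilinear form
$$Q(u,v):=(Tu,Tv)_{L^{2}}+(T^{*}u,T^{*}v)_{L^{2}},\qquad \Dom Q:=\Dom T\cap\Dom T^{*},$$
which is densely defined (it contains $C^\infty_{*,cpt}(X)$), non-negative, and closed, since the form norm $\|u\|^{2}+Q(u,u)=\|u\|^{2}+\|Tu\|^{2}+\|T^{*}u\|^{2}$ is the sum of the graph norms of the closed operators $T$ and $T^{*}$. By the first representation theorem, $Q$ is represented by a unique self-adjoint operator $\wt{P}\ge 0$ with domain $\{u\in\Dom Q:\exists f\in L^{2}_{*}(X,\varphi)\text{ with }Q(u,v)=(f,v)\ \forall v\in\Dom Q\}$ and $\wt{P}u=f$. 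The computation above shows $Q(u,v)=(Pu,v)$ for $u\in\Dom P$ and all $v\in\Dom Q$, so $P\subset\wt{P}$; the crux is the reverse inclusion $\wt{P}\subset P$.

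For this I would use $T^{2}=0$ and $(T^{*})^{2}=0$, i.e. $\Ra T\subset\ker T$ and $\Ra T^{*}\subset\ker T^{*}$: these give $\o{\Ra T}\subset\ker T\subset\Dom T$ with $T\equiv 0$ there and, decisively, $(\ker T)^{\perp}=\o{\Ra T^{*}}\subset\ker T^{*}\subset\Dom T^{*}$ with $T^{*}\equiv 0$ there, together with the orthogonal decomposition $L^{2}_{*}(X,\varphi)=\o{\Ra T}\oplus\mathcal H\oplus\o{\Ra T^{*}}$, $\mathcal H:=\ker T\cap\ker T^{*}$. Let $u\in\Dom\wt{P}$ and $f:=\wt{P}u$, so $Q(u,v)=(f,v)$ for all $v\in\Dom Q$. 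For arbitrary $w\in\Dom T$ write $w=w_{0}+w_{1}$ with $w_{0}:=\Pi_{\ker T}w$ and $w_{1}:=w-w_{0}\in(\ker T)^{\perp}$; then $w_{1}\in\Dom T$ and $w_{1}\in\o{\Ra T^{*}}\subset\Dom T^{*}$, so $w_{1}\in\Dom Q$, while $Tw_{1}=Tw$ and $T^{*}w_{1}=0$. Substituting $v=w_{1}$ into $Q(u,v)=(f,v)$ cancels the $T^{*}$-term and gives $(Tu,Tw)=(f,w_{1})=(\Pi_{\o{\Ra T^{*}}}f,\,w)$ for all $w\in\Dom T$, hence $Tu\in\Dom T^{*}$ and $T^{*}Tu=\Pi_{\o{\Ra T^{*}}}f$. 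The symmetric argument, splitting $w\in\Dom T^{*}$ along $\ker T^{*}$ and its complement $\o{\Ra T}\subset\ker T\subset\Dom T$, yields $T^{*}u\in\Dom T$ and $TT^{*}u=\Pi_{\o{\Ra T}}f$. Finally $\Pi_{\mathcal H}f=0$, since $(f,h)=Q(u,h)=0$ for every $h\in\mathcal H\subset\Dom Q$. Thus $u\in\Dom P$ and $Pu=T^{*}Tu+TT^{*}u=(\Pi_{\o{\Ra T^{*}}}+\Pi_{\o{\Ra T}})f=f=\wt{P}u$, so $\wt{P}\subset P$, $P=\wt{P}$ is self-adjoint, and hence closed.

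I expect the only genuinely delicate step to be this reverse inclusion, namely deducing from the weak identity $Q(u,\cdot)=(f,\cdot)$ that $Tu\in\Dom T^{*}$, $T^{*}u\in\Dom T$, and that $f$ has no harmonic component. What makes it work is precisely that $T^{2}=0$ forces $(\ker T)^{\perp}$ into $\Dom T^{*}$ (and $(\ker T^{*})^{\perp}$ into $\Dom T$), so that these orthogonal complements are admissible test directions on which exactly one of the two terms in $Q$ vanishes identically.
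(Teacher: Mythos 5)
Your proof is correct, but it takes a genuinely different route from the paper. The paper follows Gaffney's classical argument (via Proposition V.5.7 in Lieb--Michel and Folland--Kohn): it invokes von Neumann's lemma to get the bounded self-adjoint operators $(\id+TT^*)^{-1}$ and $(\id+T^*T)^{-1}$, forms $S=(\id+TT^*)^{-1}+(\id+T^*T)^{-1}-\id$, and uses $T^2=(T^*)^2=0$ to verify algebraically that $S$ is a two-sided bounded self-adjoint inverse of $\id+P$, whence $P$ is self-adjoint. You instead pass through the closed non-negative form $Q(u,v)=(Tu,Tv)+(T^*u,T^*v)$ on $\Dom T\cap\Dom T^*$ and Kato's first representation theorem, and then prove the delicate inclusion $\wt{P}\subset P$ by exploiting the weak Hodge decomposition $L^2_*(X,\varphi)=\o{\Ra(T)}\oplus(\ker T\cap\ker T^*)\oplus\o{\Ra(T^*)}$; the decisive point, correctly identified, is that $T^2=0$ forces $(\ker T)^\perp=\o{\Ra(T^*)}\subset\ker T^*\subset\Dom T^*$, so the projections of test elements onto these summands remain admissible and kill exactly one term of the form. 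All steps check out: the splitting $w=w_0+w_1$ of $w\in\Dom T$ does give $w_1\in\Dom Q$ with $Tw_1=Tw$, $T^*w_1=0$, the identity $(Tu,Tw)=(\Pi_{\o{\Ra(T^*)}}f,w)$ for all $w\in\Dom T$ does yield $Tu\in\Dom T^*$ with $T^*Tu=\Pi_{\o{\Ra(T^*)}}f$ by the very definition of the adjoint, the symmetric argument uses $T^{**}=T$, and $\Pi_{\ker T\cap\ker T^*}f=0$ closes the bookkeeping so that $Pu=f$. What each approach buys: the paper's argument is self-contained modulo von Neumann's lemma and produces the explicit resolvent $(\id+P)^{-1}$ as a by-product (useful for spectral estimates); yours is shorter once the representation theorem is granted, is the standard modern ``Friedrichs/form'' viewpoint, and isolates more transparently where $T^2=0$ enters. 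One minor point to make explicit in your write-up: the claim that $C^\infty_{*,cpt}(X)\subset\Dom T^*$ (needed both for density of $\Dom Q$ and of $\Dom P$) uses that the closed extension $T$ acts distributionally, so that $(Tw,v)=(w,\theta v)$ for the formal adjoint $\theta$ and all $w\in\Dom T$; the paper implicitly relies on the same fact.
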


\begin{proof}
We adopt we proof of Proposition V.5.7 in \cite{LM},
where the statement is proved in case $T=\dq_w$, the $\dq$-operator
in the sense of distributions. The proof in \cite{LM}
is more or less taken from \cite{FK}, Proposition 1.3.8, and is essentially due to Gaffney \cite{Gf}.

It is easy to see that $P$ is a densely defined closed operator 
with $(Pu,u)\geq 0$ for all $u\in\Dom P$,
we will show that $P$ is self-adjoint by checking that 
$$\Dom P=\Dom P^*.$$
We need the following Lemma of J. von Neumann as it is presented in \cite{LM}, Lemma V.5.10:
\begin{lem}\label{lem:neumann}
Let $A: V\rightarrow H$ be a closed densely defined operator on a Hilbert space $H$.
Set $R=\id + A^*A$, $S=\id + A A^*$,
\begin{eqnarray*}
\Dom(R) &=& \{x\in \Dom A: Ax\in\Dom A^*\},\\
\Dom(S) &=& \{x\in\Dom A^*: A^*x \in\Dom A\}.
\end{eqnarray*}
Then $R:\Dom(R)\rightarrow H$, $S:\Dom(S)\rightarrow H$ are linear bijective maps and $R^{-1}$, $S^{-1}: H\rightarrow H$
are continuous self-adjoint operators.
\end{lem}

Set
$$F=\id + P.$$
So, $F$ is a densely defined closed operator with
$$\|Fu\| \|u\| \geq (Fu,u) = \|u\|^2 + \|Tu\|^2 + \|T^* u\|^2$$
for all $u\in\Dom F=\Dom P$. Hence $\ker F=0$ and $\Ra(F)$ is closed.
By Lemma \ref{lem:neumann}, 
$$(\id + TT^*)^{-1}\ \ \mbox{ and }\ \ (\id+T^*T)^{-1}$$
are bounded self-adjoint operators.
So,
$$S:= (\id+TT^*)^{-1} + (\id+T^*T)^{-1} -\id$$
is also bounded and self-adjoint.

We will now show that $F$ is surjective and $S=F^{-1}$.
This implies that $F$ and $P$ are self-adjoint.
Consider
\begin{eqnarray}\label{eq:S1}
(\id + TT^*)^{-1} -\id &=& \left[ \id - (\id+TT^*)\right] (\id + TT^*)^{-1} = - TT^* (\id + TT^*)^{-1},\\
(\id + T^*T)^{-1} -\id &=& \left[ \id - (\id+T^*T)\right] (\id + T^*T)^{-1} = - T^*T (\id + T^*T)^{-1}.
\label{eq:S2}\end{eqnarray}
Therefore
\begin{eqnarray}\label{eq:S3}
\Ra\big((\id+TT^*)^{-1}\big) &\subset& \Dom TT^*,\\
\Ra\big((\id +T^*T)^{-1}\big) &\subset& \Dom T^*T,\label{eq:S4}
\end{eqnarray}
and \eqref{eq:S1} yields
$$S = (\id + T^*T)^{-1} - TT^* (\id + TT^*)^{-1}.$$
This implies with \eqref{eq:S4} and $T^2=0$ that
$$\Ra(S) \subset\Dom T^*T$$
and
$$T^* T S = T^* T (\id + T^* T)^{-1}.$$
By symmetry, \eqref{eq:S2}, \eqref{eq:S3} and $(T^*)^2=0$ give
$\Ra(S) \subset\Dom TT^*$
and
$$TT^* S = TT^* (\id + TT^*)^{-1}.$$
Summing up, $\Ra(S) \subset\Dom F$ and
\begin{eqnarray*}
FS &=& S + T^*T S + TT^* S\\
&=& (\id+TT^*)^{-1} + (\id+T^*T)^{-1} -\id + T^* T (\id + T^* T)^{-1} + TT^* (\id + TT^*)^{-1}\\
&=& (\id + T^*T) (\id + T^*T)^{-1} + (\id +TT^*)(\id + TT^*)^{-1} -\id = \id
\end{eqnarray*}
on $H$. So, $\Ra(F)=H$ and $S=F^{-1}$.
\end{proof}

\begin{cor}
$P$ induces the orthogonal decomposition
\begin{eqnarray*}
L^2_*(X,\varphi) &=& \ker P \oplus \o{\Ra(P)}\\
&=& \left( \ker T \cap \ker T^*\right) \oplus \o{\Ra(T)} \oplus \o{\Ra(T^*)}.
\end{eqnarray*}
\end{cor}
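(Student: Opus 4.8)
The plan is to combine the self-adjointness of $P$ established in Theorem \ref{thm:self-adjoint} with the purely algebraic relations $T^2 = 0$, $(T^*)^2 = 0$ together with the identities $(\ker T)^\perp = \o{\Ra(T^*)}$ and $(\ker T^*)^\perp = \o{\Ra(T)}$ recorded just before that theorem. The whole argument is formal; no analysis beyond Hilbert space generalities is needed once Theorem \ref{thm:self-adjoint} is in hand.

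First I would use that a densely defined closed self-adjoint operator satisfies $\ker P = \ker P^* = \Ra(P)^\perp$, which immediately gives the orthogonal splitting $L^2_*(X,\varphi) = \ker P \oplus \o{\Ra(P)}$. Next I would identify the kernel: the inclusion $\ker T\cap\ker T^* \subset \ker P$ is trivial from $P = T^*T + TT^*$, and for the reverse I would note that for $u\in\Dom P$ one may move operators across the inner product (all the required domain memberships being built into $\Dom P$) to get
$$(Pu,u) = (T^*Tu,u) + (TT^*u,u) = \|Tu\|^2 + \|T^*u\|^2,$$
so $Pu=0$ forces $Tu = T^*u = 0$; hence $\ker P = \ker T\cap\ker T^*$.

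Then I would treat the range. First, $T^2 = 0$ gives $\Ra(T)\subset\ker T = \big(\o{\Ra(T^*)}\big)^\perp$, so $\o{\Ra(T)}$ and $\o{\Ra(T^*)}$ are orthogonal; in particular their sum $\o{\Ra(T)}\oplus\o{\Ra(T^*)}$ is a \emph{closed} subspace. Since $Pu = TT^*u + T^*Tu \in \Ra(T)+\Ra(T^*)$ for $u\in\Dom P$, it follows that $\o{\Ra(P)} \subset \o{\Ra(T)}\oplus\o{\Ra(T^*)}$. For the opposite inclusion I would compare orthogonal complements:
$$\big(\o{\Ra(T)}\oplus\o{\Ra(T^*)}\big)^\perp = \ker T^*\cap\ker T = \ker P = \big(\o{\Ra(P)}\big)^\perp,$$
whence equality of the two closed subspaces. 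Putting the pieces together yields
$$L^2_*(X,\varphi) = \ker P\oplus\o{\Ra(P)} = \big(\ker T\cap\ker T^*\big)\oplus\o{\Ra(T)}\oplus\o{\Ra(T^*)}.$$

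I do not expect a serious obstacle. The only points that need a moment's care are checking that the manipulation $(Pu,u) = \|Tu\|^2 + \|T^*u\|^2$ is legitimate for every $u\in\Dom P$ (which is exactly why $\Dom P$ was defined with those extra domain conditions), and the elementary observation that the orthogonal sum of two closed subspaces is again closed — it is the latter that makes the inclusion $\o{\Ra(P)}\subset\o{\Ra(T)}\oplus\o{\Ra(T^*)}$ automatic rather than requiring a closed-range hypothesis.
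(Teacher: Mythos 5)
Your proof is correct and follows essentially the same route as the paper: self-adjointness for the splitting $L^2 = \ker P \oplus \o{\Ra(P)}$, the identity $(Pu,u)=\|Tu\|^2+\|T^*u\|^2$ for $\ker P = \ker T\cap\ker T^*$, and the comparison of orthogonal complements (via $(\ker T)^\perp = \o{\Ra(T^*)}$, $(\ker T^*)^\perp = \o{\Ra(T)}$) to identify $\o{\Ra(P)}$ with $\o{\Ra(T)}\oplus\o{\Ra(T^*)}$. No gaps.
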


\begin{proof}
The first equality follows from the self-adjointness of $P$, $\ker P = \ker T\cap \ker T^*$ follows from
$(Pu,u)=\|Tu\|^2 + \|T^* u\|^2$. Clearly, $\o{\Ra(P)}\subset \o{\Ra(T)} + \o{\Ra(T^*)}$
and $\o{\Ra(T)}\perp\o{\Ra(T^*)}$ yield $\o{\Ra(P)} \subset \o{\Ra(T)} \oplus \o{\Ra(T^*)}$.
On the other hand, 
assume that $f \perp \o{\Ra(T)}\oplus\o{\Ra(T^*)}$. Then $(f,Tg)=0$, $(f,T^*h)=0$ for all $g\in\Dom T$, $h\in\Dom T^*$.
Thus $f\in \Dom T^*\cap\Dom T$ and $(T^*f,g)=(Tf,h)=0$ for all $g,h\in L^2_*(X,\varphi)$ since $T$ and $T^*$ are densely defined. 
Hence, $f\in (\ker T\cap\ker T^*)$.
\end{proof}

\newpage

\begin{lem}\label{lem:Q}
Let $V_1, V_2, V_3 \subset L^2_*(X,\varphi)$ be closed subspaces such that
\begin{eqnarray}\label{eq:range1}
\Ra(T|_{V_1}) \subset V_2,\  \Ra(T|_{V_2})\subset V_3,
\end{eqnarray}
these ranges are closed, and $T|_{V_1}^* = T^*|_{V_2}$, $T|_{V_2}^* = T^*|_{V_3}$. 
It follows that the densely defined closed restricted operator
$$Q=P|_{V_2}: V_2 \rightarrow V_2$$
is self-adjoint and has closed range. Hence,
\begin{eqnarray}\label{eq:decomp1}
V_2 &=& \ker Q \oplus \Ra(Q)\\\label{eq:decomp2}
&=& \left(\ker T|_{V_2} \cap \ker T^*|_{V_2}\right) \oplus \Ra(T|_{V_1}) \oplus \Ra(T^*|_{V_3}),
\end{eqnarray}
and there exists a constant $c>0$ such that
\begin{eqnarray}\label{eq:apriori}
\begin{array}{llll}
c\|u\|^2 & \leq& \| Tu\|^2 + \|T^*u\|^2,\ \ \ &  \mbox{ for } u\in \Dom(T)\cap \Dom(T^*)\cap \Ra(Q),\\
c\|u\| & \leq& \| Qu\|,  & \mbox{ for } u\in \Dom(Q)\cap \Ra(Q).
\end{array}
\end{eqnarray}
\end{lem}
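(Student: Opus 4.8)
The plan is to run, for the truncated complex $V_1\to V_2\to V_3$ with differential $T$, the two classical $L^2$-arguments: von Neumann's trick for self-adjointness (exactly as in the proof of Theorem~\ref{thm:self-adjoint}) and the Hodge decomposition inside $V_2$ together with the basic a priori estimate for closed range.

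Write $A:=T|_{V_1}\colon \Dom T\cap V_1\to V_2$ and $B:=T|_{V_2}\colon \Dom T\cap V_2\to V_3$. By \eqref{eq:range1} these are closed, densely defined operators between the Hilbert spaces $V_i$, density being implicit in the hypothesis that $A^*=T^*|_{V_2}$ and $B^*=T^*|_{V_3}$ make sense; moreover $T^2=0$ forces $\Ra(T)\subset\Dom T$, so the compositions $BA$ and $A^*B^*$ are legitimate and vanish (the latter because $(T^*)^2=0$). Applying Lemma~\ref{lem:neumann} to $A$ and to $B$, both with underlying Hilbert space $H=V_2$, the operators $\id_{V_2}+AA^*$ and $\id_{V_2}+B^*B$ have bounded self-adjoint inverses. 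Since $Q=AA^*+B^*B$ on the indicated domain, the computation in the proof of Theorem~\ref{thm:self-adjoint} applies verbatim once one replaces the pair $(TT^*,T^*T)$ by $(AA^*,B^*B)$ and the relations $T^2=0$, $(T^*)^2=0$ by $BA=0$, $A^*B^*=0$; it yields
$$(\id_{V_2}+Q)\bigl((\id_{V_2}+AA^*)^{-1}+(\id_{V_2}+B^*B)^{-1}-\id_{V_2}\bigr)=\id_{V_2},$$
so $\id_{V_2}+Q$ is surjective. Being symmetric and surjective it is self-adjoint, hence so is $Q$, which is therefore also closed and densely defined, with $(Qu,u)=\|Tu\|^2+\|T^*u\|^2\geq0$.

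For the decomposition of $V_2$, observe that $\Ra(A)\perp\Ra(B^*)$, since $\langle Av_1,B^*v_3\rangle=\langle BAv_1,v_3\rangle=0$, and that $\Ra(A)$ and $\Ra(B)$ — hence also $\Ra(B^*)$ — are closed by assumption; together with $\Ra(A)\subset\ker B$ and $\Ra(B^*)\subset\ker A^*$ this gives the orthogonal decomposition $V_2=\bigl(\ker T|_{V_2}\cap\ker T^*|_{V_2}\bigr)\oplus\Ra(T|_{V_1})\oplus\Ra(T^*|_{V_3})$, and $(Qu,u)=\|Tu\|^2+\|T^*u\|^2$ identifies $\ker Q$ with the first summand. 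For the estimate, let $u\in\Dom T\cap\Dom T^*$ with $u\perp\ker Q$, and split $u=u_A+u_B$ with $u_A\in\Ra(A)$, $u_B\in\Ra(B^*)$; since $\Ra(B^*)\subset\ker A^*\subset\Dom A^*$ and $\Ra(A)\subset\ker B\subset\Dom B$, one gets $u_A\in\Dom A^*$ with $A^*u_A=T^*u$ and $u_B\in\Dom B$ with $Bu_B=Tu$, and closedness of $\Ra(A)$ and $\Ra(B)$ yields constants with $\|u_A\|\leq C_A\|T^*u\|$ and $\|u_B\|\leq C_B\|Tu\|$. Hence $c\|u\|^2\leq\|Tu\|^2+\|T^*u\|^2$ with $c:=\min\{C_A^{-2},C_B^{-2}\}$, which is the first inequality of \eqref{eq:apriori}; combined with $(Qu,u)=\|Tu\|^2+\|T^*u\|^2\leq\|Qu\|\,\|u\|$ for $u\in\Dom Q$ it gives the second. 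A Cauchy-sequence argument on $\Dom Q\cap(\ker Q)^\perp$, using that $Q$ is closed and that $\ker Q$ reduces $Q$, then shows $\Ra(Q)$ is closed, so $\Ra(Q)=(\ker Q)^\perp$; this is \eqref{eq:decomp1}, and combined with the orthogonal decomposition above it is \eqref{eq:decomp2}.

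I expect the main obstacle to be the bookkeeping for the restricted operators: verifying that the summands $u_A,u_B$ of an element $u$ again lie in $\Dom A^*$ respectively $\Dom B$ (so that $A^*u_A=A^*u$ and $Bu_B=Bu$ are legitimate), and checking that the hypotheses $T|_{V_1}^*=T^*|_{V_2}$ and $T|_{V_2}^*=T^*|_{V_3}$ are exactly what is needed both to identify $A^*,B^*$ with restrictions of $T^*$ and to pin down $\Dom Q$ as the stated set. Once this is settled, everything else is a routine repetition of the Hilbert-space manipulations already carried out for Theorem~\ref{thm:self-adjoint}.
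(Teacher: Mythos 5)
Your proposal is correct, and its overall architecture (von Neumann's lemma for self-adjointness, the Hodge-type orthogonal decomposition of $V_2$, and bounded solution operators from the closed-range hypothesis for the a priori estimates) matches the paper's. Two sub-steps are handled by a different, equally valid route. For self-adjointness of $Q$ you rerun the von Neumann computation directly on the truncated complex $A=T|_{V_1}$, $B=T|_{V_2}$, using $BA=0$ and $A^*B^*=0$ in place of $T^2=(T^*)^2=0$; the paper instead reuses the already-proved self-adjointness of the global $P$ (Theorem \ref{thm:self-adjoint}) by showing that $\Ra(P|_{V_2})\subset V_2$ and $\Ra(P|_{V_2^\perp})\subset V_2^\perp$, so that $P=P|_{V_2}\oplus P|_{V_2^\perp}$ and each summand inherits self-adjointness. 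Your route is more self-contained but carries the domain bookkeeping you correctly flag (identifying $Q$ with $AA^*+B^*B$, and checking $\Ra(A)\subset\Dom B$, $\Ra(B^*)\subset\Dom A^*$); the paper's is shorter but leans on the reduction argument. For closed range of $Q$ you use the a priori estimate $c\|u\|\leq\|Qu\|$ on $\Dom Q\cap(\ker Q)^\perp$ together with closedness of $Q$ (a Cauchy-sequence argument); the paper instead solves $Qg=u$ explicitly for each $u$ in $\Ra(T|_{V_1})$ and in $\Ra(T^*|_{V_3})$ by composing the two solution operators ($u=Tf$, $f=T^*g$), which exhibits $\Ra(Q)=(\ker Q)^\perp$ directly without invoking the estimate. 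Both arguments are standard and your version introduces no circularity, since you prove the estimate on all of $(\ker Q)^\perp=\overline{\Ra(Q)}$ before using it.
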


\begin{proof}
By assumption,
\begin{eqnarray}\label{eq:range2}
\Ra(T^*|_{V_2}) = \Ra(T|_{V_1}^*) \subset V_1,\ \Ra(T^*|_{V_3})=\Ra(T|_{V_2}^*)\subset V_2,
\end{eqnarray}
and all these ranges are closed (see e.g. \cite{KK}, Proposition A.1.2).
\eqref{eq:range1} and \eqref{eq:range2} together imply that
\begin{eqnarray}\label{eq:range3}
\Ra(P|_{V_2}) \subset V_2.
\end{eqnarray}
It is clear that $Q=P|_{V_2}: V_2 \rightarrow V_2$ is closed and densely defined.

Let $V_2^\perp$ be the orthogonal complement of $V_2$ in $L^2_*(X,\varphi)$
and $u\in V_2^\perp\cap\Dom P$. Then
$$(Pu,v)=(u,Pv)=0$$
for all $v\in V_2\cap\Dom P$. As $\Dom P$ is dense in $V_2$,
this yields
$$\Ra(P|_{V_2^\perp}) \subset V_2^\perp.$$
It follows with \eqref{eq:range3} that
$$P=P|_{V_2}\oplus P|_{V_2^\perp}: V_2\oplus V_2^\perp \rightarrow V_2\oplus V_2^\perp.$$
Since $P$ is self-adjoint, it follows that both operators $Q=P|_{V_2}$ and $P|_{V_2^\perp}$ are self-adjoint.

Since $\Ra(T|_{V_1})$ is a closed subspace of $V_2$, we get the orthogonal decomposition
$$V_2 = \ker T|_{V_1}^* \oplus \Ra(T|_{V_1}).$$
On the other hand,
$$\Ra(T|_{V_2}^*) = \Ra(T^*|_{V_3}) \subset \ker T^*|_{V_2} = \ker T|_{V_1}^*$$
implies the orthogonal decomposition
$$\ker T_{V_1}^* = \left(\ker T_{V_1}^* \cap \ker T|_{V_2}\right) \oplus \Ra(T|_{V_2}^*).$$
Together, we obtain \eqref{eq:decomp2}. Since $Q$ is self-adjoint, we also have the
orthogonal decomposition $V_2= \ker Q\oplus\o{\Ra(Q)}$. Hence
$$\o{\Ra(Q)} = \Ra(T|_{V_1}) \oplus \Ra(T^*|_{V_3}).$$
To show that the range of $Q$ is closed, let $u\in\Ra(T|_{V_1})$.
Then $u = Tf$ with $f\in (\ker T|_{V_1})^\perp = \Ra(T^*|_{V_2})$.
So, $f=T^* g$ with $g\in (\ker T^*|_{V_2})^\perp = \Ra(T|_{V_1})$.
Hence, $g\in\Dom Q$ and
$$u = TT^* g = Qg.$$
Analogously, if $u\in\Ra(T^*|_{V_3})$, then there exists $g\in\Dom Q$
such that
$$u = T^* Tg=Qg.$$
This shows that $Q$ has closed range and \eqref{eq:decomp1} holds.

To prove the two estimates, we follow \cite{LM}, Theorem V.6.2.
First, We construct bounded solution operators for $T$ and $T^*$.
We elaborate that for $T$, the case of $T^*$ is analogous.
Let
$$L=\{u\in \Dom T|_{V_2}: u \perp \ker T\}$$
be the Banach space with the norm
$$\|u\|_L^2 := \|u\|^2 + \|Tu\|^2.$$
So, the mapping
$$A: L \rightarrow \Ra(T|_{V_2}),\ u\mapsto Tu,$$
is a bounded linear isomorphism. Therefore,
$$A^{-1}: \Ra(T|_{V_2}) \rightarrow L$$
is an $L^2$-bounded solution operator for $T$. Analogously,
let
$$B^{-1}: \Ra(T^*|_{V_2}) \rightarrow \{u\in\Dom T^*|_{V_2}: u\perp \ker T^* \}$$
be the corresponding $L^2$-bounded solution operator for $T^*$.

So, let
$u\in\Dom T\cap\Dom T^*\cap \Ra(Q)$.
Then
$$u=u_1 + u_2 \in \Ra(T|_{V_1})\oplus \Ra(T^*|_{V_3}).$$
Clearly,
$$u_1\in \Dom T^*,\ u_2 \in \Dom T,\ T^* u_1=T^* u,\ T u_2= Tu.$$
By our previous considerations,
$$u_1 = B^{-1} T^* u\ \ \mbox{ and }\ \ u_2=A^{-1} T u.$$
The continuity of $A^{-1}$ and $B^{-1}$ yields that
$$\|u\|^2 = \|u_1\|^2 + \|u_2\|^2 \leq C\left(\|T^*u\|^2 + \|Tu\|^2\right)$$
with a constant $C>0$ independent of $u$.

For $u\in \Dom Q\cap \Ra(Q)$, the second inequality follows easily:
\begin{eqnarray*}
C^{-1} \|u\|^2 \leq \|T^*u\|^2 + \|Tu\|^2 = (Qu,u) \leq \|Qu\| \|u\|.
\end{eqnarray*}
\end{proof}

\newpage

Under the assumptions of Lemma \ref{lem:Q}
we can construct the Green operator to $Q$ analogously to the construction
of the solution operators for $T$ and $T^*$ in the proof of Lemma \ref{lem:Q}.
Let
$$L = \{u\in\Dom Q: u\perp \ker Q\} = \{u\in\Dom Q: u\in \Ra(Q)\}$$
be the Banach space with the norm
$$\|u\|_L^2 := \|u\|^2 + \|Qu\|^2.$$
Note that $\|u\|_L \lesssim \|Qu\|$ by \eqref{eq:apriori}.
So, the mapping
$$Q|_L: L \rightarrow \Ra(Q)$$
is a bounded linear isomorphism. Hence,
$$Q|_L^{-1}: \Ra(Q) \rightarrow L \subset \Dom(Q)$$
is an $L^2$-bounded solution operator for $Q$.
We extend $Q|_L^{-1}$ to an operator
\begin{eqnarray}\label{eq:Green}
Q^{-1}: V_2 \rightarrow \Dom(Q)
\end{eqnarray}
by setting $Q^{-1}u=0$ for $u\in\ker Q$. $Q^{-1}$ is called the {\bf Green operator} associated to $Q$.
The main objective of the present section is to study necessary 
and sufficient conditions for compactness of $Q^{-1}$.

We need another useful representation of $Q^{-1}$ which goes back to E. Straube
in case of the complex Green operator (i.e. the $\dq$-Neumann operator)
on pseudoconvex domains in $\C^n$ (see \cite{S}, Theorem 2.9).

From now on, let
$$D := \{u\in V_2: u\in\Dom T\cap\Dom T^*\cap\Ra(Q)\}$$
be the Hilbert space with 
$$(u,v)_D := (Tu,Tv)_{L^2} + (T^*u,T^*v)_{L^2},$$
and
$$j: D \hookrightarrow V_2$$
the injection into $V_2$ which is bounded by \eqref{eq:apriori}. Let
$$j^*: V_2 \rightarrow D$$
be the adjoint operator. Then $Q^{-1}= j\circ j^*$ as we will show now.
Let 
$$u=u_1 + u_2 \in V_2 = \ker Q \oplus \Ra(Q).$$
Then:
$$(u,v)_{L^2} = (u,jv)_{L^2} = (j^* u,v)_D$$
for all $v\in D$. On the other hand,
$$(u,v)_{L^2} = (u_2,v)_{L^2} = (QQ^{-1} u_2,v)_{L^2} = (Q^{-1}u_2,v)_D = (Q^{-1} u,v)_D$$
for all $v\in D$. Hence, if $Q^{-1}$ is interpreted as an operator to $D$, then $Q^{-1}=j^*$.
It follows that
\begin{eqnarray}\label{eq:Q-1}
Q^{-1}=j\circ j^*: V_2 \rightarrow V_2.
\end{eqnarray}

\newpage

We will now characterize compactness of $Q^{-1}$ under the assumption that
$T\oplus T^*$ is elliptic
in the interior of $X$ in the sense that the G\r{a}rding inequality holds on
relatively compact subsets of $X$: for each bounded open subset $\Omega\subset\subset X$
there exists a constant $C_\Omega>0$ such that
\begin{eqnarray}\label{eq:garding1}
\|u\|^2_{W^{1,2}_*(\Omega,\varphi)} \leq C_{\Omega} \left( \|u\|^2_{L^2_*(\Omega,\varphi)} + \|Tu\|^2_{L^2_*(\Omega,\varphi)} + \|T^* u\|^2_{L^2_*(\Omega,\varphi)}\right)
\end{eqnarray}
for all $u\in C^\infty_{*,cpt}(\Omega)$. The Sobolev-norm $W^{1,2}$ is well defined on $\Omega$ for $\Omega\subset\subset X$.

Natural choices for $T$ are closed extensions of the operators $\dq_{cpt}$, $\partial_{cpt}$ or $d_{cpt}$ 
acting on smooth forms with compact support in $X$,
for example $T=\dq_w$, the $\dq$-operator in the sense of distributions (the maximal closed $L^2$-extension of $\dq_{cpt}$),
or $T=\dq_s$, the $\dq$-operator in the sense of approximation
by smooth forms with compact support (the minimal closed $L^2$-extension of $\dq_{cpt}$). 
In both cases, it is well-known that the G\r{a}rding
inequality \eqref{eq:garding1} holds (see e.g. \cite{FK}, Theorem 2.2.1).

One important step in the characterization of compactness of the Green operator is the following observation
which we present separately for later use:

\begin{lem}\label{lem:i}
Let $V_1, V_2, V_3$ as in Lemma \ref{lem:Q},
assume that the G\r{a}rding inequality \eqref{eq:garding1} is satisfied on open subsets $\Omega\subset\subset X$
for all $u\in C^\infty_{cpt}(\Omega)\cap V_2$, and that $V_2$ is closed under multiplication with smooth compactly supported functions.

Let $k>0$,
$$\|u\|_G^2 = \|u\|^2_{L^2} + \|Tu\|^2_{L^2} + \|T^* u\|^2_{L^2}$$
and
$$\mathcal{K} = \{u\in V_2: u\in\Dom(T)\cap\Dom(T^*), \|u\|^2_G<k\}.$$
Then: for all $\epsilon>0$ and all $\Omega\subset \subset X$, there exists $\delta>0$ such that
\begin{eqnarray*}
\|\Phi^* u -u\|_{L^2} <\epsilon
\end{eqnarray*}
for all $u\in\mathcal{K}$ and all $\Phi\in\Def_\delta(\Omega,X)$, i.e. $\mathcal{K}$ satisfies the first
condition of the criterion Theorem \ref{thm:precompact}.
\end{lem}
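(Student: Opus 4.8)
The plan is to reduce the estimate $\|\Phi^*u - u\|_{L^2} < \epsilon$ (uniformly over $u\in\mathcal K$ and $\Phi\in\Def_\delta(\Omega,X)$) to the interior elliptic regularity supplied by the Gårding inequality \eqref{eq:garding1}. The key point is that a $\delta$-deformation $\Phi$ of $\Omega$ is the identity outside $\Omega$, so $\Phi^*u - u$ is supported in $\o\Omega$; moreover, the family $\mathcal K$ is bounded in the graph norm $\|\cdot\|_G$, and by \eqref{eq:garding1} this gives a uniform $W^{1,2}(\Omega')$ bound for $u\in\mathcal K$ on any slightly larger relatively compact $\Omega'\subset\subset X$ with $\Omega\subset\subset\Omega'$. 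The mechanism for converting a uniform $W^{1,2}$ bound into smallness of $\|\Phi^*u-u\|$ as $\delta\to0$ is exactly the standard fact that translations (here: the deformation path) move a form by little in $L^2$ if its first derivatives are $L^2$-controlled.

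First I would fix $\Omega\subset\subset X$, choose $\Omega'$ with $\o\Omega\subset\Omega'\subset\subset X$, and pick a cut-off $\rho\in C^\infty_{cpt}(\Omega')$ with $\rho\equiv1$ on a neighborhood of $\o\Omega$; since $V_2$ is closed under multiplication with smooth compactly supported functions, $\rho u\in V_2$, and a Leibniz computation shows $\|\rho u\|_G \lesssim_{\rho}\|u\|_G$, hence $\{\rho u: u\in\mathcal K\}$ is bounded in the graph norm on $\Omega'$. Next, applying the Gårding inequality \eqref{eq:garding1} on $\Omega'$ (using a Friedrichs/mollifier approximation to pass from $C^\infty_{cpt}(\Omega')\cap V_2$ to $\rho u$, which has compact support in $\Omega'$) yields a uniform bound $\|\rho u\|_{W^{1,2}(\Omega',\varphi)} \leq C$ for all $u\in\mathcal K$, with $C$ depending only on $\Omega'$, $\rho$, $\varphi$ and $k$. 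On $\Omega$ itself this says $\{u|_\Omega : u\in\mathcal K\}$ is bounded in $W^{1,2}_*(\Omega,\varphi)$.

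Then, for $\Phi\in\Def_\delta(\Omega,X)$ and $u\in\mathcal K$, I would write, using a partition into finitely many coordinate charts covering $\o\Omega$ (legitimate by Lemma \ref{lem:trafo}), and the deformation path $\Phi_t$ from Definition \ref{defn:Def},
\begin{eqnarray*}
\Phi^*u(x) - u(x) = \int_0^1 \frac{d}{dt}\big(\Phi_t^*u\big)(x)\, dt,
\end{eqnarray*}
and estimate the integrand in $L^2$: the derivative of $\Phi_t^*u$ combines (a) the first derivatives of $u$, pulled back along $\Phi_t$ and multiplied by $\partial_t\Phi_t$, which is $\leq 3\delta$ by \eqref{eq:dt}, and (b) the derivative of the Jacobian coefficients of $\Phi_t^*$, which by $\|(\Phi_t)_*-\id\|_\infty<3\delta$ are within $O(\delta)$ of constant and whose $t$-derivatives are $O(\delta)$ as well. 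Combining with \eqref{eq:3delta} (to control the change of variables factors by $(1+3\delta)^{2(p+q+n)}$) and the uniform $W^{1,2}(\Omega,\varphi)$ bound from the previous step, one obtains
\begin{eqnarray*}
\|\Phi^*u - u\|_{L^2} \leq C'\,\delta\,\|u\|_{W^{1,2}_*(\Omega,\varphi)} \leq C''\delta
\end{eqnarray*}
for a constant $C''$ independent of $u\in\mathcal K$ and of $\Phi$, which is $<\epsilon$ once $\delta$ is small enough. This gives condition (i) of Theorem \ref{thm:precompact}.

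The main obstacle I expect is the careful bookkeeping in the last step: differentiating $\Phi_t^*u$ produces terms in which $u$'s derivatives are evaluated at the moving point $\Phi_t(x)$, so to bound these in $L^2(\Omega)$ one must change variables back by $\Phi_t^{-1}$ and use that $\Phi_t^{-1}\in\Var_\delta(\Omega,X)$ too (Definition \ref{defn:Def}), together with the uniform continuity of $\varphi$ on $\o\Omega$ exactly as in the proof of Lemma \ref{lem:deformation1}. One must also handle the fact that $u$ is only in $W^{1,2}$, not smooth, so the identity $\Phi^*u - u = \int_0^1 \partial_t(\Phi_t^*u)\,dt$ and the $L^2$-Minkowski-integral estimate should first be justified for smooth $u$ and then extended by density — using that the estimate's right-hand side depends on $u$ only through $\|u\|_{W^{1,2}(\Omega)}$, so it passes to the limit. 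Everything else is the routine coordinate-patch/Lemma \ref{lem:trafo} reduction already used in the proof of Theorem \ref{thm:precompact}.
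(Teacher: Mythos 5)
Your proposal is correct and follows essentially the same route as the paper's proof: cut off near $\o{\Omega}$ using that $\Phi$ is the identity outside $\Omega$ and that $V_2$ is stable under multiplication by $C^\infty_{cpt}$ functions, integrate $\partial_t(\Phi_t^*u)$ along the deformation path using \eqref{eq:dt} and \eqref{eq:3delta} to get $\|\Phi^*u-u\|_{L^2}\lesssim \delta\,\|\chi u\|_{W^{1,2}}$, and close with the G\r{a}rding inequality \eqref{eq:garding1} applied to the (smoothly approximated) cut-off form. The only differences are cosmetic ordering of the steps, so there is nothing further to add.
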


\begin{proof}
Let $\epsilon>0$ and $\Omega\subset\subset X$.
Fix $\Omega\subset\subset \Omega_1 \subset\subset\Omega_2 \subset\subset X$
and $\chi\in C^\infty_{cpt}(\Omega_2,\R)$, $0\leq\psi\leq 1$, a cut-off function such that $\chi\equiv 1$ on $\Omega_1$.
Then
\begin{eqnarray}\label{eq:equal}
\|\Phi^* u - u\|_{L^2} = \|\Phi^* (\chi u) -\chi u\|_{L^2}
\end{eqnarray}
since $\Phi|_{X-\Omega}$ is just the identity mapping for all $\Phi\in \Def_\delta(\Omega,X)$.
Since multiplication with $\chi$ preserves $V_2$ and the domains of $T$ and $T^*$,
$$\chi u \in\Dom(T)\cap \Dom(T^*)\cap V_2$$
and there exists a constant $C_\chi>0$ such that
\begin{eqnarray}\label{eq:chi}
\|\chi u\|^2_{L^2} + \|T(\chi u)\|^2_{L^2} + \|T^*(\chi u)\|^2_{L^2} = \|\chi u\|_G^2 \leq C_\chi k%\ \ \mbox{ for all } u\in\mathcal{L}.
\end{eqnarray}
for all $u\in\mathcal{K}$.
By the same argument, we can use a partition of unity subordinate to a finite covering of $\Omega_2$
by coordinate charts to achieve that the $\chi u$ are supported in coordinate charts.
So, we can assume that $\Omega_2$ is a bounded domain in $\C^n$ (taking Lemma \ref{lem:trafo} into account).

Since $\chi u$ has compact support in $\Omega_2$, it can be approximated by smooth compactly supported forms in the $L^2$-sense
such that arbitrary partial derivatives (up to a certain order) converge as well in the $L^2$-sense
(making the $\|\cdot\|_G$-norm converge).
So, we can assume that the $\chi u$ are smooth with compact support in $\Omega_2$ because on the other hand
$$\|\Phi^* u -\Phi^* \wt{u}\|_{L^2} \leq 2 \|u-\wt{u}\|_{L^2}$$
if only $\delta<\delta_0(\Omega)$ for some fix $\delta_0(\Omega)>0$ (see \eqref{eq:def1} in the proof of Lemma \ref{lem:deformation1}).

To simplify the notation, let $v=\chi u$. 
As $|\frac{\partial}{\partial t}\Phi_t| \leq 3 \delta$
for all $\Phi\in \Def_\delta(\Omega,X)$ by Definition \ref{defn:Def},
\begin{eqnarray*}%\label{eq:est1}
|\Phi^* v(z) - v(z)| &=& |v(\Phi_1(z)) - v(\Phi_0(z))|\\
&\leq& \int_0^1 |v|_1 (\Phi_t(z)) \left|\frac{\partial}{\partial t} \Phi_t(z)\right|dt\\
&\leq& 3 \delta \int_0^1 |v|_1 (\Phi_t(z)) dt,
\end{eqnarray*}
where $|v|_1$ denotes the pointwise norm of all derivatives of first order of all coefficients of $v$.
Since $\|(\Phi_t)_*\|_\infty, \|(\Phi_t^{-1})_*\|_\infty <1+3\delta$, it follows as in the proof of Lemma \ref{lem:deformation1}
that
\begin{eqnarray*}
\|\Phi^*v -v \|^2_{L^2} &\leq & 9 \delta^2 \int_{\Omega} \left(\int_0^1 |v|_1 (\Phi_t(z)) dt\right)^2 e^{-\varphi(z)} dV_X(z)\\
&\leq& 9 \delta^2 \int_0^1 \left( \int_{\Omega} \Phi_t^* |v|_1^2 e^{-\varphi} dV_X \right) dt\\
&\leq& 9 \delta^2 \int_0^1 (1+3\delta)^{3n} \sup_{z\in\Omega}\big| e^{\varphi(z) - \varphi(\Phi_t^{-1}(z))}\big| \int_{\Omega} |v|_1^2 e^{-\varphi} dV_X dt.
\end{eqnarray*}
Since $\varphi\in C^0(X)$ is uniformly continuous on compact subsets of $X$,
there exists a constant
$$C_\varphi = \sup_{t\in[0,1]} \sup_{z\in\Omega_2} \big| e^{\varphi(z) - \varphi(\Phi_t^{-1}(z))}\big| < \infty.$$
So, we get
\begin{eqnarray*}%\label{eq:final1}
\|\Phi^* v -v \|^2_{L^2} \leq 9 \delta^2 (1+3\delta)^{3n} C_\varphi \|v\|^2_{W^{1,2}(\Omega,\varphi)}.
\end{eqnarray*}
It follows with \eqref{eq:equal} that there exists a constant $C(\Omega,\chi,\varphi)>0$
such that
\begin{eqnarray*}
\|\Phi^* u -u \|^2_{L^2} = \|\Phi^*(\chi u) - \chi u\|^2_{L^2}
\leq C(\Omega,\chi,\varphi) \delta^2 \|\chi u\|^2_{W^{1,2}(\Omega_2,\varphi)}
\end{eqnarray*}
for all $u\in \mathcal{K}$ and all $\Phi\in Def_\delta(\Omega,X)$.
This clearly is now the place to use the assumption that $T\oplus T^*$ is elliptic
in the sense of \eqref{eq:garding1}. Recall that we can assume that $\chi u$ is smooth with compact support
in $\Omega_2$. Hence there exists a constant $C_{\Omega_2}>0$ such that
\begin{eqnarray*}
\|\chi u\|^2_{W^{1,2}(\Omega_2,\varphi)} &\leq& C_{\Omega_2}
 \left( \|\chi u\|^2_{L^2_*(\Omega_2,\varphi)} + \|T(\chi u)\|^2_{L^2_*(\Omega_2,\varphi)} + \|T^* (\chi u)\|^2_{L^2_*(\Omega_2,\varphi)}\right)\\
&=& C_{\Omega_2} \|\chi u\|^2_G
\end{eqnarray*}
for all $u\in\mathcal{K}$. With \eqref{eq:chi}, we arrive finally at
$$\|\Phi^* u -u \|^2_{L^2} \leq \delta^2 C(\Omega,\chi,\varphi) C_{\Omega_2} C_\chi k.$$
Therefore,
$\|\Phi^* u -u\|^2_{L^2} <\epsilon$
for all $u\in\mathcal{K}$ and all $\Phi\in Def_\delta(\Omega,X)$ if $\delta$ is small enough.
\end{proof}

It is now easy to give a necessary and sufficient condition for compactness of the Green operator.
The criterion is inspired by the work of Gansberger \cite{G} who treats domains in $\C^n$. 
Part of his criterion goes back to an earlier work of Haslinger (see \cite{H}).

We restrict our attention to a Hermitian submanifold $X$ of a Hermitian complex space $Z$
in order to get an easy treatable notion of the boundary $bX$ of $X$.

\begin{thm}\label{thm:compact}
Let $Z$ be a Hermitian complex space,
$X\subset Z$ an open Hermitian submanifold in $Z$, and $T$ a linear partial differential operator
acting on $\Dom(T)\subset L^2_*(X,\varphi) \rightarrow L^2_*(X,\varphi)$ 
which is densely defined, closed, elliptic in the interior of $X$ and satisfies $T^2=0$.
By ellipticity, we understand that the G\r{a}rding inequality \eqref{eq:garding1} holds on each relatively
compact subset of $X$. Let
$$P= T^*T + TT^*.$$
Assume that there are closed subspaces $V_1, V_2, V_3 \subset L^2_*(X,\varphi)$
such that the assumptions of Lemma \ref{lem:Q} are satisfied, hence
$$Q=P|_{V_2}: \Dom Q\subset V_2 \rightarrow V_2$$
is self-adjoint with closed range, and let
$$D=\{u\in V_2: u\in \Dom(T)\cap\Dom(T^*)\cap\Ra(Q)\}.$$
Assume that $V_2$ is closed under multiplication with smooth compactly supported functions.
Then the following conditions are equivalent:

(i) The Green operator $Q^{-1}: V_2 \rightarrow V_2$ is compact.

(ii) The injection $j$ of $D$ equipped with the graph norm $\|u\|_D^2=\|Tu\|^2_{L^2}+\|T^*u\|^2_{L^2}$
into $L^2_*(X,\varphi)$ is compact.

(iii) For all $\epsilon>0$, there exists $\Omega\subset\subset X$ such that $\|u\|_{L^2_*(X-\Omega,\varphi)}<\epsilon$
for all $u\in \mathcal{L}=\{u\in D: \|u\|_D<1\}.$

(iv) There exists  a smooth function $\psi\in C^\infty(X,\R)$, $\psi>0$, such that $\psi(z)\rightarrow \infty$ as $z\rightarrow bX$,
and
\begin{eqnarray*}
( Qu,u)_{L^2} \geq \int_X \psi |u|^2 e^{-\varphi} dV_X\ \ \mbox{ for all } u\in\Dom(Q)\cap\Ra(Q).
\end{eqnarray*}
\end{thm}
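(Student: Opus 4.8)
The plan is to prove the cycle of implications $(i)\Rightarrow(ii)\Rightarrow(iii)\Rightarrow(i)$ for the first three conditions, and then close the loop with $(iii)\Leftrightarrow(iv)$; the pattern is the standard one for compactness-of-Neumann-operator theorems (as in Straube, Gansberger, Haslinger), but here everything must be phrased intrinsically on the Hermitian submanifold $X$ and the abstract operator $T$. First I would observe that $(i)\Leftrightarrow(ii)$ is essentially formal once we have the factorization $Q^{-1}=j\circ j^*$ from \eqref{eq:Q-1}: compactness of $j$ immediately gives compactness of $j^*$ and hence of $j\circ j^*=Q^{-1}$; conversely, if $Q^{-1}$ is compact, then $j^*j=(j^*j)$ acting on $D$ equals $Q^{-1}$ composed appropriately, so $j^*j$ is compact on $D$, and then $j$ itself is compact since $\|ju\|_{L^2}^2=(j^*ju,u)_D$ and a bounded sequence in $D$ has a subsequence along which $j^*ju$ converges, forcing $\|ju_k-ju_l\|^2=(j^*j(u_k-u_l),u_k-u_l)_D\to 0$. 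This is a short functional-analytic argument that I would spell out carefully using \eqref{eq:Q-1}.

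Next, $(ii)\Rightarrow(iii)$ is immediate: if $j:D\hookrightarrow L^2$ is compact, then the image $j(\mathcal{L})$ of the unit ball $\mathcal{L}=\{u\in D:\|u\|_D<1\}$ is precompact in $L^2_*(X,\varphi)$, so condition (ii) of the Riesz-type criterion Theorem \ref{thm:precompact} holds for it, which is exactly statement $(iii)$ (a uniform decay-at-the-boundary estimate). The real content is $(iii)\Rightarrow(i)$, or equivalently $(iii)\Rightarrow(ii)$. Here I would apply Theorem \ref{thm:precompact} to the set $\mathcal{A}=j(\mathcal{L})\subset L^2_*(X,\varphi)$: condition (ii) of that theorem is precisely our hypothesis $(iii)$, and condition (i) of that theorem — invariance of the $L^2$-norm under small $\delta$-deformations, uniformly over $\mathcal{A}$ — is supplied by Lemma \ref{lem:i}, since $\mathcal{L}$ is (a rescaling of) the set $\mathcal{K}$ appearing there (note $\|u\|_D^2$ and $\|u\|_G^2$ differ only by the term $\|u\|_{L^2}^2$, which is bounded on $\mathcal{L}$ by \eqref{eq:apriori}, so membership in a fixed $\mathcal{K}$ follows). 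This is where ellipticity of $T\oplus T^*$ in the interior is used, through Lemma \ref{lem:i}. Hence $\mathcal{A}$ is precompact, i.e. $j$ is compact, giving $(ii)$ and therefore $(i)$.

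Finally, for $(iii)\Leftrightarrow(iv)$ I would argue as follows. For $(iv)\Rightarrow(iii)$: given $u\in D\cap\Ra(Q)$ with $\|u\|_D<1$ we have $(Qu,u)_{L^2}=\|u\|_D^2<1$; but $(Qu,u)_{L^2}$ makes sense for general $u\in D$ only after approximation, so I would first note $D\cap\Dom(Q)$ is dense in $D$ and that the inequality $\int_X\psi|u|^2 e^{-\varphi}dV_X\le\|u\|_D^2$ extends to all of $D$ by Fatou/density (using $\psi\ge 0$); then for $\epsilon>0$ choose $\Omega\subset\subset X$ so large that $\psi\ge 1/\epsilon^2$ on $X-\Omega$ (possible since $\psi\to\infty$ at $bX$ and sublevel sets $\{\psi\le c\}$ are relatively compact in $X$), whence $\|u\|^2_{L^2_*(X-\Omega,\varphi)}\le\epsilon^2\int_{X-\Omega}\psi|u|^2 e^{-\varphi}dV_X\le\epsilon^2$. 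For the converse $(iii)\Rightarrow(iv)$, which is the main obstacle, I would use $(iii)$ to produce an exhaustion $\Omega_1\subset\subset\Omega_2\subset\subset\cdots$ with $\|u\|^2_{L^2_*(X-\Omega_k,\varphi)}<4^{-k}$ for all $u\in\mathcal{L}$, then build $\psi$ as a locally finite sum $\psi=\sum_k a_k\chi_k$ with $\chi_k$ smooth cut-offs that are $1$ off $\Omega_{k+1}$ and supported off $\Omega_k$ (so at most finitely many overlap on each compact set, making $\psi$ smooth), with coefficients $a_k\nearrow\infty$ chosen slowly enough — say $a_k\sim 2^{k/2}$ — that $\int_X\psi|u|^2e^{-\varphi}\le\sum_k a_k\|u\|^2_{L^2(X-\Omega_k,\varphi)}\le\sum_k 2^{k/2}4^{-k}$ stays bounded by $\|u\|_D^2$ after rescaling (by homogeneity in $u$ it suffices to control the sum on the unit ball $\mathcal{L}$ and then multiply through). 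The delicate points are arranging smoothness and strict positivity of $\psi$ while still forcing $\psi\to\infty$, and matching the growth rate of $a_k$ to the decay rate $4^{-k}$ from the exhaustion; these are the parts I expect to require the most care, though they are by now fairly standard in the Neumann-compactness literature.
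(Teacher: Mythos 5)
Your proposal is correct and follows essentially the same route as the paper: $(i)\Leftrightarrow(ii)$ via $Q^{-1}=j\circ j^*$ and the polarization identity, the decay condition $(iii)$ extracted from Theorem \ref{thm:precompact}, the weight $\psi$ built from a geometrically decaying exhaustion, and precompactness recovered by combining the decay condition with Lemma \ref{lem:i} (using \eqref{eq:apriori} to place $\mathcal{L}$ inside a set $\mathcal{K}$). The only cosmetic difference is that you close the cycle as $(iii)\Rightarrow(ii)$ directly and treat $(iii)\Leftrightarrow(iv)$ separately, whereas the paper runs $(ii)\Rightarrow(iii)\Rightarrow(iv)\Rightarrow(ii)$; the ingredients are identical.
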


\begin{proof}
First, we observe that (i) is equivalent to (ii). Since
$$Q^{-1}=j\circ j^*: V_2 \rightarrow V_2$$
by \eqref{eq:Q-1}, the assertion descends to the fact that a bounded operator
$S$ is compact exactly if $S^*$ is compact (see \cite{Rd}, Theorem 4.19), 
and $SS^*$ is compact exactly if $S$ and $S^*$ are compact (use $(S^*Sx,x)=(Sx,Sx)$).\\

We will now show that (ii) $\Rightarrow$ (iii) $\Rightarrow$ (iv) $\Rightarrow$ (ii). \newline
Assume that $j: D \hookrightarrow L^2_*(X,\varphi)$ is compact.
Then $j(\mathcal{L})$ is precompact in $L^2_*(X,\varphi)$ and (iii) holds by Theorem \ref{thm:precompact}.

If (iii) holds, it follows by linearity of $T\oplus T^*$ that for all $\epsilon>0$ there exists a domain
$\Omega_\epsilon\subset\subset X$ such that
\begin{eqnarray}\label{eq:ii1}
\|u\|^2_{L^2_*(X-\Omega_\epsilon,\varphi)} \leq \epsilon^2 \|u\|_D^2
\end{eqnarray}
for all $u\in D$. For such $u$, we have
\begin{eqnarray}\label{eq:ii2}
\int_X |u|^2 e^{-\varphi} dV_X \leq c^{-1} \|u\|^2_D
\end{eqnarray}
by \eqref{eq:apriori}, and for $k\in\N$, $k\geq 1$:
\begin{eqnarray}\label{eq:ii3}
\int_{X - \Omega_{2^{-k}}} 2^k |u|^2 e^{-\varphi} dV_X \leq 2^k \cdot 2^{-2k} \|u\|^2_D = 2^{-k} \|u\|^2_D
\end{eqnarray}
by \eqref{eq:ii1}. So, let $\psi'\in C^\infty(X,\R)$ be a real-valued smooth function such that
$$\left.\begin{array}{l}
\psi' \leq 2^{k}\\
\psi' \geq 2^{k-1}
\end{array}\right\} \mbox{ on } \Omega_{2^{-(k+1)}} - \Omega_{2^{-k}},\ k\geq 0,$$
where we set $\Omega_1=\emptyset$. It follows with \eqref{eq:ii2} and \eqref{eq:ii3} that
$$\int_X \psi' |u|^2 e^{-\varphi} dV_X \leq (c^{-1} + 1) \|u\|^2_D = (c^{-1} +1) (Qu,u)_{L^2}$$
for all $u\in\Dom(Q)\cap\Ra(Q)$. So, (iv) is satisfied with $\psi= (c^{-1}+1)^{-1}\psi'$.

It remains to show (iv) $\Rightarrow$ (ii). Assume that (iv) holds.
It is enough to show that $j(\mathcal{L})$ is precompact in $L^2_*(X,\varphi)$.
This will be done by checking the two conditions in Theorem \ref{thm:precompact}
for $j(\mathcal{L})$.

The second condition in Theorem \ref{thm:precompact} is obvious: Let $\epsilon>0$.
Choose $\Omega_\epsilon \subset\subset X$ such that $\psi \geq 1/\epsilon^2$ on $X-\Omega_\epsilon$.
Then
$$\epsilon^{-2} \int_{X-\Omega_\epsilon} |u|^2e^{-\varphi} dV_X \leq \int_X \psi |u|^2 e^{-\varphi} dV_X \leq \|u\|_D^2 \leq 1$$
for all $u\in \mathcal{L}\cap \Dom(Q)$. That proves the second condition as $\Dom(Q)$ is dense in $D$.
By Lemma \ref{lem:Q} \eqref{eq:apriori}, we have
\begin{eqnarray}\label{eq:bounded}
\|u\|^2_{L^2} \leq c^{-1} \left( \|Tu\|^2_{L^2} + \|T^*u\|^2_{L^2}\right) = c^{-1} \|u\|^2_D < c^{-1}
\end{eqnarray}
for all $u\in\mathcal{L}$. Hence, $\mathcal{L}$ is a subset of $\mathcal{K}$ 
in Lemma \ref{lem:i} with $k=1+c^{-1}$,
and so Lemma \ref{lem:i} yields the first condition in Theorem \ref{thm:precompact}.
\end{proof}

\newpage
If $T$ is a closed extension of the $\dq$-operator and $G^{-1}$ the $\dq$-Neumann operator
associated to this $\dq$-operator, Theorem \ref{thm:compact} reads as:

\begin{thm}\label{thm:compact2}
Let $Z$ be a Hermitian complex space of pure dimension $n$, $X\subset Z$ an open Hermitian submanifold
and $\dq$ a closed $L^2(X,\varphi)$-extension of the $\dq_{cpt}$-operator on smooth forms with compact support in $X$,
for example $\dq=\dq_w$ the $\dq$-operator in the sense of distributions.
Let $0\leq p,q \leq n$.

Assume that $\dq$ has closed range in $L^2_{p,q}(X,\varphi)$ and in $L^2_{p,q+1}(X,\varphi)$.
Then
$$\Box_{p,q} = \dq_{p,q} \dq^*_{p,q+1} + \dq^*_{p,q}\dq_{p,q-1}$$
has closed range and the following conditions are equivalent:

(i) The $\dq$-Neumann operator $N_{p,q}=\Box^{-1}_{p,q}$ is compact.

(ii) For all $\epsilon>0$, there exists $\Omega\subset\subset X$ such that $\|u\|_{L^2_{p,q}(X-\Omega,\varphi)}<\epsilon$
for all
$$u\in \{u\in \Dom(\dq)\cap\Dom(\dq^*)\cap \Ra(\Box_{p,q}): \|\dq u\|^2_{L^2} + \|\dq^*u\|^2_{L^2}<1\}.$$

(iii) There exists  a smooth function $\psi\in C^\infty(X,\R)$, $\psi>0$, such that $\psi(z)\rightarrow \infty$ as $z\rightarrow bX$,
and
\begin{eqnarray*}
( \Box_{p,q} u,u)_{L^2} \geq \int_X \psi |u|^2 e^{-\varphi} dV_X\ \ \mbox{ for all } u\in\Dom(\Box_{p,q})\cap\Ra(\Box_{p,q}).
\end{eqnarray*}
\end{thm}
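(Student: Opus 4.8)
The plan is to deduce Theorem \ref{thm:compact2} directly from Theorem \ref{thm:compact} by choosing the right $V_1, V_2, V_3$ and verifying the hypotheses of Lemma \ref{lem:Q}. First I would set $T=\dq$ (a closed $L^2(X,\varphi)$-extension of $\dq_{cpt}$), which satisfies $T^2=0$, is densely defined and closed, and is elliptic in the interior since $\dq\oplus\dq^*$ satisfies the G\r{a}rding inequality \eqref{eq:garding1} (see \cite{FK}, Theorem 2.2.1). Then I would take
$$V_1 = L^2_{p,q-1}(X,\varphi),\quad V_2 = L^2_{p,q}(X,\varphi),\quad V_3 = L^2_{p,q+1}(X,\varphi),$$
so that $Q=P|_{V_2} = \Box_{p,q}$ is exactly the operator in the statement, and the Green operator $Q^{-1}$ is $N_{p,q}$. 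Since $\dq$ is degree-graded, the inclusions $\Ra(T|_{V_1})\subset V_2$ and $\Ra(T|_{V_2})\subset V_3$ are automatic, and the adjoint identities $T|_{V_1}^*=T^*|_{V_2}$, $T|_{V_2}^*=T^*|_{V_3}$ hold because the formal adjoint of $\dq$ lowers the form-degree by the matching amount; more precisely, the graded decomposition $L^2_*(X,\varphi)=\bigoplus_{r,s}L^2_{r,s}(X,\varphi)$ is preserved by $T$ and $T^*$, so restricting to a single degree commutes with taking adjoints.

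Next I would check the remaining hypothesis of Lemma \ref{lem:Q}, namely that $\Ra(T|_{V_1})$ and $\Ra(T|_{V_2})$ are closed. This is precisely the assumption that $\dq$ has closed range in $L^2_{p,q}(X,\varphi)$ and in $L^2_{p,q+1}(X,\varphi)$, which is given. With all hypotheses of Lemma \ref{lem:Q} in place, that lemma gives that $Q=\Box_{p,q}$ is self-adjoint with closed range and that the a priori estimates \eqref{eq:apriori} hold. I would also note that $V_2=L^2_{p,q}(X,\varphi)$ is trivially closed under multiplication by smooth compactly supported functions. Thus all the standing assumptions of Theorem \ref{thm:compact} are met, and its conclusion applies verbatim: conditions (i)–(iv) of Theorem \ref{thm:compact} are equivalent.

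Finally I would translate the four equivalent conditions of Theorem \ref{thm:compact} into the three listed in Theorem \ref{thm:compact2}. Condition (i) of Theorem \ref{thm:compact} (compactness of $Q^{-1}$) is condition (i) here (compactness of $N_{p,q}$). Condition (iii) of Theorem \ref{thm:compact}, written out with $D=\{u\in L^2_{p,q}(X,\varphi): u\in\Dom(\dq)\cap\Dom(\dq^*)\cap\Ra(\Box_{p,q})\}$ and graph norm $\|u\|_D^2=\|\dq u\|^2_{L^2}+\|\dq^*u\|^2_{L^2}$, is exactly condition (ii) here after rescaling the unit ball $\{\|u\|_D<1\}$. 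Condition (iv) of Theorem \ref{thm:compact} with $Q=\Box_{p,q}$ is condition (iii) here. Condition (ii) of Theorem \ref{thm:compact} (compactness of the injection $j$) is simply not listed among the three but is implied by the chain. So the equivalences (i) $\Leftrightarrow$ (ii) $\Leftrightarrow$ (iii) of Theorem \ref{thm:compact2} follow at once.

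There is essentially no serious obstacle here: the statement is a specialization, and the only genuine points to nail down are the adjoint identities $T|_{V_r}^* = T^*|_{V_{r+1}}$ for the graded pieces of $\dq$. The mild subtlety is that $T$ here acts on the full graded space $L^2_*(X,\varphi)$, so one must be slightly careful that restricting the (global) adjoint $T^*=\dq^*$ to a single degree really does coincide with the adjoint of the restricted operator; this is standard for operators respecting an orthogonal direct-sum decomposition of the Hilbert space, and I would dispatch it in a sentence. Everything else is bookkeeping: identifying the $V_i$, invoking the hypothesis of closed range to feed Lemma \ref{lem:Q}, and reading off the four equivalent conditions from Theorem \ref{thm:compact}.
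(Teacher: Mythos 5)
Your proposal is correct and follows exactly the paper's route: the paper's own proof of this theorem consists of the single observation that Theorem \ref{thm:compact} applies with $V_j = L^2_{p,q-2+j}(X,\varphi)$, $j\in\{1,2,3\}$ (together with the convention $V_1=\{0\}$ if $q=0$ and $V_3=\{0\}$ if $q=n$, an edge case you omit but which causes no difficulty). Your additional verifications of the hypotheses of Lemma \ref{lem:Q} --- the adjoint identities for the graded pieces, the closed-range assumption, and stability of $V_2$ under multiplication by cut-off functions --- are exactly the bookkeeping the paper leaves implicit.
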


\begin{proof}
The assumptions of Theorem \ref{thm:compact} are satisfied for
$$V_j = L^2_{p,q-2+j}(X,\varphi)\ \ \ ,\ j\in\{1,2,3\}.$$
We use $V_1=\{0\}$ if $q=0$, and $V_3=\{0\}$ if $q=n$.
\end{proof}

\newpage
\section{Compactness of the $\dq$-Neumann operator on complex spaces with isolated singularities}

\subsection{Compact solution operators for the $\dq_w$-equation}\label{sec:sop}

In this section, we use some $L^2$-regularity results for the $\dq_w$-equation at isolated singularities
due to Forn{\ae}ss, {\O}vrelid and Vassiliadou (see \cite{FOV2}) to construct compact solution
operators for the $\dq_w$-equation.

Let $X$ be a connected Hermitian complex space of pure dimension $n$ with only isolated singularities and
$\Omega\subset\subset X$ a relatively compact domain.
Assume that either $X$ is compact and $\Omega=X$, or that $X$ is Stein and
$\Omega$ has smooth strongly pseudoconvex boundary which does not contain singularities, $b\Omega\cap \Sing X=\emptyset$.

Let $\Omega^* = \Omega -\Sing X$ and $A=\Omega \cap \Sing X=\{a_1, ..., a_m\}$
the set of isolated singularities in $\Omega$.
For $z\in X$, we denote by $d_A(z)$ the distance $\dist_X(z,A)$ of the point $z$ to the singular set $A$ in $X$.
Here, $\dist_X(x,y)$ is the infimum of the length of piecewise smooth curves connecting
two points $x,y$ in $X$.

\begin{thm}\label{thm:fov2}
Let $X, \Omega, A$ as above and $p+q<n$, $q\geq 1$.
Then there exists a closed subspace $H$ of finite codimension in
$$\ker \dq_w: L^2_{p,q}(\Omega^*) \rightarrow L^2_{p,q+1}(\Omega^*)$$
and a constant $C>0$ such that for each $f\in H$ there exists $u\in L^2_{p,q-1}(\Omega^*)$
with $\dq_w u=f$ satisfying
\begin{eqnarray}\label{eq:est1}
\int_{\Omega^*} |u|^2 d_A^{-2} \log^{-4} (1 + d_A^{-1}) dV_X \leq C \int_{\Omega^*} |f|^2 dV_X.
\end{eqnarray}
For $p+q>n$, there exist constants $a>0$, $C_a>0$ and a closed subspace $L$ of finite codimension in
$$\ker \dq_w: L^2_{p,q}(\Omega^*) \rightarrow L^2_{p,q+1}(\Omega^*)$$
such that 
for each $f\in L$ there exists $u\in L^2_{p,q-1}(\Omega^*)$
with $\dq_w u=f$ satisfying
\begin{eqnarray}\label{eq:est2}
\int_{\Omega^*} |u|^2 d_A^{-2a} dV_X \leq C_a \int_{\Omega^*} |f|^2 dV_X.
\end{eqnarray}

If $X$ is Stein and $\Omega$ has smooth strongly pseudoconvex boundary which does
not contain singularities, then the $\dq_w$-equation is solvable with the estimate \eqref{eq:est2} if $p+q>n$, i.e. $L=\ker\dq_w$,
and $a>0$ can be chosen arbitrarily in $(0,1)$.

If $\Omega=X$ is compact, we have to assume in the second case (i.e. in the case $p+q>n$) that either $p+q>n+1$ or that $p=n$ and $q=1$.
\end{thm}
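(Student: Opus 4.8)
The plan is to deduce Theorem \ref{thm:fov2} from the corresponding $L^2$-regularity results of Forn\ae ss, \O vrelid and Vassiliadou \cite{FOV2} by transporting them from a resolution of singularities down to $X$ itself. First I would invoke Hironaka's resolution of singularities to obtain a proper holomorphic map $\pi\colon M\to X$ from a complex manifold $M$, biholomorphic over $X-\Sing X$, with the exceptional set $E=\pi^{-1}(A)$ a divisor with normal crossings. Pulling back the Hermitian metric of $X$ gives a (possibly degenerate along $E$) metric on $M$; on $M-E\cong\Omega^*$ the $L^2$-spaces of $(p,q)$-forms agree up to the measurable identification induced by $\pi$. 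The point is that the estimates in \cite{FOV2} are stated on such a resolution, in terms of the distance to the exceptional divisor and a weight comparable to $d_A$, so I would quote their solvability-with-estimates result on $M$, producing for $f$ in a finite-codimensional subspace of $\ker\dq_w$ a solution $u$ of $\dq_w u=f$ with the weighted bound. Then one pushes the inequality back to $\Omega^*$, checking that the pulled-back weight $d_E$ on $M$ is comparable (up to logarithmic factors, which is exactly why the $\log^{-4}$ appears in \eqref{eq:est1}) to $d_A\circ\pi$, so that \eqref{eq:est1} respectively \eqref{eq:est2} holds downstairs.

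For the dichotomy between $p+q<n$ and $p+q>n$ I would treat the two ranges separately, as is done in \cite{FOV2}. In the range $p+q<n$ the natural gain is only logarithmic: the solution operator gains a factor $d_A^{-1}$ weakened by $\log^{-2}$ on the form level, hence $d_A^{-2}\log^{-4}$ at the level of squared norms, which is precisely \eqref{eq:est1}. In the range $p+q>n$ one has instead a genuine power gain $d_A^{-a}$ for some small $a>0$, giving \eqref{eq:est2}; here the admissible exponent $a$ and the constant $C_a$ come from the resolution data (the multiplicities of $E$) and from the Hörmander-type estimate on $M$ with a weight $-a\log d_E$, which is plurisubharmonic for $a$ small. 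The finite-codimensional subspaces $H$ and $L$ arise because the full $\dq_w$-cohomology of $\Omega^*$ in these bidegrees need not vanish: by the $L^2$-$\dq$-cohomology computations at isolated singularities (again \cite{FOV2}, combined with the closed-range statement recalled in the introduction), the obstruction space is finite-dimensional, and one takes $H$, $L$ to be orthogonal complements of lifts of these finitely many cohomology classes inside $\ker\dq_w$.

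For the Stein case with smooth strongly pseudoconvex boundary disjoint from $\Sing X$, the global cohomological obstruction disappears: away from the singularities one has the classical Kohn/Hörmander solvability for $\dq$ on strongly pseudoconvex domains, so solvability of $\dq_w u=f$ holds for every $\dq_w$-closed $f$ in these bidegrees, i.e. $L=\ker\dq_w$. One then patches a local solution near $A$ (coming from the resolution estimate, which is a local statement around each $a_i$) with a global solution on $\Omega$ minus neighborhoods of $A$, using a cut-off and solving an auxiliary $\dq$-equation with a gain in a smooth strongly pseudoconvex subdomain; this is where the freedom to choose $a\in(0,1)$ arbitrarily comes from, since away from $A$ no loss is incurred and near $A$ the resolution estimate holds for any $a<1$ (the constant $C_a$ blowing up as $a\to1$). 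The extra restriction when $\Omega=X$ is compact and $p+q>n$ (namely $p+q>n+1$ or $(p,q)=(n,1)$) is inherited directly from the hypotheses under which \cite{FOV2} establish the estimate in top-adjacent bidegrees, where Serre duality and the behavior of the canonical sheaf under resolution force the exclusion of the borderline case $p+q=n+1$, $q\geq2$.

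The main obstacle I expect is the careful bookkeeping of weights under the resolution: one must verify that the pullback of $d_A$ and the distance $d_E$ to the normal-crossings divisor on $M$ are comparable in the precise quantitative form (power vs. logarithmic) that yields exactly the exponents in \eqref{eq:est1} and \eqref{eq:est2}, and that the measurable identification $\pi|_{M-E}$ preserves the relevant $L^2$-norms of $(p,q)$-forms despite the degeneration of the pulled-back metric along $E$. This comparison, together with ensuring that the finite-codimensional subspaces behave well under $\pi$, is the technical heart; everything else is a matter of quoting \cite{FOV2} and assembling the pieces with cut-off functions and the already-available closed-range statements.
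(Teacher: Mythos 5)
The overall idea---reduce to the results of \cite{FOV2} with the help of a resolution of singularities---is the right one, but two of your key steps do not go through as described. Your route to \eqref{eq:est1} rests on the claim that the distance $d_E$ to the exceptional divisor on $M$ is comparable to $d_A\circ\pi$ ``up to logarithmic factors'', and that the $\log^{-4}$ in \eqref{eq:est1} arises from this comparison. That is false in general: $\pi^*d_A$ vanishes along $E$ to orders governed by the multiplicities of the resolution, so the two quantities are comparable only up to \emph{powers} (this is precisely the content of the inclusions $\mathcal{L}^{p,q}_\gamma \subset \mathcal{I}^N\mathcal{L}^{p,q}_\sigma$ of \cite{Rp1}, Lemma 2.1, resp.\ \cite{FOV1}, Lemma 3.1), and a naive transport of a weighted estimate from $M$ down to $\Omega^*$ would lose an uncontrolled power of $d_A$. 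In fact the relevant results of \cite{FOV2} (Theorem 1.1, Propositions 3.1 and 5.8, Theorem 5.9) are stated directly on the singular space, with weight $\|z\|^{-2}(-\log\|z\|^2)^{-4}$ in a local embedding near each singular point; the paper only needs the elementary observation $\|z\|\lesssim d_A(z)$ to convert them into \eqref{eq:est1}, and the entire Stein case is settled this way without pushing any estimate through $\pi$. The $\log^{-4}$ is already present in the local result; it is not produced by the resolution.

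In the compact case you cannot obtain the finite codimension of the obstruction space by quoting \cite{FOV2}: their results are local resp.\ semi-global (Stein with strongly pseudoconvex boundary), and the ``closed-range statement recalled in the introduction'' is itself a corollary of the theorem you are proving, so appealing to it is circular. The paper instead constructs a comparison map $\Psi=\pi^*\colon H^{p,q}_w(\Omega^*)\to H^{p,q}_{\sigma,N}(M)$ into a weighted $L^2$-cohomology on the compact manifold $M$ (finite-dimensional because it computes $H^q(M,\mathcal{I}^N\Omega^p_M)$), proves $\Psi$ injective by cutting off near $A$ and solving a local, compactly supported $\dq$-equation with estimates (\cite{FOV2}, Proposition 3.1), and reads off both the finite codimension and the estimate from this. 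This is also where the restriction for $p+q>n$ actually comes from: for $p+q>n+1$ one needs local solvability in degree $(p,q-1)$ near $A$ to make $\Psi$ well defined on cohomology, while the case $(p,q)=(n,1)$ is handled via the isomorphism $H^{n,1}_w(\Omega^*)\cong H^{n,1}_\sigma(M)$ of \cite{Rp8} together with a separate argument producing the weight $d_A^{-2a}$ --- not by the Serre-duality exclusion you describe. These cohomological comparison arguments are the missing core of the compact case and would have to be supplied.
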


\begin{proof}
We will first treat the case that $X$ is Stein and $\Omega\subset\subset X$ has smooth strongly pseudoconvex boundary
that does not contain singularities.

We observe that there exists a strictly plurisubharmonic exhaustion function
$\rho'$ of $X$ which takes the value $\rho'=-\infty$ exactly on the singular set of $X$
and which is real-analytic outside.
This follows from \cite{CM}, Theorem 1.2,
and the observation that $M$ is a $1$-convex space with exceptional set $\pi^{-1}(\Sing X)$
if $\pi: M \rightarrow X$ is a resolution of singularities. 
We will explain desingularization in more details below.
Let $\rho=e^{\rho'}$.
%By inserting $\rho$ into a convex function that vanishes to a sufficiently high order
%in $0\in\R$,we could assume that $\rho$ is smooth of some degree in $A$ (because it has finite Lelong numbers in the singularities).

After restricting $\rho$ to a neighborhood of $\o{\Omega}$,
there exists an arbitrarily small regular value $c>0$ of $\rho$ such that $\{\rho<c\}\subset\subset \Omega$.
Since $\Omega$ has smooth strongly pseudoconvex boundary, it can be exhausted by an increasing
sequence of smoothly bounded pseudoconvex domains.
So, all the assumptions of Proposition 5.8 and Theorem 5.9 in \cite{FOV2} are fulfilled.

The statement for $p+q<n$ follows
from the combination of Theorem 5.8 and Theorem 1.1 in \cite{FOV2}
by the following observation:
Let $a_j\in A$ be an isolated singularity. Then there exists a small neighborhood
$U_j$ of $a_j$ which can be embedded holomorphically in a complex number space $\C^L$
such that $a_j=0 \in \C^L$ and
$$\|z\| \lesssim d_A(z),$$
because the Euclidean distance of a point $z$ to the origin is less or equal to the length
of curves connecting $z$ to the origin in $X$, if the length of a curve is measured
with respect to the Euclidean metric. But the restriction of the Euclidean metric to $X$
is isometric to the original Hermitian metric of $X$.

So, if the equation $\dq_w u=f$ is solvable on $U_j-\{a_j\}$ according to Theorem 1.1 from \cite{FOV2},
then:
\begin{eqnarray*}
\int_{U_j-\{a_j\}} |u|^2 d_A^{-2} \log^{-4}(1+d_A^{-1}) dV_X &\lesssim& \int_{U_j-\{a_j\}} |u|^2 \|z\|^{-2} (-\log \|z\|^2)^{-4} dV_X\\
&\lesssim& \int_{U_j-\{a_j\}} |f|^2 dV_X.
\end{eqnarray*}
By \cite{FOV2}, Theorem 1.1, there are only finitely many obstructions to the equation
$\dq_w u=f$ on $U_j-\{a_j\}$ with that estimate. So, \cite{FOV2}, Proposition 5.8., yields
that there are only finitely many obstructions to solving the equation $\dq_w u=f$ on $\Omega^*$
with the estimate \eqref{eq:est1}.

If $p+q>n$, then the statement of our theorem is just Theorem 5.9 in \cite{FOV2},
$L=\ker\dq_w$, and $a>0$ can be chosen arbitrarily in $(0,1)$.\\

It remains to treat the case that $X$ is compact and $\Omega=X$.
This can be done by use of a desingularization.
Let
\begin{eqnarray*}
\pi: M \rightarrow X
\end{eqnarray*}
be a resolution of singularities (which exists due to Hironaka \cite{Hi}), i.e. a proper holomorphic surjection such that
\begin{eqnarray*}
\pi|_{M-E}: M-E \rightarrow X-\Sing X
\end{eqnarray*}
is biholomorphic, where $E=\pi^{-1}(\Sing X)$ is the exceptional set.
We can assume that $E$ is a divisor with only normal crossings,
i.e. the irreducible components of $E$ are regular and meet complex transversely.

For the topic of desingularization, we refer to
\cite{AHL}, \cite{BiMi} and \cite{Ha}.
Let
\begin{eqnarray*}
\gamma:= \pi^* h
\end{eqnarray*}
be the pullback of the Hermitian metric $h$ of $X$ to $M$.
$\gamma$ is positive semidefinite (a pseudo-metric) with degeneracy locus $E$.

We give $M$ the structure of a Hermitian manifold with a freely chosen (positive definite)
metric $\sigma$. Then $\gamma \lesssim \sigma$
and $\gamma \sim \sigma$ on compact subsets of $M-E$.

Let $\mathcal{L}^{p,q}_\sigma$ be the sheaf of germs of $(p,q)$-forms which are locally square-integrable
with respect to the metric $\sigma$ and which have a $\dq$-derivate in the sense of distributions which
is also square-integrable. Let $\mathcal{I}$ be the sheaf of ideals of the exceptional set $E$.
Let $k\in\Z$.
If $E$ is given in a point $x\in M$ as the zero set of a (germ of a) holomorphic function $f$,
then
$$(\mathcal{I}^k \mathcal{L}^{p,q}_\sigma)_x = \{ u: f^{-k} u \in (\mathcal{L}^{p,q}_\sigma)_x\}.$$
We have to use the weighted $\dq$-operator in the sense of distributions
$$(\dq_k)_x u_x:= f^k \dq_w ( f^{-k} u_x),$$
which coincides with the usual $\dq_w$-operator if $k\geq 0$.
We obtain fine resolutions
$$0 \rightarrow \mathcal{I}^k\Omega_M^p \hookrightarrow
\mathcal{I}^k \mathcal{L}^{p,0}_\sigma \xrightarrow{\ \dq_k\ }
\mathcal{I}^k \mathcal{L}^{p,1}_\sigma \xrightarrow{\ \dq_k\ }
\cdots \xrightarrow{\ \dq_k\ }
\mathcal{I}^k \mathcal{L}^{p,n}_\sigma \rightarrow 0,$$
where $\Omega^p_M$ is the sheaf of germs of holomorphic $p$-forms on $M$.
By the abstract Theorem of de Rham, this implies
$$H^{p,q}_{\sigma,k}(U):=
\frac{\mbox{ker }
(\dq_k: \mathcal{I}^k\mathcal{L}^{p,q}_\sigma(U) \rightarrow \mathcal{I}^k\mathcal{L}^{p,q+1}_\sigma(U))}
{\mbox{Im }
(\dq_k: \mathcal{I}^k\mathcal{L}^{p,q-1}_\sigma(U) \rightarrow \mathcal{I}^k\mathcal{L}^{p,q}_\sigma(U))}
\cong H^q(U,\mathcal{I}^k\Omega_M^p)$$
for open sets $U\subset M$.
We will use the well-known fact that
\begin{eqnarray}\label{eq:finite}
\dim H^{p,q}_{\sigma,k}(M) = \dim H^q(M,\mathcal{I}^k \Omega^p_M) < \infty
\end{eqnarray}
for all $0\leq p, q\leq n$ since $M$ is compact.

By \cite{Rp1}, Lemma 2.1, or \cite{FOV1}, Lemma 3.1, respectively,
there exists an integer $N\leq 0$ depending on $\pi: M\rightarrow X$ such that
\begin{eqnarray*}
\mathcal{L}^{p,q}_\gamma \subset \mathcal{I}^N\mathcal{L}^{p,q}_\sigma
\end{eqnarray*}
for all $0\leq p,q\leq n$, where $\mathcal{L}^{p,q}_\gamma$ is defined with respect to the pseudo-metric $\gamma$
analogously to $\mathcal{L}^{p,q}_\sigma$.
Note that the $\dq_N$-equation extends over the exceptional set by
the $\dq$-extension Theorem 3.2 in \cite{Rp1}.

Let
$$H^{p,q}_w(\Omega^*) := 
\frac{\ker (\dq_w: L^2_{p,q}(\Omega^*) \rightarrow L^2_{p,q+1}(\Omega^*))}
{\mbox{Im } (\dq_w: L^2_{p,q-1}(\Omega^*) \rightarrow L^2_{p,q}(\Omega^*))}.$$
We can now define a map
\begin{eqnarray}\label{eq:iso}
\Psi=\pi^*: H^{p,q}_w(\Omega^*) \rightarrow H^{p,q}_{\sigma,N}(M)
\end{eqnarray}
by the following observation:
Let $u\in L^2_{p,q-1}(\Omega^*)$ and $g\in L^2_{p,q}(\Omega^*)$ such that $\dq_w u=g$ on $\Omega^*$.
Then
\begin{eqnarray*}
\pi^* u &\in& \mathcal{L}^{p,q-1}_\gamma(M) \subset \mathcal{I}^N \mathcal{L}^{p,q-1}_\sigma(M),\\
\pi^* g &\in& \mathcal{L}^{p,q}_\gamma(M) \subset \mathcal{I}^N \mathcal{L}^{p,q}_\sigma(M),
\end{eqnarray*}
where we extend $\pi^* u$ and $\pi^* g$ trivially over the exceptional set $E$.
It is clear that $\dq_w \pi^* u= \pi^* g$ on $M-E$,
and it follows by use of the $\dq$-extension Theorem 3.2 in \cite{Rp1} that
$$\dq_N \pi^* u = \pi^* g$$
on $M$. So, $\Psi=\pi^*$ is a well-defined map on cohomology classes.
We will now show that $\Psi=\pi^*$ is injective if $p+q < n$.
Let $[f] \in H^{p,q}_w(\Omega^*)$ and assume that 
$$\pi^*[f]=0 \in H^{p,q}_{\sigma,N}(M),$$
i.e. there exists a form $u\in\mathcal{I}^N \mathcal{L}^{p,q-1}_\sigma(M)$
such that
$$\dq_N u = \pi^* f.$$
Let $\chi \in C^\infty_{cpt}(X^*)$, $0\leq \chi\leq 1$, be a cut-off function
such that $1-\chi$ is supported only in small neighborhoods $\{U_1, ..., U_m\}$ of the isolated singularities.
Let $U=\bigcup U_j$ and $K=X - U$. Then $u':=(\pi^*\chi)u$ has compact support in $M-E$,
$\dq_N u' = \dq_w u'$ on $M$, and
$$\dq_w u' = \pi^* f$$
on $\pi^{-1}(K)$ since $\chi\equiv 1$ in a neighborhood of $K$. Set
$$v:= \big( \pi|_{M-E}^{-1})^* u' \in L^2_{p,q-1}(\Omega^*).$$
Then
$$\dq_w v = f$$
on $K$ and $\dq_w v\equiv 0$ in a neighborhood of the isolated singularities.
Consider
$$f' := f - \dq_w v \in L^2_{p,q}(\Omega^*).$$
This form is $\dq_w$-closed and has support in $U=\bigcup U_j$.
If $p+q<n$, then the equation $\dq_w v' = f'$ is solvable in $U$
in the category of $L^2$-forms with compact support in $U$
such that the estimate
\begin{eqnarray}\label{eq:est30}
\int_{U_j-\{a_j\}} |v'|^2 d_A^{-2} \log^{-4}(1+d_A^{-1}) dV_X \lesssim \int_{U_j-\{a_j\}} |f'|^2 dV_X
\end{eqnarray}
holds by \cite{FOV2}, Proposition 3.1, if we assume that $U$ has been chosen appropriately.
Hence
$$f = \dq_w (v' + v),$$
which shows that $\Psi=\pi^*$ is injective if $p+q<n$, so that
$$\dim H^{p,q}_w(\Omega^*) < \infty$$
by use of \eqref{eq:finite} and \eqref{eq:iso}.
The $L^2$-norms of $v$, $\dq_w v$ and $f'$ can be dominated by the $L^2$-norm of $f$.
Then $v'$ satisfies the estimate \eqref{eq:est1},
and that is also clear for the form $v$ which has support in a fixed compact set with positive distance to the singular set $A$.
That proves the theorem if $\Omega=X$ is compact and $p+q<n$.\\

Let us finally consider the case that $\Omega=X$ is compact and $p+q>n$.
Here, we must distinguish between the cases $q=1$ and $p+q>n+1$.
Let $q=1$ which implies that $p=n$. We need an observation about the behavior of
$(n,0)$ and $(n,1)$-forms under the resolution of singularities $\pi: M\rightarrow X$.

Since $\sigma$ is positive definite and $\gamma$ is positive semi-definite,
there exists a continuous function $g\in C^0(M,\R)$ such that
$$dV_\gamma = g^2 dV_\sigma.$$
This yields $|g| |\omega|_\gamma  = |\omega|_\sigma$
if $\omega$ is an $(n,0)$-form, and
\begin{eqnarray*}
|\omega|_\sigma \leq |g||\omega|_\gamma
\end{eqnarray*}
if $\omega$ is a $(n,q)$-form, $0\leq q\leq n$. 
So, for an $(n,q)$ form $\omega$ on $M$:
\begin{eqnarray}\label{eq:l2est2}
\int |\omega|_\sigma^2 dV_\sigma \leq \int g^{2} |\omega|_\gamma^2 g^{-2} dV_\gamma = \int |\omega|^2_\gamma dV_\gamma.
\end{eqnarray}
Hence, there exists a natural mapping
$$\Psi=\pi^*: H^{n,q}_w(\Omega^*) \rightarrow H^{n,q}_{\sigma}(M),$$
which is an isomorphism by \cite{Rp8}, Theorem 1.5.
That shows that especially $H^{n,1}_w(\Omega^*)$ is finite-dimensional,
but we need some additional considerations to obtain also the estimate \eqref{eq:est2}.

As above, let $U=\bigcup U_j$ be a neighborhood of the isolated singularities
such that the $\dq_w$-equation is solvable for $(n,1)$-forms on $U$ with the estimate \eqref{eq:est2}, 
and let $\chi_1\in C^\infty(U)$, $0\leq \chi_1\leq 1$, be a cut-off function which is identically $1$
in a smaller neighborhood of the singular set $A$.

For $[f]\in H^{n,1}_w(\Omega^*)$, let $u\in L^2_{n,0}(U^*)$ be a solution on $U^*$
and set
$$f' := f - \dq(\chi_1 u) \in [f] \in H^{n,1}_w(\Omega^*),$$
where we extend $\chi_1 u$ trivially to $\Omega^*$.
Now, if $[f]=0$ in $H^{n,1}_w(\Omega^*)$ which is equivalent to $[\pi^* f']=0$ in $H^{n,1}_\sigma(M)$, then there exists $g\in L^{n,0}_\sigma(M)$
such that
$$\dq_w g = \pi^* f'$$
on $M$. But $\pi^* f'$ vanishes identically in a fixed neighborhood of the exceptional set $E$.
Hence $g$ is a holomorphic $n$-form in a fixed neighborhood of the exceptional set.
There, it is smooth and bounded and the sup-norm is bounded by the $L^2$-norm of $f'$, 
which in turn is bounded by the $L^2$-norm of $f$.

Let $\varphi$ be a fixed weight function that vanishes exactly of order $1$ along the
exceptional set $E$. Then $\varphi^{-(1-\epsilon)}g$ is square-integrable for a small $\epsilon>0$ and its $L^2$-norm
can be estimated by the $L^2$-norm of $f$.
Since $\mathcal{L}^{n,0}_\sigma(M)\cong L^2_{n,0}(\Omega^*)$, it follows by Lemma 3.1. in \cite{FOV1} that there exists an exponent $a>0$ such that
$$d_A^{-a} (\pi|_{M-E}^{-1})^* g \in L^2_{n,0}(\Omega^*).$$

Hence,
$$(\pi|_{M-E}^{-1})^* g + \chi_1 u$$
is the desired solution of the equation $\dq_w u=f$ which satisfies the estimate \eqref{eq:est2}
with that exponent $a>0$.\\

Finally, let $p+q>n+1$ which implies that $q\geq 2$.
Here, we proceed similar to the case $p+q<n$.
First, we define a map
$$\Psi: H^{p,q}_w(\Omega^*) \rightarrow H^{p,q}_{\sigma,N}(M)$$
where $N\geq 1$ is an integer such that
\begin{eqnarray}\label{eq:inclusion22}
\mathcal{I}^{N-1} \mathcal{L}^{p,q}_\sigma \subset \mathcal{L}^{p,q}_\gamma
\end{eqnarray}
for all $0\leq p,q\leq n$, which is true if only $N\geq 1$ is big enough by \cite{Rp1}, Lemma 2.1, or \cite{FOV1}, Lemma 3.1,
respectively.

We can define the map $\Psi$ as follows.
Let $[f]\in H^{p,q}_w(\Omega^*)$.
By solving the $\dq$-equation on the neighborhood $U$ of the singular set as above,
we can switch to the representative
$$f' = f - \dq(\chi_1 u) \in [f].$$
Since $f'$ has compact support away from the singular set, $\pi^* f' \in \mathcal{I}^{N}\mathcal{L}^{p,q}_\sigma(M)$,
and we can define
$$\Psi([f]) := [\pi^* f'] \in H^{p,q}_{\sigma,N}(M).$$
We need to show that this assignment is well-defined as a map on cohomology classes.
So, assume that
$$\dq_w g= f$$
on $\Omega^*$. Let $g'= g - \chi_1 u$. Then $\dq_w g'\equiv 0$ on the neighborhood $W$ of $A$ where $\chi_1\equiv 1$.
By shrinking $W$ appropriately, the $\dq_w$-equation is solvable on $W$ in the $L^2$-sense for $(p,q-1)$
if $p+q> n+1$. Hence, let $v\in L^2_{p,q-2}(W^*)$ such that
$$\dq v = g' = g -\chi_1 u,$$
and choose a cut-off function $\chi_2\in C^\infty_{cpt}(W)$, $0\leq \chi_2\leq 1$, which is identically $1$
in a smaller neighborhood of the singular set $A$.
Let 
$$g'' = g' - \dq_w( \chi_2 v) = g - \chi_1 u - \dq_w(\chi_2 v) \in L^2_{p,q-1}(\Omega^*).$$
Then
$$\dq_w g'' = \dq_w g - \dq_w(\chi_1 u) = f'$$
and $g''$ has compact support away from the singular set.
Hence
$$\dq_w (\pi^* g'') = \pi^* f',$$
so that $[\pi^* f']=0$ in $H^{p,q}_{\sigma,N}(M)$ and $\Psi$ is actually well-defined.

It is clear that $\Psi$ is injective because of the assumption \eqref{eq:inclusion22},
and we have arranged the index $N\geq 1$ such that a solution $\dq h=\pi^*f'$
on $M$ satisfies
$$\varphi^{-(1-\epsilon)} h \in L_\gamma^{p,q-1}(M)$$
as above. Hence, again
$$d_A^{-a} (\pi|_{M-E}^{-1})^* h \in L^2_{p,q-1}(\Omega^*)$$
with the exponent $a>0$ from above, and
$(\pi|_{M-E}^{-1})^* h + \chi_1 u$
is the desired solution of the equation $\dq_w u=f$ which satisfies the estimate \eqref{eq:est2}
with that exponent.
\end{proof}

\begin{cor}\label{cor:fov2}
Let $X, \Omega$ and $p, q$  as in Theorem \ref{thm:fov2}. 
Then the $\dq$-operator in the sense of distributions
$\dq_w: L^2_{p,q-1}(\Omega^*) \rightarrow L^2_{p,q}(\Omega^*)$
has closed range.
\end{cor}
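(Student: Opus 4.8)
The plan is to read off closedness of the range directly from the (weighted) solution operators furnished by Theorem~\ref{thm:fov2}, together with the elementary fact that a subspace of a Hilbert space lying between a closed subspace of finite codimension is itself closed.

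First I would upgrade the weighted estimates \eqref{eq:est1} and \eqref{eq:est2} to an honest unweighted bounded solution operator on a large subspace. In each of the cases covered by Theorem~\ref{thm:fov2} one obtains a \emph{closed} subspace $H$ of finite codimension in $(\ker\dq_w)_{p,q}$ together with a constant $C>0$ so that every $f\in H$ can be written $f=\dq_w u$ with $\int_{\Omega^*}|u|^2\,\omega\,dV_X\le C\int_{\Omega^*}|f|^2\,dV_X$, where $\omega$ is $d_A^{-2}\log^{-4}(1+d_A^{-1})$ if $p+q<n$ and $d_A^{-2a}$ if $p+q>n$. Since $\Omega\subset\subset X$, the function $d_A$ is bounded on $\Omega$; on the corresponding interval $(0,R]$ of values the weight $\omega$ is continuous, strictly positive, and tends to $+\infty$ as $d_A\to 0$, hence is bounded below by a positive constant on $\Omega^*$. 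Consequently $\|u\|_{L^2_{p,q-1}(\Omega^*)}\le C'\|f\|_{L^2_{p,q}(\Omega^*)}$ for all $f\in H$; in particular $H\subset\Ra(\dq_w)$. (In the Stein case with $p+q>n$ one even has solvability on all of $(\ker\dq_w)_{p,q}$, so there is nothing left to prove.)

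It then remains to assemble the pieces. Because $H$ is closed and has finite codimension in $(\ker\dq_w)_{p,q}$, and $H\subset\Ra(\dq_w)\subset(\ker\dq_w)_{p,q}$ (using $\dq_w^2=0$), the quotient $\Ra(\dq_w)/H$ is finite-dimensional; lifting a basis of it to forms $w_1,\dots,w_k\in\Ra(\dq_w)$ yields
$$\Ra(\dq_w)=H+\mathrm{span}_{\C}\{w_1,\dots,w_k\},$$
which, being the sum of a closed subspace and a finite-dimensional one, is closed. I do not expect a genuine obstacle here: the substantive content is entirely contained in Theorem~\ref{thm:fov2}, and the only point requiring a moment's care is the verification that the weights in \eqref{eq:est1}/\eqref{eq:est2} stay uniformly bounded away from zero on $\Omega^*$, which uses compactness of $\overline{\Omega}$ in an essential way.
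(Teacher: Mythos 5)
Your proposal is correct and is exactly the deduction the paper intends (the corollary is stated without explicit proof as an immediate consequence of Theorem \ref{thm:fov2}): the weights $d_A^{-2}\log^{-4}(1+d_A^{-1})$ and $d_A^{-2a}$ are indeed bounded below by a positive constant on the relatively compact $\Omega^*$, so the weighted estimates give a bounded solution operator on $H$, whence $H\subset\Ra(\dq_w)\subset(\ker\dq_w)_{p,q}$ and $\Ra(\dq_w)$ is the sum of the closed subspace $H$ and a finite-dimensional complement, hence closed. No gaps.
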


We are now in the position to construct compact solution operators for the $\dq_w$-equation.
Let $\varphi$ be the weight
$$\varphi = - \log \left(d_A^{-2} \log^{-4} (1 + d_A^{-1})\right)$$
if $p+q < n$ or
$$\varphi = - \log d_A^{-2a}$$
if $p+q>n$. For $p+q\neq n$, $q\geq 1$, and $q=1$ if $\Omega$ is compact and $p+q=n+1$, let
$$T_1: L^2_{p,q-2}(\Omega^*) \rightarrow L^2_{p,q-1}(\Omega^*,\varphi)$$
and
$$T_2: L^2_{p,q-1}(\Omega^*,\varphi) \rightarrow L^2_{p,q}(\Omega^*)$$
be the $\dq$-operators in the sense of distributions (ignore $T_1$ if $q=1$).

$T_1$ and $T_2$ are closed densely defined operators, $T_2\circ T_1=0$ and
$T_2$ has closed range by Theorem \ref{thm:fov2}. %since $\Ra(T_1)=\Ra(\dq_w)\cap L^2_{p,q-1}(\Omega^*,\varphi)$.

So, the adjoint operators $T_1^*$ and $T_2^*$ are closed densely defined operators with
$T_1^*\circ T_2^*=0$ and $T_2^*$ has closed range. We can use the orthogonal decomposition
\begin{eqnarray}\label{eq:decomp}
L^2_{p,q-1}(\Omega^*,\varphi) = \ker T_2 \oplus \Ra (T_2^*)
\end{eqnarray}
to define a bounded solution operator for the $\dq_w$-equation as in Lemma \ref{lem:Q}.
Let $H\subset L^2_{p,q}(\Omega^*)$ be the closed subspace from Theorem \ref{thm:fov2}
if $p+q<n$ or $H=L\subset L^2_{p,q}(\Omega^*)$ if $p+q>n$. For $f\in H$ let $Sf$ be the uniquely
defined element $u \perp \ker T_2$ such that $\dq_w u=f$. So,
\begin{eqnarray}\label{eq:S}
S: H \subset L^2_{p,q}(\Omega^*) \rightarrow \Ra(T_2^*) \subset L^2_{p,q-1}(\Omega^*,\varphi)
\end{eqnarray}
is a bounded solution operator for the $\dq_w$-equation and it satisfies
\begin{eqnarray}\label{eq:T1S}
T_1^*\circ S=0.
\end{eqnarray}
Since $L^2_{p,q-1}(\Omega^*,\varphi)$ is contained in  $L^2_{p,q-1}(\Omega^*)$,
we can show by use of the criterion for precompactness Theorem \ref{thm:precompact}
as in the proof of Theorem \ref{thm:compact} that $S$ is compact as an operator to the latter space.

\begin{thm}\label{thm:sop}
Let $p+q\neq n$. For $q\geq 2$, and $p+q\neq n+1$ if $\Omega$ is compact,
the $\dq_w$-solution operator $S$ is compact as an operator
$$S: \Dom S=H \subset L^2_{p,q}(\Omega^*) \rightarrow L^2_{p,q-1}(\Omega^*).$$
For $q=1$, there exists a bounded operator $P_0: H \rightarrow L^2_{p,0}(\Omega^*)$ such that
$S-P_0$ is a compact $\dq_w$-solution operator
$$S-P_0: \Dom S=H \subset L^2_{p,1}(\Omega^*) \rightarrow L^2_{p,0}(\Omega^*).$$
\end{thm}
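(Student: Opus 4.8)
The plan is to derive compactness of $S$ from the precompactness criterion Theorem~\ref{thm:precompact}, applied to the image $\mathcal{A}:=S(B)$ of the closed unit ball $B$ of $H$, after first peeling off --- in the Stein case --- the strongly pseudoconvex part of $b\Omega$ by means of Kohn's subelliptic estimate. By \eqref{eq:S} together with \eqref{eq:est1} and \eqref{eq:est2}, $\mathcal{A}$ is bounded in $L^2_{p,q-1}(\Omega^*,\varphi)$; and since $\Omega^*\subset\subset X$, the weight $e^{-\varphi}$ --- which equals $d_A^{-2}\log^{-4}(1+d_A^{-1})$ if $p+q<n$, resp.\ $d_A^{-2a}$ if $p+q>n$ --- is bounded below by a positive constant on $\Omega^*$, so the inclusion $L^2_{p,q-1}(\Omega^*,\varphi)\hookrightarrow L^2_{p,q-1}(\Omega^*)$ is bounded and $\mathcal{A}$ is bounded in $L^2_{p,q-1}(\Omega^*)$ as well. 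For $q\ge 2$ it is then enough to show that $\mathcal{A}$ is precompact in $L^2_{p,q-1}(\Omega^*)$.

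When $X$ is Stein and $b\Omega$ is smooth, strongly pseudoconvex and disjoint from $\Sing X$, I would fix a collar $W$ of $b\Omega$ with $\overline{W}\cap\Sing X=\emptyset$ and a cut-off $\chi\in C^\infty(\overline{\Omega})$, $0\le\chi\le1$, $\chi\equiv1$ near $b\Omega$, $\supp\chi\subset W$ (take $\chi\equiv0$ if $\Omega=X$ is compact). For $u=Sf\in\mathcal{A}$ one has $\dq_w u=f$ and $T_1^*u=0$ by \eqref{eq:T1S}, and since $\varphi$ is smooth and bounded on $W$ this yields a uniform bound $\|\dq_w(\chi u)\|_{L^2}+\|\dq^*(\chi u)\|_{L^2}\lesssim\|f\|_{L^2}+\|u\|_{L^2(\Omega^*,\varphi)}\le C$ over $\mathcal{A}$ (the derivative of $\chi$ only contributes the factor $u$, bounded in $L^2(W)$). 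Kohn's $\tfrac12$-subelliptic estimate at the strongly pseudoconvex boundary $b\Omega$ then gives a uniform $W^{1/2}(\Omega)$-bound for the $\chi u$; as these forms are supported in a fixed compact subset of $\overline{\Omega}\setminus\Sing X$, Rellich's lemma makes $\{\chi u:u\in\mathcal{A}\}$ precompact in $L^2$. It remains to show that $\mathcal{B}:=\{(1-\chi)u:u\in\mathcal{A}\}$ is precompact in $L^2_{p,q-1}(\Omega^*)$; its elements are supported in a fixed closed subset of $\Omega^*$ away from $b\Omega$.

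For $\mathcal{B}$ I would verify the two conditions of Theorem~\ref{thm:precompact}. Condition (i) is obtained as in the proof of Theorem~\ref{thm:compact}, via Lemma~\ref{lem:i} applied to $T=\dq_w$ with the spaces $V_1=L^2_{p,q-2}(\Omega^*)$, $V_2=L^2_{p,q-1}(\Omega^*,\varphi)$, $V_3=L^2_{p,q}(\Omega^*)$ from Lemma~\ref{lem:Q} (their hypotheses hold by Theorem~\ref{thm:fov2} and Corollary~\ref{cor:fov2}; $V_2$ is stable under multiplication by $C^\infty_{cpt}$-functions, and $\dq_w\oplus\dq^*_\varphi$ is elliptic on relatively compact subsets of $\Omega^*$, the weight being smooth there): each $u\in\mathcal{A}$ satisfies $\|u\|_G^2=\|u\|_{L^2(\Omega^*,\varphi)}^2+\|f\|_{L^2}^2\le C^2+1$, and multiplying by $1-\chi$ keeps this bounded (absorb the factor as in the proof of Lemma~\ref{lem:deformation1}), so $\mathcal{B}$ lies in a set $\mathcal{K}$ as in Lemma~\ref{lem:i}, which gives condition (i) for the $\varphi$-norm and hence for the smaller unweighted norm. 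For condition (ii), given $\epsilon>0$: since $e^\varphi$ --- namely $d_A^{2}\log^{4}(1+d_A^{-1})$, resp.\ $d_A^{2a}$ --- tends to $0$ as $d_A\to0$, I would choose $\rho>0$ so small that $\|(1-\chi)u\|_{L^2(\{d_A<\rho\})}^2\le(\sup_{\{d_A<\rho\}}e^\varphi)\,\|u\|_{L^2(\Omega^*,\varphi)}^2<\epsilon^2/4$ for all $u\in\mathcal{A}$, and (if $\Omega\ne X$) a $\rho'>0$ with $\{z\in\Omega:\dist_X(z,b\Omega)<\rho'\}\subset\{\chi\equiv1\}$, so that $(1-\chi)u$ vanishes near $b\Omega$; then $\Omega_\epsilon:=\{d_A\ge\rho\}\cap\{\dist_X(\cdot,b\Omega)\ge\rho'\}\subset\subset\Omega^*$ and $\|(1-\chi)u\|_{L^2(\Omega^*\setminus\Omega_\epsilon)}<\epsilon/2$. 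By Theorem~\ref{thm:precompact}, $\mathcal{B}$ is precompact, hence so is $\mathcal{A}$, i.e.\ $S$ is compact.

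Finally, for $q=1$ the operator $T_1$ is absent and the first-order system $\dq_w\oplus\dq^*$ on $(p,0)$-forms is not elliptic --- $\|u\|_{W^{1,2}}\lesssim\|u\|_{L^2}+\|\dq_w u\|_{L^2}$ is obstructed by holomorphic $p$-forms --- while the weights on the two adjacent levels do not fit together into a workable second-order operator; so Lemma~\ref{lem:i} does not apply to $S(B)$ and $S$ itself need not be compact. I would absorb this defect into a bounded operator $P_0$ with values in the holomorphic $p$-forms (so that $S-P_0$ still solves $\dq_w$) and built from the second-order formulation near $b\Omega$ (which is vacuous, or finite-rank, when $\Omega=X$ is compact, where $\ker\dq_w\subset L^2_{n,0}(\Omega^*)$ is finite-dimensional); then $S-P_0$ lands in a set on which the interior-regularity argument underlying Lemma~\ref{lem:i} is available, and the three steps above run for $S-P_0$. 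I expect the main obstacle to be exactly this separation of the two mechanisms producing non-compactness --- at the isolated singularities it is supplied by the genuine gain of integrability $d_A^{-2}\log^{-4}(1+d_A^{-1})$, resp.\ $d_A^{-2a}$, in the Forn{\ae}ss--{\O}vrelid--Vassiliadou estimate and enters through condition (ii) of Theorem~\ref{thm:precompact}, whereas at the strongly pseudoconvex part of $b\Omega$ it comes from Kohn's subelliptic estimate and Rellich's lemma --- together with, for $q=1$, working around the failure of interior ellipticity on $(p,0)$-forms.
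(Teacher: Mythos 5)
Your proposal follows essentially the same strategy as the paper: bound $S(B)$ in the weighted space via Theorem \ref{thm:fov2}, split by a cut-off into a piece near $\Sing X$ and a piece near $b\Omega$, treat the singular piece with the two conditions of Theorem \ref{thm:precompact} (the weight blow-up for condition (ii), Lemma \ref{lem:i} for condition (i)), and treat the boundary piece with Kohn's $1/2$-estimate plus Rellich; the $q=1$ correction by a projection onto holomorphic $p$-forms is also the paper's mechanism. Two points of divergence are worth flagging. First, the paper runs the boundary analysis on the resolution of singularities $\pi:M\to X$: the forms $\pi^*((1-\chi)Sf)$ live on the genuine smoothly bounded strongly pseudoconvex domain $\Omega'=\pi^{-1}(\Omega)\subset\subset M$, so Kohn's basic estimate applies verbatim. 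Your direct application of the subelliptic estimate to forms supported in a collar of $b\Omega$ inside the incomplete manifold $\Omega^*$ is defensible (the estimate is local at the strongly pseudoconvex boundary and the forms vanish near the punctures), but it needs a sentence choosing an auxiliary smoothly bounded domain in $\Reg X$ containing the collar; the resolution is the clean way to produce one. Second, and more substantively, your $q=1$ step is only a plan: the paper takes $P_0(f)=(\pi|_{\Omega'-E}^{-1})^*\circ\Pi_0\circ\pi^*\bigl((1-\chi)Sf\bigr)$ with $\Pi_0$ the Bergman projection on $\Omega'$, and the point you would still have to supply is twofold: (a) $P_0f$ must be holomorphic on \emph{all} of $\Omega^*$ (not merely near $b\Omega$), which is why the projection is taken on the resolved domain $\Omega'$ and pulled back, and (b) the uniform $W^{1/2}$-bound for the corrected boundary piece is Kohn's estimate for $(p,0)$-forms orthogonal to $\ker\dq$, i.e.\ for $\pi^*v-\Pi_0(\pi^*v)$. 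Also, your aside that interior ellipticity fails on $(p,0)$-forms is not quite right --- for compactly supported $(p,0)$-forms one has $\|u\|_{W^{1,2}}^2\lesssim\|u\|^2+\|\dq u\|^2$, so Lemma \ref{lem:i} still handles the singular piece when $q=1$; the holomorphic forms obstruct only the boundary estimate, which is exactly where $P_0$ intervenes.
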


\begin{proof}
We will only treat the case that $\Omega$ is Stein with smooth strongly pseudoconvex boundary.
The compact case follows by the same arguments but is much easier because there is no boundary to consider.
Let
$$\mathcal{L} = \{f\in H: \|f\|_{L^2_{p,q}(\Omega^*)} <1\}.$$
We will show that $S(\mathcal{L})$ is precompact in $L^2_{p,q-1}(\Omega^*)$ if $q\geq 2$.
To do this, we have to treat the singular set $A$ and the strongly 
pseudoconvex boundary $b\Omega$ separately.
So let $\chi\in C^\infty_{cpt}(\Omega)$, $0\leq \chi\leq1$, be a smooth
cut-off function with compact support in $\Omega$ such that $\chi\equiv 1$
in a neighborhood of the singular set $A$.
Let us first show that
$$\mathcal{L}_1 := \{\chi S(f): f\in \mathcal{L}\}$$
is precompact in $L^2_{p,q-1}(\Omega^*)$ by use of the criterion Theorem \ref{thm:precompact} with $X=\Omega^*$.

Since $S$ is bounded as an operator to $L^2_{p,q-1}(\Omega^*,\varphi)$, there exists a constant $C_S>0$ such that
\begin{eqnarray*}
\|u\|_{L^2_{p,q-1}(\Omega^*,\varphi)} \leq C_S
\end{eqnarray*}
for all $u\in S(\mathcal{L})$.

Let $K=\supp\chi$, $K^* = K - A$.
Now then, let $\epsilon>0$.
Choose $\Omega_\epsilon \subset\subset \Omega^*$ such that
$$e^{-\varphi} \geq 1/\epsilon^2$$ 
on $K^*-\Omega_\epsilon$.
This is possible because $K^* -\Omega_\epsilon$ is a neighborhood of $A$ if $\Omega_\epsilon$ is big enough
and $e^{-\varphi(z)} \rightarrow +\infty$ as $z$ approaches the singular set $A$.
Then
\begin{eqnarray*}
\epsilon^{-2} \int_{\Omega^*-\Omega_\epsilon} |\chi u|^2 dV_X &=&  \epsilon^{-2} \int_{K^*-\Omega_\epsilon} |\chi u|^2 dV_X\\
&\leq&  \int_{K^*-\Omega_\epsilon} |\chi u|^2 e^{-\varphi} dV_X\\
&\leq& \int_{\Omega^*} |u|^2 e^{-\varphi} dV_X \leq C_S^2
\end{eqnarray*}
for all $u\in S(\mathcal{L})$. Hence
$$\|v\|_{L^2_{p,q-1}(\Omega^* - \Omega_\epsilon)} \leq \epsilon C_S$$
for all $v\in \mathcal{L}_1$.
That proves the second condition in Theorem \ref{thm:precompact},
it remains to show the first condition.
We can use Lemma \ref{lem:i} with $X=\Omega^*$ and
\begin{eqnarray*}
&T=T_1:& V_1=L^2_{p,q-2}(\Omega^*) \rightarrow V_2=L^2_{p,q-1}(\Omega^*,\varphi),\\
&T=T_2:& V_2=L^2_{p,q-1}(\Omega^*,\varphi) \rightarrow V_3=L^2_{p,q}(\Omega^*),
\end{eqnarray*}
because we can use different weight functions for forms of different degree in all our considerations above.

For $u\in S(\mathcal{L})$, we have
$$\|u\|^2_{L^2_{p,q-1}(\Omega^*,\varphi)} + \|T_2 u\|^2_{L^2_{p,q}(\Omega^*)} < C_S^2 + 1$$
and $T_1^* u=0$. Since $\chi$ is constant outside a compact subset of $\Omega^*$,
there exists a constant $C_\chi>0$ such that
\begin{eqnarray}\label{eq:bound1}
\|\chi u\|^2_{L^2_{p,q-1}(\Omega^*,\varphi)} + \|T_2 (\chi u)\|^2_{L^2_{p,q}(\Omega^*)} + \|T_1^* (\chi u)\|^2_{L^2_{p,q-2}(\Omega^*)} < C_\chi (C_S^2 +1).
\end{eqnarray}
So, we can use Lemma \ref{lem:i} with $k=C_\chi (C_S^2 +1)$ yielding $\mathcal{L}_1\subset \mathcal{K}$.
Hence, $\mathcal{L}_1$ satisfies also the first condition in Theorem \ref{thm:precompact}.\\

The second step is to show that
$$\mathcal{L}_2 = \{(1-\chi) S(f): f\in\mathcal{L}\}$$
is precompact in $L^2_{p,q-1}(\Omega^*)$. But this follows from well-known results since
$\Omega$ has smooth strongly pseudoconvex boundary and $(1-\chi)$ has support away from the singular set $A$.

Let $V$ be an open neighborhood of $A$ in $\Omega$ such that
$$N=\o{V} \subset\subset \{z\in \Omega: \chi(z)=1\} \subset\subset \Omega,$$ 
and let
\begin{eqnarray*}
\pi: M \rightarrow X
\end{eqnarray*}
be a resolution of singularities as in the proof of Theorem \ref{thm:fov2}.
Set $\Omega'= \pi^{-1}(\Omega)$ and $N'=\pi^{-1}(N)$.
Again, let
\begin{eqnarray*}
\gamma:= \pi^* h
\end{eqnarray*}
be the pullback of the Hermitian metric $h$ of $X$ to $M$ which
is positive semidefinite with degeneracy locus $E$.

As above, give $M$ the structure of a Hermitian manifold with a freely chosen (positive definite)
metric $\sigma$. Then
$$\gamma \lesssim \sigma$$
on a neighborhood of $\o{\Omega'}$ and
$$\gamma \sim \sigma$$
on $\Omega'-N'$ since the degeneracy locus $E$ of $\gamma$ is compactly contained in $\pi^{-1}(V)$.

Recall that
there exists a continuous function $g\in C^0(M,\R)$ such that
$$dV_\gamma = g^2 dV_\sigma.$$
This yields $|g| |\omega|_\gamma  = |\omega|_\sigma$
if $\omega$ is an $(n,0)$-form, and
\begin{eqnarray*}
|g| |\omega|_\gamma \leq |\omega|_\sigma
\end{eqnarray*}
if $\omega$ is a $(p,0)$-form, $0\leq p\leq n$. So, for a $(p,0)$ form $\omega$ on $M$:
\begin{eqnarray}\label{eq:l2est}
\int |\omega|_\gamma^2 dV_\gamma \leq \int g^{-2} |\omega|_\sigma^2 g^2 dV_\sigma = \int |\omega|^2_\sigma dV_\sigma.
\end{eqnarray}

\newpage
We can now show that $\mathcal{L}_2$ is precompact by use of the resolution $\pi: M\rightarrow X$
and well-known results about strictly pseudoconvex manifolds.

Since $\gamma=\pi^*h \sim\sigma$ on $\Omega'-N'$, we have
$$L^{2}_{p,q-1}(\Omega-N) \cong L^{2,\sigma}_{p,q-1}(\Omega'-N').$$
But the forms in $\mathcal{L}_2$ have support in $\o{\Omega}-N$.
So, it is enough to show that $\pi^*\mathcal{L}_2$ is precompact in $L^{2,\sigma}_{p,q-1}(\Omega')$.

As in \eqref{eq:bound1}, there exists a constant $C_\chi'>0$ such that
\begin{eqnarray*}
\|v\|^2_{L^2_{p,q-1}(\Omega^*,\varphi)} + \|T_2 v\|^2_{L^2_{p,q}(\Omega^*)} + \|T_1^* v\|^2_{L^2_{p,q-2}(\Omega^*)} < C_\chi'
\end{eqnarray*}
for all $v\in \mathcal{L}_2$. We can ignore the weight $\varphi$ since the forms in $\mathcal{L}_2$
have support away from the singular set $A$ and get a constant $C_\chi''>0$ such that
\begin{eqnarray*}
\|v\|^2_{L^2_{p,q-1}(\Omega^*)} + \|\dq_w v\|^2_{L^2_{p,q}(\Omega^*)} + \|\dq_w^* v\|^2_{L^2_{p,q-2}(\Omega^*)} < C_\chi''
\end{eqnarray*}
for all $v\in \mathcal{L}_2$. For the same reason we obtain
\begin{eqnarray}\label{eq:gnorm}
\|\pi^* v\|^2_{L^{2,\sigma}_{p,q-1}(\Omega')} + \|\dq_w \pi^* v\|^2_{L^{2,\sigma}_{p,q}(\Omega')} + \|\dq_w^* \pi^* v\|^2_{L^{2,\sigma}_{p,q-2}(\Omega')} < C_\chi'''.
\end{eqnarray}
Since $\Omega'$ is a relatively compact subset of $M$ with a smooth strongly pseudoconvex boundary,
Kohn's basic estimate yields
$$\|\pi^* v\|^2_{W^{1/2,2,\sigma}_{p,q-1}(\Omega')} < C_1$$
for all $v\in\mathcal{L}_2$ with some constant $C_1>0$ if $q\geq 2$.
In this setting, the embedding
$$W^{1/2,2,\sigma}_{p,q-1}(\Omega') \hookrightarrow L^{2,\sigma}_{p,q-1}(\Omega')$$
is compact by the Sobolev embedding theorem, and this shows that $\pi^*\mathcal{L}_2$
is a precompact subset of $L^2_{p,q-1}(\Omega')$ if $q\geq2$.\\

It remains to consider the case $q=1$.
Let 
$$\Pi_0: L^{2,\sigma}_{p,0}(\Omega') \rightarrow \ker \dq_w \subset L^{2,\sigma}_{p,0}(\Omega')$$
be the Bergman projection (the orthogonal projection onto $\ker\dq_w$).

We can now define the operator
$$P_0: H \rightarrow \ker \dq_w \subset L^2_{p,0}(\Omega^*)$$
as
$$P_0(f) := (\pi|_{\Omega'-E}^{-1})^* \circ \Pi_0 \circ \pi^* \big( (1-\chi) S(f)\big).$$
Since $\pi: \Omega'-E \rightarrow \Omega^*$ is biholomorphic,
it is clear that $\dq_w P_0(f)=0$,
so that $S-P_0$ remains a solution operator for the $\dq_w$-equation.

Since $(1-\chi)\equiv 0$ on $N$, it is clear that
$$f \mapsto \Pi_0 \circ \pi^* \big( (1-\chi) S(f)\big)$$
is a bounded map $H\rightarrow \ker\dq_w \subset L^{2,\sigma}_{p,0}(\Omega')$.

\newpage
On the other hand, \eqref{eq:l2est} yields (because $E$ is thin):
\begin{eqnarray*}
\|(\pi|_{\Omega'-E}^{-1})^* \omega\|_{L^2_{p,0}(\Omega^*)} = \|\omega\|_{L^{2,\gamma}_{p,0}(\Omega')} \leq \|\omega\|_{L^{2,\sigma}_{p,0}(\Omega')}
\end{eqnarray*}
Hence
\begin{eqnarray}\label{eq:inclusion}
(\pi|_{\Omega'-E}^{-1})^*: L^{2,\sigma}_{p,0}(\Omega') \rightarrow L^2_{p,0}(\Omega^*)
\end{eqnarray}
is bounded, and we see that $P_0$ is a bounded linear map.

It is now easy to see by Kohn's basic estimates that $(1-\chi)S-P_0$ is compact.
Because of \eqref{eq:inclusion}, it is enough to show that
\begin{eqnarray}\label{eq:toshow}
\pi^* \mathcal{L}_2 - \Pi_0 \big( \pi^* \mathcal{L}_2\big)
\end{eqnarray}
is precompact in $L^{2,\sigma}_{p,0}(\Omega')$.
But \eqref{eq:gnorm} implies that there exists a constant $C_2>0$ such that
$$\|\pi^* v - \Pi_0(\pi^* v)\|_{W^{1/2,2,\sigma}_{p,0}(\Omega')} < C_2$$
for all $v\in\mathcal{L}_2$ since $\Omega'$ is a domain with smooth strongly pseudoconvex boundary.
Hence, \eqref{eq:toshow} follows by the Sobolev embedding theorem.
\end{proof}

\vspace{3mm}
\subsection{Compactness of the $\dq_w$-Neumann operator}\label{sec:cpt}

We can now study the $\dq_w$-Neumann operator on spaces with isolated singularities.
Let $X, \Omega, A$ as in Theorem \ref{thm:fov2}.
Then
$$\dq_{cpt}: C^\infty_*(\Omega^*) \rightarrow C^\infty_*(\Omega^*)$$
is a densely defined operator on $L^2_*(\Omega^*)$.
In this section, we study the maximal closed extension,
i.e. the $\dq$-operator in the sense of distributions, which we denote by $\dq_w$.
Let
$$\Box = \dq_w \dq_w^* + \dq_w^*\dq_w.$$
By Theorem \ref{thm:self-adjoint}, $\Box$ is a densely defined closed self-adjoint operator with
$$(\Box u,u)_{L^2} \geq 0.$$
By Corollary \ref{cor:fov2}, the $\dq$-operator in the sense of distributions 
has closed range in $L^2_{p,q}(\Omega^*)$ if $p+q\neq n$.
If $\Omega=X$ is compact, we have to assume in addition that $q=1$ if $p+q=n+1$.
So, if $p+q\neq n-1, n$ (and $q=1$ if $p+q=n+1$ and $\Omega$ is compact), then
$$\Box_{p,q} = \Box|_{L^2_{p,q}}: L^2_{p,q}(\Omega^*) \rightarrow L^2_{p,q}(\Omega^*)$$
has closed range and we have the orthogonal decomposition
$$L^2_{p,q}(\Omega^*) = \ker \Box_{p,q} \oplus \Ra(\Box_{p,q})$$
by Lemma \ref{lem:Q}. Hence, the associated Green operator
$$N_{p,q} = \Box_{p,q}^{-1}: L^2_{p,q}(\Omega^*) \rightarrow \Dom(\Box) \subset L^2_{p,q}(\Omega^*)$$
is well-defined as in \eqref{eq:Green}. $N_{p,q}$ is called the $\dq_w$-Neumann operator.\\

We will now observe that $N_{p,q}$ is a compact operator (if $p+q\neq n-1,n$).
This is the case exactly if the equivalent conditions of Theorem \ref{thm:compact} are satisfied.
However, we do not use Theorem \ref{thm:compact} to verify compactness,
but a classical argument due to Hefer and Lieb relying on the existence of compact solution
operators (see \cite{HL}).

\begin{thm}\label{thm:neumanncpt}
Let $X$ be a Hermitian complex space of pure dimension $n$ with only isolated singularities,
and $\Omega\subset\subset X$ a relatively compact open subset 
such that either $\Omega=X$ is compact, or $X$ is Stein and
$\Omega$ has smooth strongly pseudoconvex boundary
that does not contain singularities.

Let $p+q\neq n-1, n$ and $q\geq 1$. If $\Omega=X$ is compact and $p+q=n+1$, let $q=1$.
Then the $\dq$-operator in the sense of distributions $\dq_w$
has closed range in $L^2_{p,q}(\Omega^*)$ and $L^2_{p,q+1}(\Omega^*)$
so that the $\dq_w$-Neumann operator
$$N_{p,q} = \Box_{p,q}^{-1}=(\dq_w\dq_w^*+\dq_w^*\dq_w)_{p,q}^{-1}: L^2_{p,q}(\Omega^*) \rightarrow \Dom\Box_{p,q}\subset L^2_{p,q}(\Omega^*)$$
is well-defined as in \eqref{eq:Green}. $N_{p,q}$ is compact.
\end{thm}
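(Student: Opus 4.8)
The plan is to express the $\dq_w$-Neumann operator $N_{p,q}$ through bounded solution operators for the $\dq_w$-equation and then to deduce compactness from Theorem \ref{thm:sop}, following the classical argument of Hefer and Lieb \cite{HL}. First I recall the set-up. Under the hypothesis $p+q\neq n-1,n$ (and $q=1$ if $\Omega=X$ is compact and $p+q=n+1$), Corollary \ref{cor:fov2} gives that $\dq_w$ has closed range both as an operator $L^2_{p,q-1}(\Omega^*)\to L^2_{p,q}(\Omega^*)$ and as an operator $L^2_{p,q}(\Omega^*)\to L^2_{p,q+1}(\Omega^*)$. Hence Lemma \ref{lem:Q}, applied with $V_1=L^2_{p,q-1}(\Omega^*)$, $V_2=L^2_{p,q}(\Omega^*)$ and $V_3=L^2_{p,q+1}(\Omega^*)$, shows that $\Box_{p,q}$ has closed range, and we get the orthogonal decomposition
$$L^2_{p,q}(\Omega^*)=\ker\Box_{p,q}\oplus R_1\oplus R_2,\qquad R_1:=\Ra(\dq_w)_{p,q},\quad R_2:=\Ra(\dq_w^*)_{p,q},$$
with $\Ra(\Box_{p,q})=R_1\oplus R_2$, so that $N_{p,q}$ is well-defined as the Green operator \eqref{eq:Green}. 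Let $P_1,P_2$ denote the orthogonal projections of $L^2_{p,q}(\Omega^*)$ onto the closed subspaces $R_1,R_2$.

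The central step I would carry out is the representation formula
$$N_{p,q}=S_q^*S_q\,P_1+S_{q+1}S_{q+1}^*\,P_2,$$
where $S_q:R_1\to L^2_{p,q-1}(\Omega^*)$ and $S_{q+1}:\Ra(\dq_w)_{p,q+1}\to L^2_{p,q}(\Omega^*)$ are the canonical (minimal norm) $\dq_w$-solution operators, $\dq_wS_q=\id$ and $\dq_wS_{q+1}=\id$, with ranges orthogonal to the respective kernels of $\dq_w$, extended by zero to the orthogonal complements of their domains. I would prove this by a direct computation with the decomposition above: for $f\in\Ra(\Box_{p,q})$ set $u:=N_{p,q}f=a+b$ with $a\in R_1$, $b\in R_2$. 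Since $R_1\subset\ker\dq_w$ and $R_2\subset\ker\dq_w^*$, one has $\dq_wu=\dq_wb$ and $\dq_w^*u=\dq_w^*a$, and comparing the $R_1$- and $R_2$-components of $\Box_{p,q}u=\dq_w^*\dq_wb+\dq_w\dq_w^*a=f$ yields $\dq_w\dq_w^*a=P_1f$ and $\dq_w^*\dq_wb=P_2f$. Because $\dq_w^*a\perp\ker\dq_w$ and $\dq_wb\perp\ker\dq_w^*$, the forms $\dq_w^*a$ and $\dq_wb$ are the canonical solutions of $\dq_wv=P_1f$ and of $\dq_w^*w=P_2f$, hence $\dq_w^*a=S_qP_1f$ and $\dq_wb=S_{q+1}^*P_2f$ (the latter after checking that the Hilbert space adjoint $S_{q+1}^*$ coincides with the canonical solution operator for $\dq_w^*$ in degree $(p,q)$). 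Testing $a\in R_1$ against forms $\dq_w\psi$ with $\psi\perp\ker\dq_w$, and $b\in R_2$ against forms $\dq_w^*\xi$ with $\xi\perp\ker\dq_w^*$, and using $\psi=S_q(\dq_w\psi)$ and $\xi=S_{q+1}^*(\dq_w^*\xi)$, one recovers $a=S_q^*(\dq_w^*a)=S_q^*S_qP_1f$ and $b=S_{q+1}(\dq_wb)=S_{q+1}S_{q+1}^*P_2f$, which is the asserted formula.

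Granting the formula, $N_{p,q}$ is compact as soon as $S_q$ and $S_{q+1}$ are compact, since then $S_q^*S_q$ and $S_{q+1}S_{q+1}^*$ are compact and $P_1,P_2$ are bounded. Moreover, precomposing any bounded $\dq_w$-solution operator with the bounded orthogonal projection onto $(\ker\dq_w)^\perp$ yields the canonical one, so it is enough to exhibit some bounded compact $\dq_w$-solution operator in each of the degrees $(p,q)$ and $(p,q+1)$. In degree $(p,q+1)$ we have $q+1\geq2$, and $p+(q+1)\neq n$ because $p+q\neq n-1$, and (in the compact case) $p+(q+1)\neq n+1$ because $p+q\neq n$; thus Theorem \ref{thm:sop} supplies a compact solution operator on a subspace $H$ of finite codimension in $(\ker\dq_w)_{p,q+1}$, and by Theorem \ref{thm:fov2} $H\subset\Ra(\dq_w)_{p,q+1}$, which is closed, so adding an arbitrary bounded (finite rank, hence compact) solution operator on a finite-dimensional complement of $H$ extends it to a compact bounded solution operator on all of $\Ra(\dq_w)_{p,q+1}$. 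In degree $(p,q)$ we have $p+q\neq n$ and $q\geq1$; if $q\geq2$ Theorem \ref{thm:sop} again gives a compact solution operator, and if $q=1$ it gives one of the form $S-P_0$ with $P_0$ bounded and $\dq_wP_0=0$, hence still a compact bounded $\dq_w$-solution operator, which is extended to $R_1=\Ra(\dq_w)_{p,q}$ in the same way. This gives the compactness of $N_{p,q}$, and at the same time proves Theorem \ref{thm:cptsolution}, since the construction produces a compact $S$ with $\dq_wSf=f$ on $\Ra(\dq_w)_{p,q}$, which has finite codimension in $(\ker\dq_w)_{p,q}$. The genuine work is already contained in Theorem \ref{thm:sop} (which rests on the Forn{\ae}ss, {\O}vrelid and Vassiliadou estimates and on Kohn's subelliptic estimates); the remaining difficulty — and the main point to be careful about — is the bookkeeping in the representation formula, namely keeping track of the domains of $\dq_w$ and $\dq_w^*$ and of the distinction between canonical and non-canonical solution operators, together with the upgrading of the solution operators from Theorem \ref{thm:sop}, a priori defined only on finite-codimension subspaces and perturbed by $P_0$ when $q=1$, to genuine bounded compact $\dq_w$-solution operators on the full closed range.
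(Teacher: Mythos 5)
Your proposal is correct and follows essentially the same route as the paper: closed range via Corollary \ref{cor:fov2} and Lemma \ref{lem:Q}, compact solution operators from Theorem \ref{thm:sop} extended by finite-rank pieces from the finite-codimension subspaces $H$ to all of $\Ra(\dq_w)$, and the Hefer--Lieb representation of $N_{p,q}$ in terms of these operators (your formula $N_{p,q}=S_q^*S_qP_1+S_{q+1}S_{q+1}^*P_2$ is exactly the identity $N_{p,q}=K_q^*K_q+K_{q+1}K_{q+1}^*$ that the paper quotes from \cite{HL} and sketches). The only nitpick is terminological: the projection onto $(\ker\dq_w)^\perp$ is post-composed (applied to the output of the solution operator), not pre-composed.
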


\begin{proof}
Only compactness remains to show.
By Theorem \ref{thm:fov2} and Theorem \ref{thm:sop},
there exist closed subspaces of finite codimension
\begin{eqnarray*}
H_q &\subset& \ker\dq_w \subset L^2_{p,q}(\Omega^*),\\
H_{q+1} &\subset& \ker\dq_w\subset L^2_{p,q+1}(\Omega^*),
\end{eqnarray*}
and compact linear operators
\begin{eqnarray*}
S_q: H_q &\rightarrow& \Dom\dq_w \subset L^2_{p,q-1}(\Omega^*),\\
S_{q+1}: H_{q+1} &\rightarrow& \Dom \dq_w\subset L^2_{p,q}(\Omega^*),
\end{eqnarray*}
such that $\dq_w S_q u=u$ and $\dq_w S_{q+1} u=u$.

Now then $N_{p,q}$ is compact by \cite{HL}, Theorem 3.1.
We may outline the short and elegant proof for convenience of the reader.
Let $U_q$ and $U_{q+1}$ be the orthogonal complements of $H_q$ and $H_{q+1}$
in $\Ra(\dq_w)$ in $L^2_{p,q}$ and $L^2_{p,q+1}$ with basis $f_{1,q}, ..., f_{r_q,q}$
and $f_{1,q+1}, ..., f_{r_{q+1},q+1}$, respectively.
Choose $u_{j,k}$ with $\dq_w u_{j,k} = f_{j,k}$ and define the operators $T_q$ and $T_{q+1}$
on $\Ra(\dq_w)$ in $L^2_{p,q}$ and $L^2_{p,q+1}$ by
$$T_k(f) = T_k\left( \sum_{j=1}^{r_k} \alpha_j f_{j,k} + g\right) := \sum_{j=1}^{r_k} \alpha_j u_{j,k} + S_q(g)\ \mbox{ for } g\in H_k.$$
Then $T_k$, $k=q, q+1$, are compact linear solution operators for the $\dq_w$-operator on $\Ra(\dq_w)$.
Extend these operators to be zero on $\Ra(\dq_w)^\perp$.

For $k\in\{q,q+1\}$, let $P_k: L^2_{p,k-1}(\Omega^*) \rightarrow (\ker \dq_w)^\perp$ and $Q_k: L^2_{p,k}(\Omega^*) \rightarrow \Ra(\dq_w)$
be the orthogonal projections on these closed subspaces, and define
$$K_k := P_k T_k Q_k\ \ \mbox{ for } k=q,q+1.$$
Hefer and Lieb show that
$N_{p,q} = K_q^* K_q + K_{q+1} K_{q+1}^*$,
and that yields compactness of $N_{p,q}$ by compactness of $T_q, T_{q+1}$.
\end{proof}

\newpage
\subsection{Compactness of the $\dq_s$-Neumann operator}\label{sec:ds}

Another important operator is the minimal closed extension of
$\dq_{cpt}: C^\infty_*(\Omega^*) \rightarrow C^\infty_*(\Omega^*)$,
i.e. the closure of the graph in $L^2_*(\Omega^*)\times L^2_*(\Omega^*)$, which we denote by $\dq_s$.
A form $f\in L^2_{p,q}(\Omega^*)$ is in the domain of $\dq_s$ iff it is in the domain of $\dq_w$
and there exist a sequence $\{f_j\} \subset C^\infty_{p,q}(\Omega^*)$ such that $f_j\rightarrow f$ in $L^2_{p,q}(\Omega^*)$
and $\dq f_j \rightarrow \dq_w f$ in $L^2_{p,q+1}(\Omega^*)$.
The $\dq_s$-operator is dual to the $\dq_w$-operator in a sense we will elaborate now.
Note that
$$\dq_s = \dq_{cpt}^{**}$$
since it is the closure of the graph.
Let $*$ be the Hodge-$*$-operator on $\Omega^*$ (mapping $(p,q)$ to $(n-q,n-p)$-forms). Then
$$\theta_{cpt}= - \o{*} \dq_{cpt} \o{*}$$
is the formal adjoint of the $\dq$-operator (acting on smooth forms with compact support).
By definition,
$$\dq_w = \theta_{cpt}^*.$$
We also obtain the $\theta$-operator in the sense of distributions
$$\theta_w = \dq_{cpt}^* = -\o{*}\dq_w \o{*}$$
and the minimal closed extension
$$\theta_s = \theta_{cpt}^{**} = -\o{*} \dq_s \o{*}.$$

Thus, we obtain the duality relations
$$\dq_w^* = \theta_{cpt}^{**} = \theta_s = -\o{*}\dq_s \o{*}$$
and
$$\dq_s^* = \dq_{cpt}^{*} = \theta_w = -\o{*} \dq_w \o{*}.$$
Hence the $\dq_s$-Laplacian is related to the $\dq_w$-Laplacian as
$$\Box^s = \dq_s\dq_s^* + \dq_s^*\dq_s = \o{*} \Box \o{*},$$
and the $\dq_s$-Neumann operator $N^s$
is well-defined on $(n-p,n-q)$-forms exactly if the $\dq_w$-Neumann operator is well-defined on $(p,q)$-forms,
and in that case:
\begin{eqnarray*}
N^s_{n-p.n-q} = (\Box_{n-p,n-q}^s)^{-1} = \o{*}\Box_{p,q}^{-1} \o{*}= \o{*} N_{p,q} \o{*}.
\end{eqnarray*}

So, a direct consequence of Theorem \ref{thm:neumanncpt} is:

\begin{thm}\label{thm:neumanncpt2}
Let $X$, $\Omega$, $p$ and $q$ as in Theorem \ref{thm:neumanncpt}, and $a=n-p$, $b=n-q$.

Then the minimal closed extension $\dq_s$ of the $\dq$-operator
has closed range in $L^2_{a,b}(\Omega^*)$ and $L^2_{a,b+1}(\Omega^*)$
so that the $\dq_s$-Neumann operator
$$N^s_{a,b} = (\Box^s_{a,b})^{-1}=(\dq_s\dq_s^*+\dq_s^*\dq_s)_{a,b}^{-1}: L^2_{a,b}(\Omega^*) \rightarrow \Dom\Box_{a,b}\subset L^2_{a,b}(\Omega^*)$$
is well-defined as in \eqref{eq:Green}. $N^s_{a,b}$ is compact.
\end{thm}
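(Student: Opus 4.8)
The plan is to obtain Theorem~\ref{thm:neumanncpt2} as a direct consequence of Theorem~\ref{thm:neumanncpt}, transporting every assertion under the conjugate-linear Hodge star operator $\o{*}$ exactly as prepared in the discussion preceding the statement. First I would record the two facts that make the transport work: $\o{*}$ is a pointwise, hence $L^2$-isometric, conjugate-linear isomorphism $\o{*}\colon L^2_{p,q}(\Omega^*)\to L^2_{n-p,n-q}(\Omega^*)$ with $\o{*}\,\o{*}=\pm\id$; and, by the duality relations derived above, $\dq_s^*=-\o{*}\dq_w\o{*}$, $\dq_w^*=-\o{*}\dq_s\o{*}$, and therefore $\Box^s_{a,b}=\o{*}\,\Box_{p,q}\,\o{*}$ with $a=n-p$, $b=n-q$.

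The first thing to check is closed range of $\dq_s$ in $L^2_{a,b}(\Omega^*)$ and $L^2_{a,b+1}(\Omega^*)$, which is what makes $N^s_{a,b}=(\Box^s_{a,b})^{-1}$ well-defined as in \eqref{eq:Green}. Since an operator has closed range precisely when its adjoint does, it suffices to consider $\dq_s^*$; and $\dq_s^*=-\o{*}\dq_w\o{*}$ intertwines, via the isometric isomorphism $\o{*}$, the operator $\dq_s^*$ on $(a,b)$-forms with $\dq_w\colon L^2_{p,q}(\Omega^*)\to L^2_{p,q+1}(\Omega^*)$ and $\dq_s^*$ on $(a,b+1)$-forms with $\dq_w\colon L^2_{p,q-1}(\Omega^*)\to L^2_{p,q}(\Omega^*)$. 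Both of these have closed range by Theorem~\ref{thm:neumanncpt}, so $\dq_s$ has closed range in the required two degrees and $\Box^s_{a,b}$ has closed range by Lemma~\ref{lem:Q}; equivalently, one may simply invoke the already-noted fact that $N^s$ is well-defined on $(n-p,n-q)$-forms exactly when $N_{p,q}$ is well-defined on $(p,q)$-forms.

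It then remains only to transfer compactness. From $\Box^s_{a,b}=\o{*}\,\Box_{p,q}\,\o{*}$ and $\o{*}\,\o{*}=\pm\id$ one reads off, on $\ker\Box^s_{a,b}=\o{*}(\ker\Box_{p,q})$ and on $\Ra(\Box^s_{a,b})=\o{*}(\Ra(\Box_{p,q}))$, that $N^s_{a,b}=\o{*}\,N_{p,q}\,\o{*}$ (this identity is stated above). A composition of two conjugate-linear maps is linear, so $\o{*}\,N_{p,q}\,\o{*}$ is a bona fide linear operator; and since $\o{*}$ is an $L^2$-isometry and $N_{p,q}$ is compact by Theorem~\ref{thm:neumanncpt}, the composition carries bounded sets to precompact sets, whence $N^s_{a,b}$ is compact. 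I do not anticipate a real obstacle: the argument is pure bookkeeping with the Hodge star, and the only points deserving a moment's care are that the closed-range transfer uses that $\o{*}$ is an isometric \emph{isomorphism} (not merely bounded), and that compactness is preserved under pre- and post-composition with the conjugate-linear isometry $\o{*}$.
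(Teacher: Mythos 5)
Your proposal is correct and follows exactly the paper's route: the paper derives the duality relations $\dq_s^*=-\o{*}\dq_w\o{*}$, $\Box^s=\o{*}\Box\o{*}$ and $N^s_{n-p,n-q}=\o{*}N_{p,q}\o{*}$ in section \ref{sec:ds} and then states the theorem as a direct consequence of Theorem \ref{thm:neumanncpt}. You merely spell out the (routine) verification that closed range and compactness transfer under the conjugate-linear isometry $\o{*}$, which the paper leaves implicit.
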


\newpage

\end{document}